

\documentclass[11pt]{article} 

\usepackage[utf8]{inputenc} 
\usepackage[T1]{fontenc}
\usepackage{lmodern}


\usepackage{geometry} 
\geometry{a4paper} 

\usepackage{microtype}

\usepackage{graphicx} 


\usepackage{booktabs} 
\usepackage{array} 
\usepackage{paralist} 
\usepackage{verbatim} 
\usepackage{subfig} 
\usepackage{lastpage} 
\usepackage{extramarks} 
\usepackage{graphicx} 
\usepackage{lipsum} 
\usepackage{xcolor}
\usepackage{empheq}
\usepackage{mathtools}
\usepackage{amssymb}
\usepackage{hyperref}
\usepackage{amsthm}
\usepackage{cleveref}
\usepackage{stmaryrd}
\usepackage{calc}
\usepackage{mdframed}
\usepackage{environ}
\usepackage{xfrac}

\usepackage[backend=biber]{biblatex}
\addbibresource{references.bib}

\newtheorem{theorem}{Theorem}

\newtheorem{remark}{Remark}
	\newtheorem{prop}{Proposition}

\DeclarePairedDelimiter{\norm}{\lVert}{\rVert}
\DeclarePairedDelimiter{\abs}{\lvert}{\rvert}
\DeclarePairedDelimiter{\dbracket}{\llbracket}{\rrbracket}

\newcommand{\NN}{\mathbb{N}}
\newcommand{\RR}{\mathbb{R}}
\newcommand{\EE}{\mathbb{E}}

\newcommand{\PP}{\mathbb{P}}

\newcommand{\calX}{\mathcal{X}}
\newcommand{\calY}{\mathcal{Y}}

\newcommand{\calL}{\mathcal{L}}
\newcommand{\calS}{\mathcal{S}}
\newcommand{\calN}{\mathcal{N}}
\newcommand{\calR}{\mathcal{R}}
\newcommand{\calF}{\mathcal{F}}
\newcommand{\calV}{\mathcal{V}}
\newcommand{\calU}{\mathcal{U}}
\newcommand{\calM}{\mathcal{M}}
\newcommand{\calE}{\mathcal{E}}

\newcommand{\eqdef}{\coloneqq}

\DeclareMathOperator{\Var}{Var}
\DeclareMathOperator{\Cov}{Cov}
\DeclareMathOperator*{\argmin}{arg\,min}


\newcommand\numberthis{\addtocounter{equation}{1}\tag{\theequation}}
\newmdenv[topline=false,rightline=false,bottomline=false,nobreak=false]{proofaside}
\newcommand\oversetclap[2]{\overset{\mathclap{#1}}{#2}}

\NewEnviron{meqn}{%
    \begin{align*}
    \BODY
    \numberthis
    \end{align*}
}

\usepackage{fancyhdr} 
\pagestyle{fancy} 
\lhead{}\chead{}\rhead{}
\lfoot{}\cfoot{\thepage}\rfoot{}

\usepackage{sectsty}
\allsectionsfont{\sffamily\mdseries\upshape} 

\usepackage[nottoc,notlof,notlot]{tocbibind} 
\usepackage[titles,subfigure]{tocloft} 




\title{Asymptotics of Cross-Validation}
\author{Morgane Austern\\{\small Microsoft Research} \and Wenda Zhou\\ {\small Columbia University}}
\date{} 

\begin{document}
\maketitle

\begin{abstract}
    \noindent Cross validation is a central tool in evaluating the performance of machine learning and statistical models.
    However, despite its ubiquitous role, its theoretical properties are still not well understood.
    We study the asymptotic properties of the cross validated-risk for a large class of models.
    Under stability conditions, we establish a central limit theorem and Berry-Esseen bounds, which enable us to compute asymptotically accurate confidence intervals.
    Using our results, we paint a big picture for the statistical speed-up of  cross validation compared to a train-test split procedure.
    A corollary of our results is that parametric M-estimators (or empirical risk minimizers) benefit from the ``full'' speed-up when performing cross-validation under the training loss.
    In other common cases, such as when the training is performed using a surrogate loss
    or a regularizer, we show
    that the behavior of the cross-validated risk is complex with a variance reduction which may be smaller or larger than the ``full'' speed-up, depending on the model and the underlying distribution.
    We allow the number of folds $K_n$ to grow with the number of observations at any rate. 
\end{abstract}

\newpage
\tableofcontents
\newpage

\section{Introduction}
Let $(X_i)$ be a sequence of independent and identically distributed (i.i.d) observations.
Consider a statistical model that given $n$ data points computes an estimator $f_n(X_1,\dots,X_n)$.
For a loss function $\calL$, our goal is to estimate $\EE \Big[\calL(\tilde{X}, f_n(X_1,\dots,X_n))\Big]$, the expected risk of the estimator on a new observation. 
It is well-known that if we approximate it by the empirical loss $\frac{1}{n}\sum_{i\le n}\mathcal{L}\big(f_n(X_1,\dots,X_n), X_i\big)$, we run the risk of systematically underestimating it \autocite{book_eosl}.
To avoid such issues, the most common strategy is that of data splitting.

In a data splitting procedure, the dataset is separated into a training set, and a testing set.
The model will be trained on the first one and evaluated on the second one.
Formally, if we let $m_n$ denote the size of the training set, we estimate the expected risk by:
\begin{equation}
    \hat{R}_{\text{split}} \eqdef \frac{1}{n - m_n}  \sum_{i = m_n + 1}^n \calL(X_i, f_{m_n}(X_1, \dotsc, X_{m_n})).
\end{equation}
The performance of this estimator depends on two distinct sources of error.
The first one is a bias induced by the smaller size of the training set which implies that $\hat{R}_{\text{split}}$  estimates the expected risk of the estimator $f_{m_n} (X_1,\dots,X_{m_n})$ instead of the original one $f_n$.
The magnitude of this bias depends on the model itself and the size of the training set $m_n$.
The second source of error is statistical and is caused by the randomness of the empirical average $\hat{R}_{\text{split}}$.
The magnitude of this effect decreases as the size of the testing set increases.
Thus, we observe a trade-off: as we increase the size of the training set, we reduce the bias but increase the randomness (and hence the variance) of $\hat{R}_s$.
This trade-off is even more present for high-dimensional models for which the bias may be asymptotically large.

To compensate for this issue, a second method has been proposed: $K$-fold cross validation.
$K$-fold cross-validation proceeds to split the dataset into $K$ blocks $X^n_{B^n_1}, \dotsc, X^n_{B_K}$
of the same size.
The procedure successively omits the $i$th block, training the model on the remaining blocks, and evaluating the risk on the left-out block.
This gives us $K$ different models with $K$ different estimated risks.
The cross validated risk $\hat{R}_{\rm{cv}}$ is the average of those empirical risks.
Once again, there are two sources of error.
The first one is a bias due to the smaller size the training sets, which decreases with the number of folds.
The second one is due to the randomness of the estimator $\hat{R}_{\rm{cv}}$.
How large is the second effect and how much it depends on the model itself and the number of folds is largely an open question. 
This question is made more complicated by the dependence between the folds. 

In this paper we study the asymptotic properties of the cross-validated risk.
More specifically, we study its speed of convergence to the expected risk and compare it to the data splitting estimator $\hat{R}_{\text{split}}$. 
This question is only made more important by the popularity of the cross validation method.
In \cref{sec:main-results}, we prove, under stability conditions, central limit theorems and Berry-Esseen bounds for the cross-validated risk as well as the split risk.
We allow the number of folds $K_n$ to grow with the number of observations $n$ at any rate.
~By obtaining the limiting distribution of both the cross-validated risk and the split risk we can exactly quantify the reduction in variance obtained by the cross-validated procedure.
In some cases -- such as parametric empirical risk minimizers trained and evaluated with the same loss -- we obtain the full speed-up, meaning that the cross-validated risk enjoys a $K_n$-fold reduction in variance compared to its split counterpart.
However, in other cases -- such as when using surrogate losses or regularizers -- the effect of cross-validation varies significantly depending on the class of models and the distribution of the data on which it is trained. We present examples for which the variance reduction is slower than ``full speed-up'', as well as cases for which this is faster. 
Such phenomenons are illustrated in the parametric case for which we develop a full theory in \cref{sec:parametric}.
In addition, in \cref{sec:estimation-variance} we give estimators for the asymptotic variances and quantify their speed of convergence.
This allows one to draw asymptotically accurate confidence intervals.
The proofs are based on an adaptation of the Stein method for central limit theorems.

\subsection{Related work}

Cross-validation is probably the most widely used method for risk estimation today in machine learning and
statistics \autocite{book_eosl,book_adafaepov}. However, the analysis of the statistical improvements offered by the
cross-validated estimator compared to the data splitting estimator has proved difficult.
Furthermore, we note that there are at least two different quantities which may be seen as targets for the cross-validated risk estimator: 1) the average risk of the hypotheses learned on each fold and, 2) the risk of the estimation procedure over replications of the dataset.
Previously, work has mostly focused on understanding the performance in the case (1), whereas we consider both problems, with a heavier emphasis on the more delicate problem (2).
In the first analysis of the statistical performance of the cross-validated estimator, \textcite{blum1999holdout}
show that, under mild conditions, the cross-validated estimator performs no worse than
the corresponding data splitting estimator.
More recently, \textcite{kale2011cross} establish that, under some stability conditions, the $k$-fold cross-validated estimator achieves at least an asymptotic $k$-times reduction in variance. 
\textcite{kumar2013optimal} further improve on this result by relaxing the stability condition.

\section{Main results}
\label{sec:main-results}

\subsection{Notations and preliminaries}
As the complexity of the fitted models might increase with the number of observations, the results are presented for triangular arrays of estimators.
Let $(\calX_n)$ and $(\calY_n)$ be two sequences of Borel spaces.
Observations will take values in $\calX_n$ and estimators in $\calY_n$.
Classical examples may have $\calX_n = \mathbb{R}^d$ and $\calY_n = \mathbb{R}^p$ be euclidean spaces.
We denote $X^n \eqdef (X^n_{i})_{i\in \NN}$  a sequence of processes of i.i.d random variables taking values in a Borel
space $\calX_n$.
One can think of estimators as (measurable) functions that map observations to an element of $\calY_n$.
For example if $\calY_n=\calX_n=\RR$ the following function $f:(x_1,\dots,x_n)\rightarrow \frac{1}{n}\sum_{i\le n} x_i$ defines the empirical average.
For all $n\in \mathbb{N}$, we consider a sequence of estimators $(f_{l,n}:\Pi_{i=1}^l\calX_n\rightarrow \calY_n)_{l\in \NN}$.
The associated loss on an observation $x\in \calX_n$ is measured by a sequence of loss functions
$(\calL_n : \calX_n\times \calY_n \rightarrow \RR)$. 
Our goal is to approximate $\EE\Big[\calL_n\big(X^n_0, f_{l_n,n}(X^n_{1},\dots,X^n_n))\Big]$ where $l_n$ is the size of the training set.

We define $(K_n)$ to be a non decreasing sequence of integers; and for all
integers $n\in \mathbb{N}$ we write $(B^n_i)_{i\le K_n}$ a partition of
$\dbracket{n} = \{1, \dotsc, n\}$ that satisfies the following property:
\begin{equation}
\abs{B_i^n-B_j^n}\le 1,~\forall i,j\le K_n.
\end{equation}
This partition is used to define the folds: two observations $X^n_i$ and $X^n_j$ are in the same fold if $i,j$ belong to the same $B_k^n$ for some $k$. We abbreviate $(X^n_1, \dotsc, X^n_n)$ by $X^n_{1:n}$ and for all
subsets $B\subset \NN$ we write $X^n_{B} \eqdef (X^n_l)_{l\in B}$.
Therefore $X^n_{B_i^n}$ denotes all the observations in the $i$th fold and $X^n_{\dbracket{n} \setminus B_i^n}$ all the observations in $(X^n_1,\dots, X^n_n)$ that are not in the $i$th fold.

The
$K_n$-fold cross validated risk will involve $K_n$ different estimators, to
simplify matters we write
$\hat{f}_j(X^n) \eqdef f_{n-\abs{B_j^n}, n}(X^n_{\dbracket{n}\setminus B_j^n})$ the
estimators trained on $X^n_{\dbracket{n} \setminus B_j^n}$.
For a given hypothesis $f \in \calY_n$, we define its risk:
\begin{equation}
  R_n(f) \eqdef \EE(\mathcal{L}_n(\tilde{X}^n_1, f)),
\end{equation}
where $\tilde{X}^n_1$ denotes an independent copy of $X^n_1$.
Similarly, we define the average loss at a given observation, and the risk of the estimator:
\begin{align}
  \bar{\calL}_{l,n}(x) &\eqdef \EE\bigl[\calL_n(x, f_{l,n}(X^n_{1:l})\bigr], \\
  R_{l, n} &\eqdef \EE\bigl[\calL_n(\tilde X_0^n, f_{l,n}(X^n_{1:l},)\bigr].
\end{align}
Note that $  R_{l, n}$ is the quantity we want to approximate.
Finally, let $b_n(i)$ be the index of the  partition element $i$ belongs to, meaning $i\in B_{b_n(i)}^n$.

We wish to compare the performance of the two following estimators of the
predictive risk of an estimator:
\begin{align}
\hat{R}_{\mathrm{cv}} &\eqdef \frac{1}{n}\sum_{i \leq n}\mathcal{L}_n(X^n_i,\hat{f}_{b_n(i)}(X^n_{1:n})), \\
\hat{R}_{\mathrm{Split}} &\eqdef \frac{K_n}{n}\sum_{i\in B_1^n}\mathcal{L}_n(X^n_i,\hat{f}_{1}(X^n_{1:n})).
\end{align}
To do so we need to make some assumption on the stability
of the estimating procedure. In this goal we write $X^{'}$ to be an independent
copy of the process $X^n$ and for all integers $i,j\in \mathbb{N}$ the following
processes $X^{n,i}$ and $X^{n,i,j}$ are such that
\begin{equation}
  X^{n,i}_k= \begin{cases}
    X^n_k &\text{if } k \neq i,\\
    X^{'}_i &\text{otherwise.}
  \end{cases},
  \text{ and }
  X^{n,i,j}_k = \begin{cases}
    X^n_k &\text{if } k\neq i,j, \\
    X^{'}_k & \text{otherwise.}
  \end{cases}.
\end{equation}
We define the following two functionals: for $i, j \leq n$ and $i \neq j$,
$\nabla_i: (\calX^n_n \rightarrow \RR) \times \calX^n_n \rightarrow \RR$
and $\nabla_{i,j} : (\calX^n_n \rightarrow \RR) \times \calX^n_n \rightarrow \RR$ to be such that,
\begin{align}
  \nabla_i(f, X^n) &\eqdef f(X^n)-f(X^{n,i}), \\
  \nabla_{i,j}(f, X^n) &\eqdef \nabla_i(f,X^n)-\nabla_i(f,X^{n,j}).
\end{align}
Finally, we let $d_W(P, Q)$ denote the Wasserstein-1 distance between two distributions $P$ and $Q$.

Following common conventions in statistics, we may omit the explicit dependency on $X^n$ where appropriate: for example, we may write $\hat{f}_j = \hat{f}_j(X^n)$ for our estimator.
Additionally, although all of our results are presented in the triangular setup, we may often omit the index $n$ in our proofs when it is obvious from the context.

\subsection{Main Results}
\label{sec:main-results-statement}

We recall that $\hat{R}\big(\hat{f}_j(X^n)\big)$ is the conditional expected loss of the (random) estimator $\hat{f}_j(X^n)$ on a new data point. 
Consider the risk of the ensemble estimator:
\begin{equation}
\hat{R}^{\rm{average}}_{n,K_n} \eqdef \frac{1}{K_n}\sum_{j\le K_n}R_n \big(\hat{f}_j(X^n)\big),
\end{equation}
it may be viewed as the average risk of the hypotheses obtained from each fold, or alternatively viewed as the risk of an ensemble hypothesis which randomly selects one of the learnt hypotheses on a fold.
Note that it is still random as it depends on the $K_n$ different estimates $(\hat{f}_1(X^n),\dots, \hat{f}_{K_n}(X^n))$.
Its expected value is:
\begin{equation}
\overline{R}_{n,K_n} \eqdef \frac{1}{K_n}\sum_{j\le K_n}R_{n-|B_i^n|,n},
\end{equation}
which can be simplified to $R_{n - \abs{B_1^n}, n}$ when all blocks have the same size.
When studying the asymptotics of the cross-validated risk estimator $\hat{R}_{\text{cv}}$, one can study its convergence to either ${\hat{R}^{\rm{average}}_{n,K_n}}$ or alternatively to  $\overline{R}_{n,K_n}$.
The first one requires weaker conditions, and is of interest to characterise the performance of $\hat{R}_{\text{cv}}$ as an estimator of the risk of the ensemble \emph{hypothesis} (which is itself random).
On the other hand, convergence to $\overline{R}_{n, K_n}$ requires somewhat stronger conditions, and is of much broader interest.
Indeed, this is the regime to consider to understand the performance of $\hat{R}_{\text{cv}}$ in estimating the average performance of the \emph{estimator} of interest.
We present results for both cases.

We start by considering the convergence of $\hat{R}_{\text{cv}}$ to $\hat{R}_{n, K_n}^{\mathrm{average}}$.
In general, characterizing such convergence may require stability conditions on the estimators under consideration.
To express these stability conditions,
we define $\beta^{l,n}_1(X^n)$ and $\beta^{l,n}_2(X^n)$ to be the following random variables: 
\begin{align}
  \beta^{l,n}_1(X^n) &\eqdef \nabla_1\bigl(\calL_n(\tilde{X}^n_0, f_{l,n}(\cdot_{1:l})),X^n\bigr), \\
  \beta^{l,n}_2(X^n) &\eqdef \nabla_{1,2}\bigl(\calL_n(\tilde{X}^n_0, f_{l,n}(\cdot_{1:l})),X^n\bigr).
\end{align}
$\beta^{l,n}_1(X^n)$ represents the stability of $\calL_n(\tilde{X}^n_0, f_{l,n}(X_{1:l}))$ if the observation $X_1$ were changed for an independent copy $X'_1$.
This quantifies the first order stability of our model.
As for $\beta^{l,n}_2(X^n)$, it quantifies the second order stability by measuring the change to $\beta^{l,n}_1(X^n)$ if the observation $X_2$ was replaced by an independent copy.
Finally, the asymptotic variance will depend on:
\begin{equation}
    {\sigma}_{l,n}^2 \eqdef \EE\Bigl[\Var \Bigl(\calL_n(\tilde{X},f_{l,n}(X^n)) \mid f_{l,n}(X^n)\Bigr)\Bigr].
\end{equation}

\begin{theorem}\label{clt1}
  Let $(X^n_i)$ be a triangular array for i.i.d observations, and
  $(f_{l,n}:\mathcal{X}^l_n\rightarrow \mathcal{Y}_n)$ be a sequence of
  predicting functions. Write $(K_n)$ to be an increasing sequence, and
  let $N_n$ denote the set of at most two elements $N_n = [n - n / K_n, n + n / K_n + 1] \cap \NN$.
  Suppose that the following holds.
\begin{itemize}
\item[$H_0$.] The estimators are symmetric, i.e $f_{l,n} = f_{l,n}\circ \pi$ for all permutations $\pi\in \mathbb{S}_l$.
\item[$H_1$.] $\displaystyle \bigcup_{n \in \NN} \Bigl\{\calL_n(\tilde{X}^n_1,f_{l,n}(X^n_{1:l}))^2 : l \in N_n\Bigr\}$ is U.I.
\item[$H_2$.] $\displaystyle \max_{l \in N_n} \sqrt{n}\norm[\big]{ \beta_1^{l,n}(X^n)}_{L_2}=o(1)$; and  $\displaystyle \max_{l \in N_n} n\norm[\big]{\beta_2^{l,n}(X^n)}_{L_2}=o(1)$.
\item[$H_3$.] There exists $\sigma^2_1> 0$, s.t. $\displaystyle \max_{l \in N_n}\abs[\big]{\sigma_{l,n}^2-\sigma^2_1}\rightarrow 0$ as $n \rightarrow \infty$.
\end{itemize}
Then we have that:
\begin{equation}
  d_W\Bigl(\sqrt{n}
  \Big[\hat{R}_{\rm{cv}}-{\hat{R}^{\rm{average}}_{n,K_n}}\Bigr], \,
  \calN(0,\sigma^2_1)\Bigr)\rightarrow 0,
\end{equation}
and additionally if $K_n=o(n)$ then the following holds:
\begin{equation}
  d_W\Bigl(\sqrt{\frac{n}{K_n}}\Bigl[\hat{R}_{\rm{split}}-\hat{R}\big(\hat{f}_1(X^n)\big)\Bigr], \,
  \calN(0,\sigma^2_1)\Big) \rightarrow 0.
\end{equation}
\end{theorem}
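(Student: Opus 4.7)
The plan is to reduce both statements to CLTs for explicit sums of mean-zero random variables and then to apply Stein's method, with the stability assumptions of $H_2$ controlling the cross-term dependence. For the cross-validated part, set $Z_i \eqdef \calL_n(X_i^n,\hat f_{b_n(i)}(X^n)) - R_n(\hat f_{b_n(i)}(X^n))$. Since $X_i^n$ lies in block $B_{b_n(i)}^n$ and does not enter the training set of $\hat f_{b_n(i)}$, the observation $X_i^n$ is independent of $\hat f_{b_n(i)}(X^n)$, so $\EE[Z_i \mid \hat f_{b_n(i)}]=0$. Up to an error of order $K_n/n$ coming from unequal block sizes (bounded using $|B_i^n-B_j^n|\le 1$ and $H_1$), one has $\sqrt{n}\,[\hat R_{\rm cv}-\hat R^{\rm average}_{n,K_n}] = n^{-1/2}\sum_{i\le n} Z_i$, and it suffices to establish a Wasserstein CLT for the right-hand side.

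The variance computation is then clean: for $i,j$ in the same block $Z_i,Z_j$ are conditionally independent given the shared training data, so $\Cov(Z_i,Z_j)=0$; the diagonal contributes $\EE[Z_i^2]=\sigma^2_{l,n}\to\sigma_1^2$ by $H_3$; and for $i,j$ in different blocks a swap estimate gives $|\Cov(Z_i,Z_j)|\lesssim\|\beta_1^{l,n}\|_{L_2}^2$, so $H_2$ kills the $O(n^2)$ cross-block contributions and $\Var(n^{-1/2}\sum_i Z_i)\to\sigma_1^2$. For the CLT itself I would use Stein's method via exchangeable pairs: pick $I$ uniformly on $\dbracket{n}$ and resample $X_I^n$ to form $W'$; in the decomposition of $W-W'$ the within-block direct term yields the Stein drift $\tfrac{1}{n}W$ plus a bias controlled by $\beta_1^{l,n}$, the cross-block terms (coming from the dependence of every other $\hat f_j$ on $X_I^n$) are also bounded by $\beta_1^{l,n}$, and the second-order remainder is bounded by $\beta_2^{l,n}$. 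Combined with $H_1$ for the requisite moment control, this yields $d_W\to 0$. The main obstacle is precisely the bookkeeping in this cross-block step: every estimator $\hat f_j$ with $j\neq b_n(I)$ simultaneously depends on the resampled point, and one has to verify that the accumulation of first-order perturbations scales exactly at the rate tamed by $H_2$.

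For the split statement the argument is considerably simpler. Writing $\hat R_{\rm split}-R_n(\hat f_1(X^n)) = \frac{K_n}{n}\sum_{i\in B_1^n}\tilde Z_i$ with $\tilde Z_i \eqdef \calL_n(X_i^n,\hat f_1)-R_n(\hat f_1)$, the test indices $B_1^n$ are disjoint from the training set of $\hat f_1$, so conditional on $\hat f_1$ the variables $\tilde Z_i$ are i.i.d.\ mean-zero with conditional variance $\sigma^2_{n-|B_1^n|,n}\to\sigma_1^2$ by $H_3$. Since $K_n=o(n)$ the block size $|B_1^n|\sim n/K_n$ diverges, so a conditional Berry-Esseen bound applied to $\sqrt{n/K_n}\cdot\frac{K_n}{n}\sum_{i\in B_1^n}\tilde Z_i$, with $H_1$ used to truncate and control the third absolute moment uniformly, delivers the stated Wasserstein convergence.
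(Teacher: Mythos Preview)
Your decomposition $Z_i=\calL_n(X_i^n,\hat f_{b_n(i)})-R_n(\hat f_{b_n(i)})$ and the swap-based covariance bookkeeping are correct and coincide with what the paper does, but the device you propose for extracting the CLT is different. The paper does \emph{not} use Stein's method here; Stein is reserved for Theorems~\ref{clt2} and~\ref{clt3}, where $L_4$ moment bounds are assumed. For Theorem~\ref{clt1} the paper runs the martingale CLT on the array $S^i_n=n^{-1/2}\sum_{l\le n}\EE[Z_l\mid F_i]$ with $F_i=\sigma(X_1^n,\dotsc,X_i^n)$, whose increments decompose as $\sqrt n\,Y^i_n=\EE\bigl[Z_i+\sum_{l\notin B_{b_n(i)}^n}\nabla_i Z_l\mid F_i\bigr]$. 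The Lindeberg condition follows from $H_1$ once the cross-block $\nabla_i Z_l$ contributions are shown to vanish in $L_2$ (same-block cross terms are zero by conditional independence; different-block cross terms are bounded via $\norm{\beta_2^{l,n}}_{L_2}^2$), and the predictable quadratic variation is shown to concentrate at $\sigma_1^2$ by a truncation plus Efron--Stein argument. What each route buys: the martingale CLT is tailored to a Lindeberg hypothesis, which is precisely what uniform integrability of squares supplies, so no higher moments are needed; your exchangeable-pairs route would in principle give an explicit bound but carries a remainder of the type $\lambda^{-1}\EE\abs{W-W'}^3$ that $H_1$ alone does not control.

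That is the one real gap in your sketch: for the cross-validated statement you write that $H_1$ supplies ``the requisite moment control'', but standard exchangeable-pairs bounds need more than UI of squares. You would have to insert a truncation step (as you do for the split case) and run Stein on the truncated sum, using UI to kill the tail---this works, but amounts to rebuilding a Lindeberg argument inside Stein's method, and the martingale CLT is the cleaner tool for that. For the split statement your conditional-i.i.d.\ reduction is exactly right; replacing ``Berry--Esseen plus truncation'' with a conditional Lindeberg CLT (which is what the paper's martingale argument specialises to on a single block) is the more natural match for $H_1$, but either way the conclusion follows.
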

\begin{remark}
  If the folds were all of the same size, then the terms $\max_{l\in N_n}$ in the statement of theorem could be deleted. However, if the folds are uneven then some estimators are trained on datasets with exactly one more observation than the others, hence the term $\max_{l\in N_n}$.
  We note that at each time the maximum is taken on at most two elements.
\end{remark}
\begin{remark}
\Cref{clt1} implies that the cross validated risks converges $\sqrt{K_n}$ times faster to $\hat{R}_{n,K_n}^{\mathrm{average}}$ than the split risk converges to $\hat{R}\big(\hat{f}_1(X^n)\big)$ when $K_n = o(n)$.
\end{remark}

Under stronger moment conditions one can prove Berry-Esseen bounds. Let $N_n$ denote the same interval
as in \cref{clt1}, and define the following quantities:
\begin{align*}
  \calS &\eqdef \max_n \max_{l \in N_n}\norm*{\calL_n(\tilde{X}^n_1,f_{l,n}(X^n_{1:l_n}))}_{L_4}, \\
  \epsilon_n(\Delta) &\eqdef \sqrt{n}\max_{l \in N_n}\norm*{\beta^{l,n}_1(X^n)}_{L_4}, \\
  \epsilon_n(\nabla) &\eqdef n\max_{l \in N_n}\norm*{\beta^{l,n}_2(X^n)}_{L_4}.
\end{align*}

\begin{theorem}\label{clt2}
  Let $(X^n_i)$ be a triangular array for i.i.d observations, and
  $(f_{l,n}:\mathcal{X}^l_n\rightarrow \mathcal{Y}_n)$ be a sequence of predicting
  functions. Write $(K_n)$ to be an increasing sequence. Suppose that the
  following holds:
\begin{itemize}
\item[$H_0$.] The estimators are symmetric i.e.  $f_{l,n} = f_{l,n}\circ \pi$ for all permutations $\pi\in \mathbb{S}_l$,
\item[$H_1$.] $\displaystyle \sup_{n}\max_{l \in N_n}\norm*{\calL_n(\tilde{X}^n_1,f_{l,n}(X^n_{1:l}))}_{L_4}<\infty$,
\item[$H_2$.] $\displaystyle \sqrt{n}\max_{l \in N_n}\norm*{\beta_1^{l,n}(X^n)}_{L_4}=o(1)$; and  $\displaystyle n\max_{l \in N_n}\norm*{\beta_2^{l,n}(X^n)}_{L_4}=o(1)$,
\item[$H_3$.] There exists $\sigma^2_1>0$ s.t. $\displaystyle \max_{l \in N_n}\abs*{\sigma_{l,n}^2-\sigma^2_1} \rightarrow 0$,
\end{itemize}
then there is a constant $C$ that does not depend on $n\in \mathbb{N}$,
$(f_{n,l})$ and $(X^n)$ such that: 
\begin{equation}
  \begin{split}
  &d_W\Big(\sqrt{n}\bigl(\hat{R}_{CV}- {\hat{R}^{\rm{average}}_{n,K_n}}\bigr), \, \calN(0,\sigma^2_1)\Big)\\
  &\quad\leq C \Bigl\{\calS\bigl(\epsilon_n(\Delta)+\epsilon_n(\nabla)\bigr)
    +\frac{1}{\sqrt{n}} \bigl(\calS^3+\epsilon_n(\Delta)^2\bigl(\epsilon_n(\nabla)+\calS\bigr)+\calS\epsilon_n(\nabla)^2 \bigr)\Bigr\}
 \\ &\qquad +\max_{l \in N_n}\abs[\big]{\sigma_{l,n}^2-\sigma^2_1}.
\end{split}
\end{equation}

\end{theorem}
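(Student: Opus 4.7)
The strategy is Stein's method combined with a swap-based expansion. Set $W_n\eqdef\sqrt{n}(\hat{R}_{CV}-\hat{R}^{\rm{average}}_{n,K_n})$ and $G\sim\calN(0,\sigma_1^2)$. The classical reduction to the Stein equation for $1$-Lipschitz targets gives
\begin{equation*}
d_W(W_n,G)\le\sup_f\bigl|\EE[\sigma_1^2 f'(W_n)-W_n f(W_n)]\bigr|,
\end{equation*}
the supremum running over solutions $f$ with $\norm{f'}_\infty$ and $\norm{f''}_\infty$ bounded (by a constant depending only on $\sigma_1$). The structural identity $W_n=n^{-1/2}\sum_i g_i$, with $g_i\eqdef\calL_n(X^n_i,\hat f_{b_n(i)})-R_n(\hat f_{b_n(i)})$, has the crucial property $\EE[g_i\mid X^{n,i}]=0$, since $\hat f_{b_n(i)}$ is trained on $X^n_{\dbracket{n}\setminus B^n_{b_n(i)}}$ and therefore does not involve $X^n_i$.

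Let $W_n^{(i)}$ denote $W_n$ recomputed on $X^{n,i}$. Since $W_n^{(i)}$ is $X^{n,i}$-measurable, $\EE[g_i f(W_n^{(i)})]=0$, giving the swap identity
\begin{equation*}
\EE[W_n f(W_n)]=n^{-1/2}\sum_i\EE\bigl[g_i\bigl(f(W_n)-f(W_n^{(i)})\bigr)\bigr].
\end{equation*}
A second-order Taylor expansion of $f$ splits the right-hand side into a \emph{diagonal} linear part (index $k=i$), a \emph{cross-fold} linear part (over pairs $(i,k)$ with $b_n(k)\ne b_n(i)$), and a quadratic Taylor remainder; same-fold contributions with $k\ne i$ vanish because $\hat f_{b_n(i)}$ does not change under the $X_i$-swap. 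A direct calculation exploiting the conditional independence of $X_i$ and $X'_i$ yields
\begin{equation*}
\EE\bigl[g_i(g_i-g_i^{(i)})\bigm| X^{n,i}\bigr]=\Var\bigl(\calL_n(\tilde X,\hat f_{b_n(i)})\bigm|\hat f_{b_n(i)}\bigr),
\end{equation*}
whose unconditional expectation equals $\sigma^2_{l,n}$ with $l=n-\abs{B^n_{b_n(i)}}$; invoking $H_3$ and a first-order swap estimate to replace $f'(W_n^{(i)})$ by $f'(W_n)$ matches the diagonal contribution with $\sigma_1^2\EE[f'(W_n)]$ up to the announced $\max_l\abs{\sigma^2_{l,n}-\sigma_1^2}$ term. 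The quadratic Taylor remainder is controlled by $\norm{f''}_\infty\,\EE[\abs{g_i}(W_n-W_n^{(i)})^2]$; after H\"older with $H_1$ and a careful bound on $\norm{W_n-W_n^{(i)}}_{L_4}$ (which uses the iterated-swap structure to avoid a triangle-inequality loss), this produces the higher-order contributions $n^{-1/2}(\calS^3+\calS\,\epsilon_n(\Delta)^2+\calS\,\epsilon_n(\nabla)^2)$.

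The main obstacle is the cross-fold sum $n^{-1}\sum_{b_n(k)\ne b_n(i)}\EE[g_i(g_k-g_k^{(i)})f'(W_n^{(i)})]$: a direct Cauchy--Schwarz bound $\calS\,\norm{\beta_1^{l,n}}_{L_4}$ per pair, summed over $O(n^2)$ pairs and normalized by $n$, loses a factor $\sqrt{n}$ compared to the target. The remedy is a second swap at index $k$, using the further mean-zero identity $\EE[g_k\mid X^{n,k}]=0$ together with the decomposition
\begin{equation*}
g_k-g_k^{(i)}=\bigl(g_k^{(k)}-g_k^{(i,k)}\bigr)+\nabla_{i,k}(g_k).
\end{equation*}
The mixed second difference $\nabla_{i,k}(g_k)$ has $L_4$-norm bounded by a $\beta_2^{l,n}$-type quantity (using $H_0$ to relate the training/evaluation cross swap to the paper's $\beta_2$), contributing $\calS\,\epsilon_n(\nabla)$ once summed over pairs. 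The other piece $(g_k^{(k)}-g_k^{(i,k)})$, paired with $g_i$ and $f'(W_n^{(i)})$, is further expanded under the $X_k$-swap: the resulting $\beta_1$-size perturbations of $g_i$ and $W_n^{(i)}$ produce the leading $\calS\,\epsilon_n(\Delta)$ term together with higher-order remainders $n^{-1/2}\epsilon_n(\Delta)^2(\epsilon_n(\nabla)+\calS)$. Collecting every contribution reproduces the announced Berry--Esseen rate. The delicate part throughout is this double-swap bookkeeping: every combination of swaps at $i$ and $k$ perturbs each of $g_i$, $g_k$, and $W_n^{(i)}$, and these perturbations must be tracked simultaneously without losing any factor of $\sqrt{n}$.
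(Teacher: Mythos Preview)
Your overall framework—Stein's method with a swap coupling and the decomposition into diagonal, cross-fold, and Taylor-remainder pieces—matches the paper's; the paper uses the conditional expectation $W_n^i=\EE[W_n\mid X_{\dbracket{n}\setminus i}]$ rather than the swap $W_n^{(i)}$, but this is cosmetic.

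The genuine gap is in the cross-fold analysis. In $g_k=\calL_n(X_k,\hat f_{b_n(k)})-R_n(\hat f_{b_n(k)})$, the index $k$ is the \emph{test} point for fold $b_n(k)$, not a training index for $\hat f_{b_n(k)}$. Hence your $\nabla_{i,k}(g_k)$ swaps one training point ($X_i$) and one test point ($X_k$): unwinding the definition, $\nabla_{i,k}(g_k)$ equals $\bigl[\calL(X_k,\hat f(X))-\calL(X_k,\hat f(X^i))\bigr]-\bigl[\calL(X_k',\hat f(X))-\calL(X_k',\hat f(X^i))\bigr]$, i.e.\ a difference of two i.i.d.\ copies of $\beta_1^{l,n}$ with independent test points. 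Its $L_4$ norm is therefore of order $\epsilon_n(\Delta)/\sqrt{n}$, not $\epsilon_n(\nabla)/n$; symmetry ($H_0$) cannot convert a test-point swap into a second training swap. With $O(n^2)$ pairs $(i,k)$ and the $n^{-1}$ normalization, this piece alone contributes $\sqrt{n}\,\calS\,\epsilon_n(\Delta)$, a full $\sqrt{n}$ too large. Your other piece $g_k^{(k)}-g_k^{(i,k)}$ is in fact exactly zero when paired with $g_i f'(W_n^{(i)})$: conditioning on $(X,X_i')$ fixes $g_i$ and $W_n^{(i)}$, and integrating out the fresh test point $X_k'$ kills the remaining factor. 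So all the weight of the cross-fold sum sits on the term you mislabeled as $\beta_2$-type.

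The paper's remedy is to avoid pairwise bounds altogether on this sum. After pulling out $H'(W_n)$ via $\norm{H'}_\infty\le C$, it reduces to $\norm{K_n^i}_{L_2}\,\norm[\big]{\sum_{k}D_n^{i,k}}_{L_2}$ (with $D_n^{i,k}=g_k-g_k^{(i)}$) and bounds the squared sum $\sum_{j,l}\EE[D_n^{i,j}D_n^{i,l}]$ directly. For $j\ne l$ in distinct folds, both $X_j$ and $X_l$ are genuine \emph{training} indices for the other's estimator, so the double swap $\EE[D_n^{i,j}D_n^{i,l}]=\EE[(\nabla_j D_n^{i,l})(\nabla_l D_n^{i,j})]$ produces authentic $\beta_2$ factors, yielding $\norm[\big]{\sum_k D_n^{i,k}}_{L_2}^2\le\epsilon_n(\Delta)^2+\epsilon_n(\nabla)^2$ and hence the announced $\calS(\epsilon_n(\Delta)+\epsilon_n(\nabla))$. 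The cancellation across the sum in $k$ is essential; your pairwise decomposition discards it. A secondary point: matching the diagonal with $\sigma_1^2\EE[f'(W_n)]$ also requires controlling the random fluctuation of $\Var(\calL\mid\hat f_{b_n(i)})$ around $\sigma_{l,n}^2$ (the paper does this via Efron--Stein, contributing another $\calS\,\epsilon_n(\Delta)$); invoking $H_3$ alone only handles the bias.
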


The previous two theorems characterize the speed of convergence of the cross-validated risk
to the average risk of the $K_n$ different models $\hat{R}^{\mathrm{average}}_{n, K_n}$.
This is a random quantity and might not be the key quantity we are interested in.
The next theorem studies the speed of convergence of the cross validated
risk to the expected risk (its expectation). 
To state it we require some additional notation, let us write:
\begin{align}
  \calS(R) &\eqdef \sup_{n} n \max_{l \in N_n}\norm*{\EE\Bigl[\beta^{l,n}_1(X^n) \mid X^n]}_{L_4}, \\
  \epsilon_n(R) &\eqdef n^{\frac{3}{2}}\max_{l \in N_n}\norm*{\EE\Bigl[\beta^{l,n}_2(X^n) \mid X^n]}_{L_4}.
\end{align}
Note that in general, we expect the estimators to be close to the minimizer (among a certain class) of $R_n(f)$.
Therefore we may expect $\norm{\EE(\beta^{l,n}_1(X^n) \mid X^n)}$ to be smaller than $\norm{\beta^{l,n}_1(X^n)}$.
This will notably be true for parametric models.
A similar intuition is valid for $\beta_2^{l,n}(X^n)$. 

Although the problem appears similar at first, convergence to $\overline{R}_{n, K_n}$ behaves differently from convergence to $\hat{R}_{n, K_n}^{\mathrm{average}}$, and the asymptotic variances will not be the same than in \cref{clt1}.
To state the theorem more clearly, let $(\epsilon_n(\sigma))_{n \in \NN}$ and $(\epsilon_n(d))_{n\in \NN}$ be sequences and consider $\sigma_1$, $\sigma_2, \rho \in \mathbb{R}$ such that:
\begin{equation}\begin{split}
    \max_{l \in N_n} \abs*{\sigma_{l,n}^2-\sigma_1^2} \leq \epsilon_n(\sigma), \\
    \max_{l_1,l_2 \in N_n} \abs*{l_1\Cov\Bigl(\bar\calL_{l_2,n}( X_1), \EE\bigl(\beta^{l_1,n}_1(X^n) \mid X_1\bigr)\Bigr) - \rho} \leq \epsilon_n(\sigma), \\
    \max_{l \in N_n} \abs*{l\Var\Bigl[R\bigl(f_{l,n}(X_{1:l})\bigr)\Bigr] - \sigma_2^2} \leq \epsilon_n(\sigma), \\
    \max_{l_1,l_2 \in N_n} n\norm[\Big]{\EE\bigl[\beta_1^{n,l_1}(X^n)-\beta_1^{n,l_2}(X^n) \mid X_1\bigr]}_{4} \leq \epsilon_n(d).
\end{split}\end{equation}
As before, we note that $N_n$ is a set of size 2, and the maximums are not intended to denote any form of uniform convergence, and simply account for the potentially uneven splits.

\begin{theorem}\label{clt3}
  Let $(X^n_i)$ be a triangular array for i.i.d observations, and
  $(f_{l,n}:\calX^l_n\rightarrow \calY_n)$ be a sequence of predicting
  functions. Let $(K_n)$ denote any increasing sequence.
  Suppose that the following holds:
\begin{itemize}
\item[$H_0$.] The estimators are symmetric, i.e.  for all integers $l, n$, and for all permutations $\pi \in \mathbb{S}_n$ , $f_{l,n} = f_{l,n}\circ \pi$.
\item[$H_1$.] $\displaystyle \sup_{n}\max_{l \in N_n}\norm[\big]{\calL_n(\tilde{X}^n_1,f_{l,n}(X^n_{1:l}))}_{L_4} < \infty$,
\item[$H_2$.] $\displaystyle \sqrt{n}\max_{l \in N_n}\norm[\big]{\beta^{l,n}_1(X^n)}_{L_4}=o(1)$, and $\displaystyle n\max_{l \in N_n}\norm*{\beta^{l,n}_2(X^n))}_{L_4}=o(1)$,
\item[$H_3.$] $\displaystyle n\max_{l \in N_n}\norm[\big]{\EE\Bigl[\beta_1^{l,n}(X^n) \mid X^n]}_{L_4}=O(1)$; and $\displaystyle n^{\frac{3}{2}}\max_{l \in N_n}\norm[\big]{\EE\Bigl[\beta_2^{l,n}(X^n) \mid X^n]}_{L_4}=o(1)$.
\end{itemize}
Then there is a constant $C_1$ that does not depend on $n,
(f_{n,l}), (X^n)$ and $(\calL_n)$ such that the following holds:
\begin{equation}
  \begin{split}
    &d_W\Bigl(\sqrt{n}\Bigl[\hat{R}_{\rm{cv}}-\overline{R}_{n,K_n}\Bigr], \, \calN(0,\sigma_{\mathrm{cv}}^2)\Bigr)\\
    &\leq C_1 \Bigl\{\Bigl(\epsilon_n(\nabla)+ \epsilon_n(\Delta)+{\epsilon_n(\hat{R}})\Bigr) \Bigl(\epsilon_n(\nabla)
        + \epsilon_n(\Delta)  +\epsilon_{\frac{n}{2}}(\hat{R}) + \calS+\calS(R)\Bigr)
\\&\qquad +\frac{1}{\sqrt{n}} \bigl(\calS(R)+\calS\bigr)^3
+\calS(R)\epsilon_n(d)+ \epsilon_n(\sigma)\Bigr\}
 \end{split}
\end{equation}
where $\sigma_{\rm{cv}}^2 \eqdef \sigma_1^2+\sigma_2^2+2\rho$.
Additionally, if $K_n=o(n)$, then there exists $C_2$ which does not depend on $n$, $(f_{n,l})$, $(X^n)$ and $(\calL_n)$ such that:
\begin{equation}
  \begin{split}
    &d_W\Bigl(\sqrt{\frac{n}{K_n}}\Bigl(\hat{R}_{\rm{split}}- R_{n-|B_1^n|,n}\Bigr), \, \calN(0,\sigma_{\mathrm{split}}^2)\Bigr)\\
    &\quad\leq C_2  \Bigl\{n^{-1/2}\sqrt{K_n}\calS\bigl(\calS+\calS(R)\bigr)^2\\
    &\qquad+\Bigl(\epsilon_n(\Delta)+\frac{\epsilon_n(\nabla)+\epsilon_n(R)}{\sqrt{K_n}}\Bigr)\frac{\calS(R)}{\sqrt{K_n}}+\epsilon_n(\Delta)^2
+ \epsilon_n(\sigma)\Bigr\},
 \end{split}
\end{equation}
where $\sigma_{\rm{split}}^2 \eqdef \sigma_1^2+\sigma_2^2$
\end{theorem}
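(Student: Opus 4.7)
\textbf{Decomposition and reduction to \cref{clt2}.} The plan is to decompose $\hat{R}_{\rm{cv}} - \overline{R}_{n,K_n} = T_1 + T_2$, where
\begin{equation*}
    T_1 \eqdef \hat{R}_{\rm{cv}} - \hat{R}^{\rm{average}}_{n,K_n}, \qquad T_2 \eqdef \hat{R}^{\rm{average}}_{n,K_n} - \overline{R}_{n,K_n},
\end{equation*}
and to obtain a joint Gaussian approximation for $\sqrt{n}(T_1, T_2)$. The term $T_1$ is exactly the statistic controlled by \cref{clt2}, whose hypotheses are identical to $H_0$-$H_2$ here, so it immediately yields $\sqrt{n}\,T_1 \approx \calN(0, \sigma_1^2)$ with the corresponding Berry-Esseen rate. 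The entirely new piece is $T_2$, the fluctuation of the random ensemble risk around its deterministic mean; the stronger hypothesis $H_3$, which controls $\EE[\beta_1^{l,n}\mid X^n]$ and $\EE[\beta_2^{l,n}\mid X^n]$, is precisely what is required to analyse $T_2$ at rate $n^{-1/2}$.

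\textbf{H\'ajek projection of $T_2$ and joint CLT.} For each block $j$, $R_n(\hat{f}_j(X^n))$ is a symmetric function of $l_j = n - |B_j^n|$ i.i.d.\ observations and therefore admits a Hoeffding decomposition. A short symmetry calculation gives the key identity
\begin{equation*}
    \EE\bigl[R_n(\hat{f}_j(X^n)) \mid X_i\bigr] - R_{l_j,n} = \EE\bigl[\beta_1^{l_j,n}(X^n) \mid X_i\bigr], \qquad i \notin B_j^n,
\end{equation*}
so the first-order H\'ajek term is bounded in $L_4$ by $\calS(R)=O(1)$. The residual of the projection is a second-order Hoeffding kernel whose moments are controlled via $\EE[\beta_2^{l,n}\mid X^n]$: the assumption $\epsilon_n(R) = n^{3/2}\norm{\EE[\beta_2^{l,n}\mid X^n]}_{L_4}=o(1)$ is exactly tuned to make this residual $o(n^{-1/2})$ after rescaling by $\sqrt{n}$. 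Summing over $j$ and exchanging the order of summation gives
\begin{equation*}
    T_2 \approx \frac{K_n-1}{K_n}\sum_{i=1}^n \EE\bigl[\beta_1^{l,n}(X^n) \mid X_i\bigr],
\end{equation*}
a sum of i.i.d.\ centred variables whose total variance converges to $\sigma_2^2/n$; non-uniformities across $l_1,l_2\in N_n$ are absorbed by $\epsilon_n(d)$ and $\epsilon_n(\sigma)$. Once both $T_1$ and $T_2$ have been linearised as sums of functions of the $X_i$'s, the Stein-exchangeable-pair argument underlying \cref{clt2} can be applied to any linear combination $aT_1 + bT_2$, yielding joint Gaussianity. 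The cross-covariance is computed pointwise: each observation contributes $\Cov(\bar\calL_{l,n}(X_1), \EE[\beta_1^{l,n}(X^n)\mid X_1]) = \rho/l$, so summing the $n$ contributions gives $\Cov(\sqrt{n}T_1,\sqrt{n}T_2)\to \rho$ and hence $\sqrt{n}(T_1+T_2)\to \calN(0, \sigma_1^2+\sigma_2^2+2\rho) = \calN(0,\sigma_{\rm{cv}}^2)$.

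\textbf{Split case and main obstacle.} The split statistic admits the parallel decomposition $\hat{R}_{\rm{split}} - R_{n-|B_1^n|,n} = S_1 + S_2$ with $S_1 \eqdef \hat{R}_{\rm{split}} - R(\hat{f}_1)$ and $S_2 \eqdef R(\hat{f}_1) - R_{l,n}$, and is simpler because $S_1$ and $S_2$ are built from the disjoint blocks $B_1^n$ and $\dbracket{n}\setminus B_1^n$. Conditionally on the training block, $S_1$ is a centred average of $|B_1^n|$ i.i.d.\ losses with conditional variance $\sigma_{l,n}^2\to \sigma_1^2$, handled by a conditional Berry-Esseen; $S_2$ is $\sigma(X^n_{\dbracket{n}\setminus B_1^n})$-measurable, and exactly the same H\'ajek-projection argument used for $T_2$ applies, now on only $l$ training observations. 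Independence across blocks then combines the two to produce the stated split bound. The hardest step in the whole argument is the H\'ajek residual in $T_2$: the $K_n$ ensemble hypotheses are trained on overlapping data, so controlling the second-order error uniformly over $j$ is precisely what forces the upgrade from $H_2$ to the conditional $H_3$, and matching the cross-covariance $\rho$ requires a simultaneous first-order expansion of $T_1$ and $T_2$ at the level of individual observations via the multivariate Stein step.
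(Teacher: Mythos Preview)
Your decomposition $T_1+T_2$ is a legitimate and conceptually clean alternative to the paper's route, but it is genuinely different in organisation. The paper does \emph{not} split the statistic or invoke \cref{clt2} as an intermediate step; it applies Stein's method directly to the full quantity $W_n=\sqrt{n}(\hat R_{\rm cv}-\overline R_{n,K_n})$, writing $\sqrt{n}\,\Delta_{i,n}=K_n^i+\calR_i^n+\sum_l \EE[D^{i,l}_n\mid F_i]$ where $\calR_i^n$ carries exactly your $T_2$-contribution, and then bounds all three Stein pieces at once. The cross term $\rho$ appears in the paper not via a joint-CLT/Cram\'er--Wold step but inside the single variance computation $\Var(\EE[g_{n,i}\mid X_i])$ with $g_{n,i}=\calL(X_i,\hat f_{b_n(i)})+\sum_j|B_j^n|R_n(\hat f_j)$.

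What your route buys is transparency: the H\'ajek projection of $T_2$ makes it immediately clear why $H_3$ is the right hypothesis and why $\sigma_2^2$ and $\rho$ take the stated form. What the paper's route buys is that it never has to combine two separate Berry--Esseen bounds. This is where your sketch is thin: invoking \cref{clt2} as a black box gives only a \emph{marginal} Wasserstein bound on $\sqrt{n}\,T_1$, and a separate Hoeffding analysis gives one on $\sqrt{n}\,T_2$; neither yields the bound for the sum with the correct covariance. Your own remedy---``apply the Stein argument to $aT_1+bT_2$''---is correct, but at that point you are no longer reducing to \cref{clt2}, you are redoing its proof on $T_1+T_2$, which is precisely the paper's computation. (A minor terminological point: the Stein construction here is not exchangeable pairs but the leave-one-out coupling $W_n^i=\EE[W_n\mid X_{\dbracket{n}\setminus i}]$.) For the split case, note that $S_1$ is \emph{not} built from $B_1^n$ alone---it depends on $\hat f_1$ and hence on the training block---so ``disjoint blocks'' is misleading; what you actually need, and what you use in the next sentence, is $\EE[S_1\mid X_{\dbracket{n}\setminus B_1^n}]=0$, which gives uncorrelatedness and makes the conditional Berry--Esseen plus H\'ajek combination go through.
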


\begin{remark}
In \cref{clt1} the asymptotic variance of the cross-validated risk was the
same as the one of the split risk. This allowed us to conclude that the
cross-validated risk converged $\sqrt{K_n}$ times faster than the simple
split between train and test. However in \cref{clt3} the variances are
different. The cross-validated risk converges faster by a factor of:
\begin{equation}
    \sqrt{K_n\Bigl(1 + \frac{2\rho}{\sigma_1^2+\sigma_2^2}\Bigr)^{-1}}.
\end{equation}
In particular, the value of $\rho$ determines whether a ``full'' speed-up analogous to \cref{clt1} takes place.
For $\rho < 0$, we observe a reduction in variance by a factor larger $K_n$. For $\rho = 0$, we observe an exactly
$K_n$ times reduction in variance. For $\rho > 0$, we observe a reduction in variance by a factor less than $K_n$.
\end{remark}
\begin{remark}
  \Cref{clt3} only presents a Berry-Esseen bounds. Similarly to \cref{clt1}, a simple central limit theorems may be obtained under weaker moment conditions.
  Moreover if the loss functions $\mathcal{L}_n$ are uniformly bounded, i.e $\sup_n\| \calL_n \|_{\infty}<\infty$, then it is sufficient to replace $H_1-H_3$ with first moment conditions.
\end{remark}

\section{Estimation of the asymptotic variance and confidence intervals}
\label{sec:estimation-variance}

In the previous section we proved  central limit theorems and Berry-Esseen bounds for the cross-validated risk.
Using this, we wish to draw confidence intervals.
To do so we need to be able to estimate the asymptotic variance in a consistent manner.
In this section we propose estimators for $\sigma^2_1$ and $\sigma_{\rm{cv}}^2$; and characterize their speed of convergence to the desired quantity.

\begin{prop}
  \label{nice}
  Let $(X^n_i)$ be a triangular array for i.i.d observations, and
  $(f_{l,n}:\mathcal{X}^l_n \rightarrow \mathcal{Y}_n)$ be a sequence of predicting functions.
  Write $(K_n)$ to be an increasing sequence, and suppose that the conditions of \cref{clt2} are respected.
  Let $(\hat{\Sigma}_j)$ denote the empirical variance of each block:
  \begin{equation}
    \hat{\Sigma}^2_j(X^n) \eqdef
    \frac{1}{\abs{B_j^n}-1}\sum_{l\in B_j^n}\Bigl[
      \calL_n(X^n_l,\hat{f}_j(X^n))
      - \frac{1}{\abs{B_j^n}}\sum_{l\in B_j^n}\calL_n(X^n_l,\hat{f}_j(X^n))\Bigr]^2.
  \end{equation}
  Write $\Sigma_{\rm{cross}}^2 \eqdef \frac{1}{K_n}\sum_{j\le K_n}\hat{\Sigma}^2_j(X_{1:n})$, it is a consistent estimator of $\sigma_1^2$ and the following holds:
  \begin{equation}
  \begin{split}
    &\norm[\Big]{\Sigma_{\rm{cross}}^2-\sigma^2_1}_{L_2}  \leq \frac{8\calS}{\sqrt{n}}\Big[\calS+ \epsilon_n(\Delta)\Big]+\epsilon_n(\sigma)+o(\frac{1}{\sqrt{n}}).
  \end{split}
\end{equation}
\end{prop}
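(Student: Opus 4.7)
The plan is a bias-variance split of the $L_2$ error, with the variance further decomposed into a within-fold sampling term and a between-fold model term.

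\textbf{Bias.} Conditional on $\hat{f}_j$, the losses $\{\calL_n(X^n_l,\hat{f}_j)\}_{l\in B_j^n}$ are i.i.d.\ with variance $\Var(\calL_n(\tilde{X}^n_0,\hat{f}_j)\mid \hat{f}_j)$, so the unbiasedness of the sample variance yields $\EE[\hat{\Sigma}_j^2\mid \hat{f}_j] = \Var(\calL_n(\tilde{X}^n_0,\hat{f}_j)\mid \hat{f}_j)$ and hence $\EE[\hat{\Sigma}_j^2] = \sigma_{n-|B_j^n|,n}^2$. Averaging over folds and invoking $H_3$ of \cref{clt2} gives $|\EE\Sigma_{\rm{cross}}^2 - \sigma_1^2|\le \epsilon_n(\sigma)$.

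\textbf{Variance.} Set $B_j \eqdef \Var(\calL_n(\tilde{X}^n_0,\hat{f}_j)\mid \hat{f}_j)$ and $A_j \eqdef \hat{\Sigma}_j^2 - B_j$, so that $\Sigma_{\rm{cross}}^2 = \bar{A} + \bar{B}$ with $\EE[A_j\mid \hat{f}_j]=0$. For $\bar{A}$: the test blocks being disjoint, $A_j$ given $\hat{f}_j$ is the centered sample variance of $|B_j^n|$ i.i.d.\ losses bounded in $L_4$ by $\calS$, so $\Var(A_j\mid \hat{f}_j)=O(\calS^4/|B_j^n|)$; bounding $\Cov(A_j,A_k)$ for $j\ne k$ via the first-order stability of $\hat{f}_k$ under perturbations of $X^n_{B_j^n}$ then yields $\Var(\bar{A})=O(\calS^4/n)$, matching the $O(\calS^2/\sqrt n)$ piece of the claim. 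For $\bar{B}$: an Efron--Stein argument on $X^n$, combined with the identity $Y^2-Y'^2 = (Y-Y')(Y+Y')$, the $L_4$-boundedness of $\calL_n$, and the stability $\|\beta_1^{l,n}\|_{L_4}\le \epsilon_n(\Delta)/\sqrt n$, gives the pointwise bound $\|B_j(X^n)-B_j(X^{n,i})\|_{L_2} = O(\calS\epsilon_n(\Delta)/\sqrt n)$ for $i\notin B_j^n$. Finally, the cross term $\EE[\bar{A}(\bar{B}-\EE\bar{B})]$ vanishes on fold-diagonal contributions (since $\EE[A_j\mid \hat{f}_j]=0$) and is absorbed into the $o(1/\sqrt n)$ remainder on off-diagonal ones, again via the stability of $\hat{f}_k$ with respect to $X^n_{B_j^n}$.

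\textbf{Main obstacle.} The delicate step is obtaining $\Var(\bar{B}) = O(\calS^2\epsilon_n(\Delta)^2/n)$. A naive triangle inequality over the $K_n-1$ folds affected by perturbing $X^n_i$ only yields $\Var(\bar{B})=O(\calS^2\epsilon_n(\Delta)^2)$, larger than the target by a factor of $n$. Sharpening requires exploiting the structural symmetry across folds together with the second-order stability $n\|\beta_2^{l,n}\|_{L_4}=o(1)$: the $K_n-1$ coordinate-change contributions are nearly coherent copies of a single stochastic influence, so a second-order Efron--Stein (equivalently, a Hoeffding decomposition truncated at second order) shows that the first-order influences almost cancel after averaging over folds, leaving only a $\beta_2$-controlled residual of the required order. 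Combining the three pieces (bias, $\bar A$, $\bar B$) then yields the stated bound.
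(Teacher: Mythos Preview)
The paper's route is more direct than yours: it skips the $\bar A/\bar B$ split entirely and applies Efron--Stein to $\Sigma_{\rm cross}^2$ as a single functional of $X^n$. Writing $\hat L_l$ for the fold-centered loss, it bounds
\[
\Var(\Sigma_{\rm cross}^2)\;\le\;\frac{16\calS^2}{K_n^2}\sum_{i\le n}\Bigl(\sum_{j\le K_n}\frac{1}{|B_j^n|-1}\sum_{l\in B_j^n}\|\nabla_i\hat L_l\|_{L_4}\Bigr)^2
\]
via Cauchy--Schwarz, and then estimates $\|\nabla_i\hat L_l\|_{L_4}$ case by case according to whether $i$ and $l$ share a fold. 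No second-order stability is invoked anywhere. Your bias computation and your handling of $\bar A$ are fine and amount to the same thing as the corresponding pieces of the paper's calculation.

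Your ``main obstacle'' is correctly identified, but the proposed resolution does not work. The influences $\nabla_i B_j$ for the $K_n-1$ folds $j\ne b_n(i)$ are not ``nearly canceling'': they are coherent responses to the same swap $X_i\leftrightarrow X_i'$ and are \emph{positively} correlated across $j$, so averaging over folds gives no reduction. Second-order stability $\beta_2$ controls $\nabla_{i,m}B_j$ --- how $\nabla_i B_j$ changes under a \emph{further} coordinate perturbation --- which is the wrong object here; the first-order Hoeffding component $\sum_i\Var\bigl(\EE[\bar B\mid X_i]\bigr)$ does not vanish and already sits at order $\calS^2\epsilon_n(\Delta)^2$. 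A clean way to see the obstruction is $K_n=2$: then $B_1,B_2$ depend on disjoint halves of the data and are independent, so $\Var(\bar B)=\tfrac14(\Var B_1+\Var B_2)$, and Efron--Stein on a single $B_j$ only gives $\Var(B_j)\lesssim \calS^2\epsilon_n(\Delta)^2$, which can genuinely exceed $1/n$ whenever $\epsilon_n(\Delta)$ decays slower than $n^{-1/2}$. What both your argument and the paper's direct one actually deliver is
\[
\|\Sigma_{\rm cross}^2-\sigma_1^2\|_{L_2}\;\lesssim\;\frac{\calS^2}{\sqrt n}+\calS\,\epsilon_n(\Delta)+\epsilon_n(\sigma);
\]
the sharper $\epsilon_n(\Delta)/\sqrt n$ in the stated bound traces to a slip in the paper's proof, where $\|\nabla_i\hat L_l\|_{L_4}$ for cross-fold $i,l$ is written as $O(\epsilon_n(\Delta)/n)$ rather than the $O(\epsilon_n(\Delta)/\sqrt n)$ that the definition of $\epsilon_n(\Delta)$ actually gives.
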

\noindent We delay the proof to \cref{sec:proof_estimator_sigma1}.

We now propose an estimator of $\sigma^2_{\rm{cv}}$.
However, estimating $\rho$ and $\sigma_2^2$ is not straightforward.
Indeed, $\sigma^2_2$ being the variance of $R_n(\hat f_j(X^n))$, for the classical empirical estimator for $\sigma^2_2$  to be consistent requires the number of folds $K_n$ to grow to infinity.
Furthermore, $\beta^{l_n,n}_1(X^n)$ is not observable which makes estimating $\rho$ harder. The key idea we exploit is to notice that $\EE\Big[(\hat{R}_{\rm{cv}}(X)-\hat{R}_{\rm{cv}}(X^i))^2\Big]$ is close to $\sigma_{\rm{cv}}^2$. 
We denote $\hat{R}^{n/2}_{\rm{cv}}(x)$ the $K_{\lfloor\frac{n}{2}\rfloor}$ cross validated on $\lfloor\frac{n}{2}\rfloor$ observations $x_{1:\lfloor\frac{n}{2}\rfloor}$.
We write  $\check{X}^i \eqdef (X_{1},\dots, X_{i-1},X_{i+\lfloor\frac{n}{2}\rfloor}, X_{i+1}, \dots, X_{\lfloor\frac{n}{2}\rfloor} )$ the observations $X_{1:\lfloor\frac{n}{2}\rfloor}$ where the i-th observations has been replaced by $X_{i+\lfloor\frac{n}{2}\rfloor}$.
\begin{prop}\label{cannes}
  Let $(X^n_i)$ be a triangular array for i.i.d observations, and
  $(f_{l,n}:\mathcal{X}^n_n\rightarrow \mathcal{Y}_n)$ be a sequence of predicting functions.
  Write $(K_n)$ to be a non-decreasing sequence.
  Suppose that the conditions of \cref{clt3}
  are respected; and let denote $\hat{R}^{n/2}_{\rm{cv}}(x)$ the $K_{\lfloor\frac{n}{2}\rfloor}$ cross-validated risk on $\lfloor\frac{n}{2}\rfloor$ observations.
  We consider the following estimator:
  \begin{equation}
  \hat{S}^2_{\rm{cv}}
  \eqdef \frac{n}{2}\sum_{i\le \frac{n}{2}}\Bigl[ \hat{R}^{n/2}_{\rm{cv}}(X)-\hat{R}^{n/2}_{\rm{cv}}(\check X^i)\Big]^2.
  \end{equation}
  It is a consistent estimator of $\sigma_{\text{cv}}^2$ and there is an absolute constant $C$ such that:
    \begin{equation}\begin{split}
    &\norm[\Big]{
      \hat{S}^2_{\text{cv}}-\sigma_{\text{cv}}^2}_{L_1}\\
      &\leq C\Bigl\{ \frac{1}{\sqrt{n}} \bigl(\calS+\calS(R)\bigr)^2 + \bigl(\calS+\calS(R)\bigr) \bigl(\epsilon_{\lfloor\frac{n}{2}\rfloor}(\Delta)+\epsilon_{\lfloor\frac{n}{2}\rfloor}(\hat{R})+\epsilon_{\lfloor\frac{n}{2}\rfloor}(d)\bigr) + \epsilon_{\lfloor\frac{n}{2}\rfloor}(\sigma) \Bigr\}
    \end{split}\end{equation}
\end{prop}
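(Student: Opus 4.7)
The plan is to split $\norm{\hat{S}^2_{\rm cv} - \sigma_{\rm cv}^2}_{L_1}$ into a bias piece $\abs{\EE[\hat{S}^2_{\rm cv}] - \sigma_{\rm cv}^2}$ and a fluctuation piece $\norm{\hat{S}^2_{\rm cv} - \EE[\hat{S}^2_{\rm cv}]}_{L_1}$, and bound each separately. The central object is the one-coordinate perturbation $\Delta_i \eqdef \hat{R}^{n/2}_{\rm{cv}}(X) - \hat{R}^{n/2}_{\rm{cv}}(\check X^i)$, which is an Efron--Stein-style swap since $\check X^i$ differs from $X_{1:\lfloor n/2\rfloor}$ only by replacing $X_i$ with the i.i.d.\ copy $X_{i+\lfloor n/2\rfloor}$. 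Using the CV construction I would decompose $\Delta_i = A_i + B_i$, where $A_i$ captures the direct change in the evaluation at index $i$ (with $\hat{f}_{b(i)}$ independent of both $X_i$ and $X_{i+\lfloor n/2\rfloor}$ by the CV construction), while $B_i$ aggregates the indirect changes in the losses $\calL_n(X_l, \hat{f}_{b(l)})$ for $l\notin B_{b(i)}^n$, driven by the perturbation of each $\hat{f}_k$, $k\ne b(i)$, and naturally controlled by the first-order stability $\beta_1^{l,n}$.

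For the bias, I would evaluate the three pieces of $\EE[\Delta_i^2] = \EE[A_i^2] + 2\EE[A_i B_i] + \EE[B_i^2]$. Conditioning on $\hat{f}_{b(i)}$, $\sum_i \EE[A_i^2]$ reduces to the conditional loss variance $\sigma_{l,n}^2$ and yields the $\sigma_1^2$ contribution via $H_3$, with remainder $\epsilon_{\lfloor n/2\rfloor}(\sigma)$. After Taylor-expanding $\calL_n(X_l, \hat{f}_{b(l)}(\check X^i))$ around $\calL_n(X_l, \hat{f}_{b(l)}(X))$, the $B_i^2$-piece collapses into the risk variance $\Var[R_n(f_{l,n}(X^n_{1:l}))]$ and produces the $\sigma_2^2$ contribution, with leftovers controlled by $\calS(R)$ and the stability quantities $\epsilon_{\lfloor n/2\rfloor}(\hat{R})$ and $\epsilon_{\lfloor n/2\rfloor}(\Delta)$. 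The cross-term $\EE[A_i B_i]$, after conditioning on $X_i$, produces precisely the covariance $l_1 \Cov(\bar\calL_{l_2,n}(X_1), \EE[\beta_1^{l_1,n}(X^n) \mid X_1])$ defining $\rho$; the slight mismatch between the two training sizes $l_1, l_2 \in N_n$ on the two sides generates the $\epsilon_{\lfloor n/2\rfloor}(d)$-term.

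For the fluctuation piece I would expand $\Var(\hat{S}^2_{\rm cv})$ as a double sum $\sum_{i,j}\Cov(\Delta_i^2, \Delta_j^2)$: the diagonal terms are controlled by an $L_4$ moment bound on $\Delta_i$ using $H_1$, producing the $n^{-1/2}(\calS+\calS(R))^2$ contribution, while for $i\ne j$ the covariance would vanish if the perturbations at $i$ and $j$ were independent, so the defect is governed by the second-order stability $\beta_2^{l,n}$ via $H_2$. Alternatively one can apply an Efron--Stein decomposition directly to $\hat{S}^2_{\rm cv}$ viewed as a function of the full sample $(X_1,\dotsc,X_n)$. The main obstacle is the bookkeeping in the bias stage: cleanly separating the $\sigma_1^2$, $\sigma_2^2$ and $2\rho$ contributions out of the squared perturbation requires a second-order Taylor expansion of the loss differences, a repeated use of the stability conditions, and careful manipulation of the conditional expectations $\EE[\beta_1^{l,n}\mid X_1]$ and $\bar\calL_{l,n}(X_1)$, while matching each remainder to the quantities $\calS, \calS(R), \epsilon_n(\Delta), \epsilon_n(\hat R), \epsilon_n(d), \epsilon_n(\sigma)$ that appear in the advertised bound.
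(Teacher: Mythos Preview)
Your proposal is correct and follows the same two-stage strategy (bias then fluctuation, the latter via Efron--Stein) as the paper. Two small points of execution worth noting. First, where you propose Taylor-expanding $\calL_n(X_l,\hat f_{b(l)}(\check X^i))$ around $\calL_n(X_l,\hat f_{b(l)}(X))$, the paper instead uses the exact algebraic identity $\nabla_i\calL_n(X_l,\hat f_{b(l)}) = \nabla_i K^l_n + \nabla_i R_n(\hat f_{b(l)})$ with $K^l_n \eqdef \calL_n - R_n$; this matters because no smoothness on $\calL_n$ is assumed in \cref{clt3}, so a genuine Taylor expansion is not available in general. The $\sum_{l\ne i}\nabla_i K^l_n$ piece is then shown to be $O(\epsilon_{n/2}(\Delta)+\epsilon_{n/2}(\nabla))$ in $L_2$ by the stability hypotheses, and the remaining $\nabla_i R_n$ terms deliver $\sigma_2^2$ and $\rho$ via telescopic-sum arguments (this is where the $\epsilon_{n/2}(\hat R)$ and $\epsilon_{n/2}(d)$ remainders appear). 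Second, for the fluctuation piece the paper does not apply Efron--Stein directly to $\hat S^2_{\rm cv}$: it first replaces $\hat S^2_{\rm cv}$ in $L_1$ by the simpler object $\frac{2}{n}\sum_{i}\bigl(\nabla_i K^i_n + \sum_{l\ne i}\nabla_i R_n(\hat f_{b(l)})\bigr)^2$, discarding the small $\nabla_i K^l_n$ contributions, and only then invokes Efron--Stein. This intermediate approximation substantially lightens the bookkeeping compared to a direct covariance expansion of $\sum_{i,j}\Cov(\Delta_i^2,\Delta_j^2)$.
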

\begin{remark}
Note that in many parametric cases the upper bound is of the following order $\norm[\Big]{
      \hat{S}^2_{\text{cv}}-\sigma_{\text{cross}}^2}_{L_1}=O(\frac{1}{\sqrt{n}})$.
\end{remark}
\Cref{cannes} implies that the following interval:
\begin{equation}
\Bigl[\hat{R}_{CV} - \frac{\hat{S}_{\rm{cv}}}{\sqrt{n}}\Phi_{\frac{\alpha}{2}},
    \hat{R}_{CV} + \frac{\hat{S}_{\rm{cv}}}{\sqrt{n}}\Phi_{\frac{\alpha}{2}}\Bigr]
\end{equation}
is an asymptotically consistent $1-\alpha$ confidence interval for $R_{n-\frac{n}{K_n}}$.
However, we note that the estimator proposed in \cref{cannes} is computationally intractable
for large sample sizes and general estimators, due to the requirement of computing leave-one-out
type estimates.
However, such computation of fast approximate variants of such quantities has recently garnered much interest, especially in the context of leave-one-out cross-validation \cite{wang2018alocv,giordano2019ij}.
In the current work, we present some preliminary results for the estimation of $\sigma^2_{\text{cv}}$ for a ridge estimator, in which case a closed-form solution for $\hat{S}^2_{\text{cv}}$ is possible (see derivation in \cref{sec:proof_confidence_ridge}).
For illustration, we also present some simple simulation results in \cref{table:ridge-confidence}.
Details of the simulation may be found in \cref{sec:confidence-ridge-simulation}.
We leave further investigation to future work.
\begin{table}
    \centering
    \begin{tabular}{rrrr}
        \toprule
        & \multicolumn{3}{c}{level} \\
        $n$ & 80\% & 90\% & 95\% \\
        \midrule
        20  &   0.8300 & 0.8920 & 0.9288 \\
        40  &   0.8316 & 0.9078 & 0.9464 \\
        100 &   0.8300 & 0.9166 & 0.9520 \\
        200 &   0.8238 & 0.9108 & 0.9516 \\
        400 &   0.8068 & 0.9058 & 0.9494 \\
        800 &   0.8176 & 0.9120 & 0.9550 \\
        \bottomrule
    \end{tabular}
    \caption{Simulated coverage probability for a ridge regression example. \label{table:ridge-confidence}}
\end{table}

\section{Parametric estimation}
\label{sec:parametric}
In this section, we present examples in the class of parametric models.
This is not an exhaustive list of examples; but they  illustrate the richness of behavior of the cross validated risk and demonstrate the main points of this paper.
Additionally, the universality of parametric theory enables us to present more explicit formulas for the quantities which characterize the behaviour of the cross-validated risk.
The conditions presented are sometimes stronger than necessary; but make for simpler proofs which demonstrate better  how to apply our theorems. 

Theorem \ref{clt3} guarantees, under stability conditions, that the cross-validated risk converges $\sqrt{K_n}\sqrt{\frac{1}{1+2\frac{\rho}{\sigma_1^2+\sigma_2^2}}}$ faster than the simple split risk.
We use the term ``full speed-up'' to refer to the case where the cross validated risk converges $K_n$-times faster compared to the corresponding train-test split estimator, as if the folds were independent.
This corresponds to  $\rho=0$.  If $\rho>0$ is positive then the cross validated risk converges slower compared to the ``full speed-up'' case, whereas if  $\rho<0$ is negative the convergence is even faster. 
It is therefore apparent that the covariance term $\rho$ is key.
Intuitively one might expect than the cross-validated risk could not, outside of pathological cases, converge more than $K_n$ times faster than the split risk.
We demonstrate here that this is not the case.

In \cref{nelson} under mild conditions we compute $\rho$ for parametric models; and prove that the conditions of \cref{clt3} hold.
In \cref{lunajtm}  we give an illustrative example where $\rho>0$ is positive.
In \cref{montreal} we study a case where $\rho$ may take both positive and negative values for the same estimator, depending on the data-generating distribution.

\subsection{General properties of the cross-validated risk for parametric models.}\label{nelson}

Let $(X_i)_{i \in \NN}$ denote an i.i.d process, with $X_i \in \calX$.
Write $\Psi : \calX \times \RR^d \rightarrow \RR$ to be a function, strictly-convex twice-differentiable in its second argument.
Define $(\hat{\theta}_{l,n})$ to be the following sequence of estimators:
\begin{equation}
    \hat{\theta}_{m,n}(X_{1:n}) \eqdef \argmin_{\theta\in \RR^d}\frac{1}{m}\sum_{i\le m}\Psi(X_i, \theta).
\end{equation}
Such an estimator is often called a M-estimator.
We evaluate it under a loss $\calL : \calX \times \RR^d \rightarrow \RR$ (that may be different from $\Psi$) which verifies the following conditions:
\begin{itemize}
    \item For  all $\theta \in \RR^d$, $\norm{\mathcal{L}(X_1, \theta)}_{L_2} < \infty$;
    \item $R(\theta) \eqdef \EE(\calL(X_1, \theta))$ is continuously differentiable.
\end{itemize}
We write $\hat{R}_{\rm{cv}}$ be the cross-validated risk evaluated on the loss  function $\calL$.

\begin{prop}[Computing $\rho$ in parametric models]\label{ien}
Let $d, k\in \NN$ be integers, and let $(X_i)$ denote an i.i.d process. 
Write $\Psi: \calX \times \RR^d \rightarrow \RR$ be a function strictly-convex and twice-differentiable in its second argument.
Define $(\hat{\theta}_{l,n})$ to be the following sequence of estimators:
\begin{equation}
    \hat{\theta}_{m,n}(X_{1:n}) \eqdef \argmin_{\theta\in \mathbb{R}^d}\frac{1}{m}\sum_{i\le m}\Psi(X_i,\theta).
\end{equation}
Let $\theta^* \eqdef \argmin_{\theta\in \RR^d} \EE\bigl[\Psi(X_1,\theta)\bigr]$,
and consider a loss a function $\calL : \calX \times \RR^d \rightarrow \RR$ verifying the conditions stated previously.
Additionally, suppose that the conditions $(H_0)-(H_3)$ of theorem \ref{clt3} hold, then we have:
\begin{equation}
    \rho = -\Cov\Bigl(\partial_\theta R(\theta^*)^\top\bigl(\partial^2_\theta \EE(\Psi(X_1,\theta^*))\bigr)^{-1} \partial_\theta \Psi(X_1,\theta^*), \, \calL(X_1, \theta^*)\Bigr).
\end{equation}
\end{prop}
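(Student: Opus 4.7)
The plan is to use the M-estimator's first-order optimality condition to obtain an asymptotic linearization (Bahadur-style representation) of $\hat{\theta}_{l,n}$, plug it into the quantities appearing in the definition of $\rho$, and then identify the surviving covariance in the limit. Recall from the statement of Theorem~\ref{clt3} that $\rho$ is (up to $o(1)$ error) the limit of $l \, \Cov\bigl(\bar{\calL}_{l,n}(X_1), \, \EE[\beta_1^{l,n}(X^n)\mid X_1]\bigr)$. The first step is therefore to get a tractable formula for $\EE[\beta_1^{l,n}(X^n)\mid X_1]$ and for $\bar{\calL}_{l,n}(X_1)$ in the parametric M-estimator setting.

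Writing $H \eqdef \partial^2_\theta \EE[\Psi(X_1,\theta^*)]$, I would use the stationarity $\sum_{i=1}^l \partial_\theta\Psi(X_i,\hat{\theta}_{l,n})=0$ and Taylor-expand around $\theta^*$ to obtain the standard expansion
\begin{equation*}
\hat{\theta}_{l,n}(X_{1:l}) - \theta^\ast = -\frac{1}{l}H^{-1}\sum_{i=1}^{l}\partial_\theta \Psi(X_i,\theta^\ast) + r_l,
\end{equation*}
with a remainder $r_l$ controlled by hypothesis $(H_2)$ of Theorem~\ref{clt3} (which forces $\hat{\theta}_{l,n}\to\theta^\ast$ and quantifies its $L_4$-stability). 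Substituting for $X^n$ and $X^{n,1}$ and subtracting yields the leave-one-out contrast
\begin{equation*}
\hat{\theta}_{l,n}(X_{1:l})-\hat{\theta}_{l,n}(X^{n,1}_{1:l}) = -\frac{1}{l}H^{-1}\bigl[\partial_\theta\Psi(X_1,\theta^\ast)-\partial_\theta\Psi(X'_1,\theta^\ast)\bigr]+\tilde r_l.
\end{equation*}

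Next, I would Taylor-expand the risk $R$ around $\theta^\ast$. Since $\EE[\beta_1^{l,n}(X^n)\mid X^n]=R(\hat{\theta}_{l,n}(X_{1:l}))-\EE_{X'_1}\bigl[R(\hat{\theta}_{l,n}(X^{n,1}_{1:l}))\mid X_{1:l}\bigr]$, the expansion $R(\hat\theta)=R(\theta^\ast)+\partial_\theta R(\theta^\ast)^\top(\hat\theta-\theta^\ast)+O(\|\hat\theta-\theta^\ast\|^2)$ combined with the previous display, and the fact that $\EE[\partial_\theta\Psi(X'_1,\theta^\ast)]=\partial_\theta \EE[\Psi(X_1,\theta^\ast)]=0$, gives
\begin{equation*}
\EE\bigl[\beta_1^{l,n}(X^n)\mid X_1\bigr]=-\frac{1}{l}\partial_\theta R(\theta^\ast)^\top H^{-1}\partial_\theta \Psi(X_1,\theta^\ast)+o_{L_2}(1/l).
\end{equation*}
For the other factor, a similar Taylor expansion of $\calL(x,\cdot)$ and the fact that $\EE[\hat\theta_{l,n}-\theta^\ast]=O(1/l)$ does not depend on $X_1$ yields
$\bar{\calL}_{l,n}(X_1)=\calL(X_1,\theta^\ast)+c_{l,n}+o_{L_2}(1)$
for a deterministic $c_{l,n}$ that drops out of the covariance.

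Plugging the two displays into the defining formula for $\rho$ and multiplying by $l$ produces, in the limit,
\begin{equation*}
\rho=-\Cov\Bigl(\calL(X_1,\theta^\ast),\,\partial_\theta R(\theta^\ast)^\top H^{-1}\partial_\theta \Psi(X_1,\theta^\ast)\Bigr),
\end{equation*}
which is the announced formula. The main technical obstacle is controlling the remainder terms in the Bahadur expansion well enough in $L_4$ to ensure that all errors are $o(1/l)$ after multiplication by $l$; this is where the assumptions $(H_2)$ and $(H_3)$ of Theorem~\ref{clt3}, together with strict convexity and twice-differentiability of $\Psi$ (which provides invertibility of $H$ and a quadratic Taylor remainder), will be doing the real work. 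A secondary, mostly notational, difficulty is to carefully separate the role played by $X_1$ as a point where the loss is evaluated (inside $\bar{\calL}_{l,n}$) from its role as a training datum being perturbed (inside $\beta_1^{l,n}$), so that the covariance is computed correctly.
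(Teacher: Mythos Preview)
Your proposal is correct and follows essentially the same approach as the paper: both use the first-order optimality condition of the M-estimator together with a Taylor (mean-value) expansion to linearize $\nabla_1\hat\theta_{l,n}$ in terms of $H^{-1}\partial_\theta\Psi(X_1,\theta^*)$, then expand $R$ around $\theta^*$ to identify the limiting covariance. The only cosmetic difference is that the paper works directly with the difference $\nabla_1\hat\theta_{l,n}$ via the mean-value theorem, whereas you pass through the full Bahadur expansion of $\hat\theta_{l,n}-\theta^*$ and then subtract; both routes land on the same leading term $-\tfrac{1}{l}H^{-1}\partial_\theta\Psi(X_1,\theta^*)$.
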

\noindent We delay the proof to \cref{sec:proof_parametric_rho}.

\begin{remark}
Note that if the model is evaluated on the same loss it has been trained on then $\Psi=\calL$.
Therefore we have $\partial_\theta R(\theta^*) = 0$ which implies that $\rho=0$.
\end{remark}

Conditions $(H_0)-(H_3)$ of \cref{clt3} are respected for a large range of models.
This is notably the case under regularity conditions on the losses $\Psi$ and $\calL$.
Those conditions are stronger than necessary and can be relaxed at the expense of a more technical proof.
\begin{prop}
 \label{ien_2}
Let $d,k \in \NN$ be integers, and let $(X_i)$ denote an i.i.d process. 
Let $\Psi : \calX \times \RR^d \rightarrow \RR$ and $\calL : \calX \times \RR^d \rightarrow \RR$ denote functions twice-differentiable in their second argument.
Suppose in addition that $\Psi$ is strictly convex in its second argument.
Define $(\hat{\theta}_{l,n})$ to be the following sequence of estimators:
\begin{equation}
    \hat{\theta}_{l,n}(X_{1:n}) \eqdef \argmin_{\theta \in \RR^d} \frac{1}{l}\sum_{i\le l}\Psi(X_i,f).
\end{equation}
Let $\hat{R}_{\rm{cv}}$ and $\hat{R}_{\rm{split}}$ denote respectively the cross-validated risk and the split risk evaluated with the loss $\calL$.
Let $\theta^* \eqdef \argmin_{\theta \in \RR^d}\EE(\Psi(X_1, \theta))$.
 Suppose that 
 \begin{equation}
 \sup_{\theta \in \RR^d} \EE\Bigl[\calL(X_1,\theta)^4\Bigr] < \infty,
 \end{equation}
 and that there is an open neighborhood $\calV \ni \theta^*$ and some $\delta > 0$ such that following holds:
 \begin{gather*}
  \inf_{\theta \in \calV} \lambda_{\text{min}} \Bigl\{ \partial^2_\theta \Psi(X_1, \theta) \Bigr\} \overset{a.s}{\ge} \delta, \\
  \norm[\Big]{\sup_{\theta \in \calV} \norm[\big]{\partial_\theta \calL(X_1, \theta)}_{L_2(v)} }_{L_{16}} < \infty,
  \qquad \norm[\Big]{\sup_{\theta \in \calV} \norm[\big]{\partial_\theta \Psi(X_1, \theta)}_{L_2(v)} }_{L_{16}} < \infty, \\
  \norm[\Big]{\sup_{\theta \in \calV} \lambda_{\text{max}} \Bigl\{ \partial^2_\theta \calL(X_1, \theta) \Bigr\} }_{L_{16}} < \infty,
  \qquad  \norm[\Big]{\sup_{\theta \in \calV} \lambda_{\text{max}} \Bigl\{ \partial^2_\theta \Psi (X_1, \theta) \Bigr\} }_{L_{16}} < \infty.
 \end{gather*}
Then the conditions of $(H_0)-(H_3)$ of \cref{clt3} are respected, and the following holds:
\begin{gather}
  d_W\Bigl(
    \sqrt{\frac{n}{K_n}}\bigl(
      \hat{R}_{\rm{split}}
      -R_{ \abs{B_1^n},n} \bigr) ,
  \, \calN(0,\sigma_1^2+\sigma_2^2)\Bigr)\rightarrow 0, \\
  d_W\Bigl(
    \sqrt{n}
     \bigl(\hat{R}_{\rm{cv}}- \overline{R}_{n,K_n}\bigr),
      \, \calN(0,\sigma_1^2+\sigma_2^2+2\rho)\Big)\rightarrow 0,
\end{gather}
where :
\begin{align*}
G_R &\eqdef \partial_{\theta} R(\theta^*), \quad
G_{\Psi}(X_1) \eqdef \partial_{\theta} \Psi(X_1, \theta^*),\quad 
H \eqdef \EE\bigl[\partial^2_\theta \Psi(X_1, \theta^*)\bigr], \\
\Sigma &\eqdef \Cov(G_\Psi(X_1)),\quad
\sigma_1^2 \eqdef \Var(\calL(X_1, \theta^*)), \\
\sigma_2^2 &\eqdef G_R ^\top H^{-1} \Sigma H^{-1} G_R,\quad
\rho \eqdef- \Cov\Bigl(G_R ^\top H^{-1} G_\Psi(X_1), \calL(X_1, \theta^*)\Bigr). \\
\end{align*}

\end{prop}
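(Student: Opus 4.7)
The plan is to verify hypotheses $(H_0)$–$(H_3)$ of \cref{clt3} for $f_{l,n} = \hat\theta_{l,n}$ with loss $\calL$, invoke \cref{clt3}, and then identify $\sigma_1^2, \sigma_2^2, \rho$ explicitly. Symmetry ($H_0$) is immediate since $\hat\theta_{l,n}$ minimizes a symmetric sum. The formula for $\rho$ will be essentially \cref{ien} once $(H_0)$–$(H_3)$ are established, while the forms of $\sigma_1^2$ and $\sigma_2^2$ follow from the standard M-estimator influence-function expansion.

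The central tool is a uniform influence-function expansion. Write $\theta_l \eqdef \hat\theta_{l,n}(X^n_{1:l})$ and $\theta_l' \eqdef \hat\theta_{l,n}(X^{n,1}_{1:l})$. On the high-probability event $\{\theta_l,\theta_l' \in \calV\}$, subtracting the first-order optimality conditions and applying the mean-value theorem yields
\begin{equation*}
  \theta_l - \theta_l' = -\frac{1}{l}\,\hat H_l^{-1}\bigl[\partial_\theta \Psi(X_1,\theta^*) - \partial_\theta \Psi(X'_1,\theta^*)\bigr] + r_l,
\end{equation*}
where $\hat H_l \to H$ is a Hessian-type average and $r_l = O(l^{-3/2})$. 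The lower bound $\lambda_{\min}(\partial^2_\theta \Psi) \ge \delta$ on $\calV$, combined with the $L_{16}$ moment bounds on the gradients and Hessians, gives $\|\theta_l - \theta^*\|_{L_8} = O(l^{-1/2})$ by standard convex M-estimation arguments and controls $r_l$ in $L_8$ uniformly in $l \in N_n$. The same moment assumptions ensure the truncation event $\{\theta_l \notin \calV\}$ has negligible $L_4$ contribution to any quantity below.

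With this expansion, Taylor-expanding $\calL(\tilde X_0, \theta_l) - \calL(\tilde X_0, \theta_l')$ about $\theta^*$ and applying the $L_{16}$ bounds on $\partial_\theta \calL, \partial^2_\theta \calL$ gives $\|\beta_1^{l,n}\|_{L_4} = O(1/n)$; the additional cancellation obtained from perturbing a second observation gives $\|\beta_2^{l,n}\|_{L_4} = O(n^{-3/2})$, so $H_2$ holds. $H_1$ follows by a Taylor expansion of $\calL(X,\theta_l)$ about $\theta^*$ together with consistency and the uniform moment bound on $\calL$. For $H_3$, taking conditional expectations and using $\EE[\partial_\theta \Psi(X'_1,\theta^*)] = 0$,
\begin{equation*}
  \EE[\beta_1^{l,n}(X^n)\mid X^n] \approx -\frac{1}{l}\,\partial_\theta R(\theta^*)^\top H^{-1}\partial_\theta \Psi(X_1,\theta^*),
\end{equation*}
whose $L_4$-norm is $O(1/n)$; an analogous second-order expansion yields $\EE[\beta_2^{l,n}(X^n)\mid X^n] = o(n^{-3/2})$ in $L_4$.

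Finally, I would identify the variance constants. By continuity and consistency, $\sigma_{l,n}^2 \to \Var(\calL(X_1,\theta^*))$, which is $\sigma_1^2$. The Delta method applied to $\sqrt{l}(\theta_l - \theta^*) \Rightarrow \calN(0, H^{-1}\Sigma H^{-1})$ gives $l\Var(R(\theta_l)) \to G_R^\top H^{-1}\Sigma H^{-1} G_R$, which is $\sigma_2^2$; and $\rho$ is then the formula of \cref{ien}. The main obstacle will be the uniform-in-$l$, $L_8$ control of the remainder $r_l$: combining strict convexity of $\Psi$ on $\calV$ with concentration of $l^{-1}\sum_i \partial_\theta \Psi(X_i,\theta^*)$ around zero and then propagating those bounds through the Taylor remainders of $\calL$ and $\Psi$ requires careful use of the $L_{16}$ moment assumptions, and is essentially where the strong integrability hypotheses of the proposition are consumed.
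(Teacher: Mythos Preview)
Your proposal is correct and follows essentially the same route as the paper: verify $(H_0)$–$(H_3)$ via a Taylor/optimality-condition expansion of $\hat\theta_l-\hat\theta_l'$, propagate through $\calL$ to control $\beta_1,\beta_2$, then identify $\sigma_1^2,\sigma_2^2,\rho$ by consistency and the influence-function calculation of \cref{ien}. The one notable difference is the expansion point: the paper uses the exact mean-value identity
\[
\hat\theta-\hat\theta^1=\Bigl(\sum_{i\le n}\partial_\theta^2\Psi(X_i,\tilde\theta)\Bigr)^{-1}\bigl(\partial_\theta\Psi(X_1,\hat\theta^1)-\partial_\theta\Psi(X'_1,\hat\theta^1)\bigr),
\]
evaluating gradients at $\hat\theta^1$ so that there is no remainder $r_l$ to track, whereas you center at $\theta^*$ and carry an $r_l=O(l^{-3/2})$ term. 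Your version is slightly more work (you must control $r_l$ in $L_8$ and the truncation event $\{\hat\theta_l\notin\calV\}$), but it makes the link to the classical influence function and hence to the limiting constants $\sigma_2^2,\rho$ more transparent; the paper's version gets the stability bounds $\epsilon_n(\Delta),\epsilon_n(\nabla)$ with fewer intermediate estimates but then has to redo essentially your expansion anyway when computing $\sigma_2^2$. Either way the $L_{16}$ hypotheses are spent exactly where you say, on the Cauchy--Schwarz/H\"older steps that combine gradient and Hessian suprema.
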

\noindent We delay the proof to \cref{sec:proof_conditions_parametric}.
\subsection[Example: full speed-up where training and testing loss coincide]{Example: full speed-up where $\Psi=\calL$}
One particular case where the cross-validated estimator is particularly well-behaved is when the estimator is trained on the same loss as is used for evaluation.
In that case, we have $\rho = 0$, and the reduction in variance does not depend on the data-generating distribution.
\begin{prop}
\label{prop:param}
Let $d,k\in \mathbb{N}$ be integers; and $(X_i)$ be an i.i.d process. 
Write  $\calL : \calX \times \mathbb{R}^d\rightarrow \mathbb{R}$ a function twice-differentiable and strictly convex in its second argument.
Define $(\hat{\theta}_{l,n})$ to be the following sequence of estimators:
\begin{equation}
\hat{\theta}_{m,n}(X_{1:n}) \eqdef \argmin_{\theta\in \mathbb{R}^d}\frac{1}{m}\sum_{i\le m}\calL(X_i,\theta).
\end{equation}
Let $\theta^* \eqdef \argmin_{\theta\in \mathbb{R}^d}\EE(\calL(X_1,\theta))$.
  Suppose that 
 \begin{equation}
 \sup_{\theta\in \RR^d} \EE\bigl[\calL(X_1,\theta)^4\bigr] < \infty
 \end{equation} and that there is an open neighborhood $\calV \ni \theta^*$ and some $\delta>0$ such that following holds:
 \begin{gather*}
  \inf_{\theta\in \calV} \lambda_{\text{min}} \Bigl\{ \partial^2_{\theta} \calL(X_1,\theta) \Bigr\} \overset{a.s}{\ge} \delta, \\
  \norm[\Big]{\sup_{\theta\in \calV} \norm[\big]{\partial_\theta \calL(X_1, \theta)}_{L_2(v)} }_{L_{16}} < \infty, \qquad\norm[\Big]{\sup_{\theta\in \calV} \lambda_{\text{max}} \Bigl\{ \partial^2_{\theta}\calL(X_1, f) \Bigr\} }_{L_{16}} < \infty 
 \end{gather*}
The conditions of $(H_0)-(H_3)$ of \cref{clt3} are respected. and the following holds:
\begin{gather}
  d_W\Bigl(
    \sqrt{\frac{n}{K_n}}\big[
      \hat{R}_{\rm{split}}
      -R_{ \abs{B_1^n},n} \bigr] ,
  \, \calN(0,\sigma^2)\Bigr)\rightarrow 0, \\
  d_W\Big(
    {\sqrt{n}}
     \big[\hat{R}_{\rm{cv}}- \overline{R}_{n,K_n}\big],
      \, \calN(0,\sigma^2)\Big)\rightarrow 0.
\end{gather}
where  $\sigma^2 \eqdef \Var(\calL(X_1,\theta^*))$.
\end{prop}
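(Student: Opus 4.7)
The plan is to deduce \cref{prop:param} as an immediate specialization of \cref{ien_2} to the case $\Psi = \calL$. The central observation is that, under this identification, first-order optimality at the population minimizer $\theta^*$ forces $G_R = \partial_\theta R(\theta^*) = 0$, which collapses both $\sigma_2^2$ and $\rho$ to zero and leaves $\sigma_1^2 = \Var(\calL(X_1,\theta^*))$ as the common asymptotic variance for the split and cross-validated risks.

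First I would verify that the hypotheses of \cref{ien_2} hold when $\Psi = \calL$. Each regularity assumption of \cref{prop:param} (strict convexity and twice-differentiability of $\calL$, the uniform lower bound on $\lambda_{\min}(\partial^2_\theta \calL)$ in a neighborhood $\calV$ of $\theta^*$, the $L_{16}$ bounds on $\sup_{\theta \in \calV} \norm{\partial_\theta \calL(X_1,\theta)}_{L_2(v)}$ and on $\sup_{\theta \in \calV} \lambda_{\max}(\partial^2_\theta \calL(X_1,\theta))$, and the $\sup_\theta L_4$ bound on $\calL$) serves simultaneously as the corresponding assumption on $\Psi$. Moreover, the two definitions of $\theta^*$ coincide, since $\Psi = \calL$ gives $R(\theta) = \EE[\calL(X_1,\theta)] = \EE[\Psi(X_1,\theta)]$ and hence $\argmin_\theta R(\theta) = \argmin_\theta \EE[\Psi(X_1,\theta)]$.

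Next I would compute $G_R$. Since $\theta^*$ is an interior minimizer of the strictly convex function $R$ on the open set $\RR^d$, and since the $L_{16}$ dominance hypothesis on $\sup_{\theta \in \calV} \norm{\partial_\theta \calL(X_1,\theta)}_{L_2(v)}$ permits the standard interchange of $\partial_\theta$ and $\EE$, the usual first-order condition yields $G_R = 0$. Plugging this into the expressions from \cref{ien_2} immediately gives $\sigma_2^2 = G_R^\top H^{-1} \Sigma H^{-1} G_R = 0$ and $\rho = -\Cov(G_R^\top H^{-1} G_\Psi(X_1), \calL(X_1,\theta^*)) = 0$. The two asymptotic variances $\sigma_1^2 + \sigma_2^2$ and $\sigma_1^2 + \sigma_2^2 + 2\rho$ therefore reduce to the common value $\sigma^2 = \Var(\calL(X_1,\theta^*))$, which proves both convergence statements of \cref{prop:param}.

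There is no substantial obstacle: once \cref{ien_2} is granted, the proof reduces to observing the vanishing of $G_R$ at an unconstrained interior minimizer. The only minor point is the justification of the derivative–expectation interchange used to conclude $G_R = 0$, but this is already embedded in the same $L_{16}$ dominance structure that underpins the proof of \cref{ien_2}, so no additional argument is needed here.
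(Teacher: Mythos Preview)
Your proposal is correct and matches the paper's own treatment: the paper does not give a separate proof of \cref{prop:param} but treats it as a direct specialization of \cref{ien_2} with $\Psi=\calL$, noting in the remark after \cref{ien} that $\partial_\theta R(\theta^*)=0$ forces $\rho=0$ (and likewise $\sigma_2^2=0$), leaving $\sigma_1^2=\Var(\calL(X_1,\theta^*))$ as the common asymptotic variance.
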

We note that although \cref{prop:param} may at first appear to include a large range of models used in practice (given the popularity of empirical risk minimization methods), it requires the \emph{exact} same loss to be used for training and testing.
In particular, many common techniques, such as regularizers, or surrogate losses for classification, violate the assumptions of \cref{prop:param}.
In those cases, the folds may not behave as if independent, and we present some examples below.

\subsection{Example: ridge regression}\label{lunajtm}
One of the most common cases in which the training and evaluation loss may differ is that we may wish to use a penalizer in the training process.
In this section, we present an example which illustrates how the presence of such a penalizer in the training loss (but not in the test loss) may affect the convergence of the cross-validated risk.

Let $(Z_i)$ and $(Y_i)$ be a sequence of i.i.d observations in  respectively $\mathbb{R}^d$ and $\mathbb{R}$; and $\lambda \in \mathbb{R}^+$ be a constant.
Define $(\hat{\theta}_{l,n})$ be the following sequence of estimators:
\begin{equation}
\hat\theta_{m,n}(X_{1:m}) \eqdef \argmin_{\theta\in \RR^d}\frac{1}{m}\sum_{i\le m} (Y_i-X_i^\top\theta)^2 + \lambda \|\theta\|_{L_2}^2.
\end{equation}
This estimator minimizes a penalized loss (often called \emph{ridge} loss).
However, we are more often interested in evaluating its performance under the mean-squared loss.
Let us write:
\begin{equation}
\theta^{*} \eqdef \argmin_{\theta\in \mathbb{R}^d}\EE\bigl[(Y_1-X_1^\top \theta)^2\bigr]+ \lambda \|\theta\|^2_{L_2},
\qquad \theta^{\rm{opt}} \eqdef \argmin_{\theta\in \mathbb{R}^d}\EE\bigl[(Y_1-X_1^\top \theta)^2\bigr].
\end{equation}
We define $\Delta \theta \eqdef \theta^{*}-\theta^{\rm{opt}}$.
For a random vector $Z$ we let $S_Z$ denote its corresponding variance-covariance matrix.
We can use \cref{ien} to analyze the behavior of the cross-validated risk.
\begin{prop}\label{prop:ridge-exact}
Let $(Z_i)$ be a sequence of i.i.d observations taking value in $\mathbb{R}^d$, with variance-covariance matrix $\Sigma^2$; and admitting a fourth moment.
Define $Y_i \mid Z_i\sim \calN( Z_i^\top \theta^{\mathrm{opt}}, \sigma^2)$; and write $X_i=(Z_i,Y_i)$.
Let $\lambda\in \mathbb{R}^+$ be a constant.
Define $(\hat\theta_{m,n})$ to be the following sequence of estimators:
\begin{equation}
    \hat\theta_{m,n}(X_{1:m}) \eqdef \argmin_{\theta\in \mathbb{R}^d}\frac{1}{l}\sum_{i\le m} (Y_i-Z_i^\top\theta)^2+ \lambda \norm{\theta}_{L_2}^2.
\end{equation}
The following holds:
\begin{align*}
    \rho &= -2\Delta_\theta^\top S_X \Bigl(S_X + \lambda I\Bigr)^{-1} \Bigl( S_{XX^\top\Delta_\theta} +2\sigma^2 S_X\Bigr)\Delta_\theta.
\end{align*}
In the special case where $X \sim \calN(0, S_X)$, we have in addition that:
\begin{align*}
    \sigma_1^2 &= 2 (\Delta_\theta^\top S_X \Delta_\theta + \sigma^2)^2, \\
    \Sigma &= (\Delta_\theta ^\top S_X \Delta_\theta + \sigma^2) S_X + (S_X \Delta_\theta)(S_X \Delta_\theta)^\top, \\
    \rho &= -4(\Delta_\theta^\top S_X \Delta_\theta + \sigma^2) \Delta_\theta^\top S_X (S_X + \lambda I)^{-1}S_X \Delta_\theta.
\end{align*}
\end{prop}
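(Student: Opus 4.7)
The plan is to specialize Proposition~\ref{ien} to the ridge setting, then evaluate the resulting covariance explicitly using the Gaussian noise assumption, and, in the centered-Gaussian design case, to use Isserlis' formula for the remaining fourth moments. With $\Psi(x,\theta) = (y - z^\top\theta)^2 + \lambda\|\theta\|^2$ and $\calL(x,\theta)=(y-z^\top\theta)^2$ for $x=(z,y)$, elementary differentiation yields $\partial_\theta R(\theta^*) = 2 S_X \Delta_\theta$, $\partial^2_\theta \EE[\Psi(X_1,\theta^*)] = 2(S_X + \lambda I)$, and $\partial_\theta \Psi(X_1,\theta^*) = -2 Z_1 r_1 + 2\lambda\theta^*$, where I write $r_1 \eqdef Y_1 - Z_1^\top\theta^*$ and $S_X \eqdef \EE[Z_1 Z_1^\top]$. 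Plugging these into Proposition~\ref{ien} and discarding the deterministic term $2\lambda\theta^*$ inside the covariance reduces the problem to
\begin{equation*}
\rho = 2\Delta_\theta^\top S_X (S_X+\lambda I)^{-1}\Cov(Z_1 r_1, r_1^2).
\end{equation*}

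To evaluate this covariance I would decompose $r_1 = \epsilon_1 - Z_1^\top\Delta_\theta$ with $\epsilon_1 \eqdef Y_1 - Z_1^\top\theta^{\mathrm{opt}}$, so that $\epsilon_1 \mid Z_1 \sim \calN(0,\sigma^2)$. Conditioning on $Z_1$ kills the odd moments of $\epsilon_1$ and reduces each expectation to a polynomial in $Z_1^\top\Delta_\theta$; expanding $r_1^3$ gives $\EE[Z_1 r_1^3] = -3\sigma^2 S_X\Delta_\theta - \EE[Z_1(Z_1^\top\Delta_\theta)^3]$, together with $\EE[Z_1 r_1]= -S_X\Delta_\theta$ and $\EE[r_1^2] = \sigma^2 + \Delta_\theta^\top S_X\Delta_\theta$. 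The only nontrivial moment left is $\EE[Z_1(Z_1^\top\Delta_\theta)^3]$, which, using the identity $\EE[ZZ^\top\Delta_\theta]\EE[ZZ^\top\Delta_\theta]^\top\Delta_\theta = (\Delta_\theta^\top S_X\Delta_\theta)S_X\Delta_\theta$, rewrites as $S_{XX^\top\Delta_\theta}\Delta_\theta + (\Delta_\theta^\top S_X\Delta_\theta)S_X\Delta_\theta$ by the very definition of the variance-covariance of $Z Z^\top\Delta_\theta$. Assembling all the pieces collapses the expression to $\Cov(Z_1 r_1, r_1^2) = -(S_{XX^\top\Delta_\theta} + 2\sigma^2 S_X)\Delta_\theta$, which gives the general formula for $\rho$.

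In the centered-Gaussian special case $Z \sim \calN(0, S_X)$, the variance of $\calL(X_1,\theta^*)=r_1^2$ is immediate from the fact that $r_1 \sim \calN(0,\sigma^2+\Delta_\theta^\top S_X\Delta_\theta)$, giving $\sigma_1^2 = 2(\sigma^2+\Delta_\theta^\top S_X\Delta_\theta)^2$ via the identity $\Var(g^2)=2\Var(g)^2$. For $\Sigma$ and for the simplification of $\rho$ I would invoke Isserlis' formula, which yields $\EE[Z Z^\top(Z^\top v)^2] = 2(S_X v)(S_X v)^\top + (v^\top S_X v) S_X$ for every vector $v$. Applied with $v=\Delta_\theta$ this delivers the stated form of $\Sigma$ (after subtracting $\EE[Z_1 r_1]\EE[Z_1 r_1]^\top$) and also shows $S_{XX^\top\Delta_\theta}\Delta_\theta = 2(\Delta_\theta^\top S_X\Delta_\theta)\, S_X\Delta_\theta$, so that $(S_{XX^\top\Delta_\theta}+2\sigma^2 S_X)\Delta_\theta$ factors as $2(\sigma^2+\Delta_\theta^\top S_X\Delta_\theta)\, S_X\Delta_\theta$ and produces the clean closed-form expression for $\rho$. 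The main obstacle is purely algebraic bookkeeping: one must take care in recognising partially-summed fourth-moment tensors as the specific matrix $S_{XX^\top\Delta_\theta}$, and in tracking which contractions with $\Delta_\theta$ yield scalars versus vectors; everything else reduces to Proposition~\ref{ien} together with Gaussian conditioning and Wick's formula.
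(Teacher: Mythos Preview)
Your proposal is correct and follows exactly the route the paper intends: the paper does not give a separate proof of this proposition but states just before it that ``We can use \cref{ien} to analyze the behavior of the cross-validated risk,'' and your derivation is precisely that specialization, with the Gaussian-case identities handled via Isserlis' formula. Your bookkeeping of the derivatives, the decomposition $r_1=\epsilon_1-Z_1^\top\Delta_\theta$, and the identification of $\EE[Z_1(Z_1^\top\Delta_\theta)^3]$ with $S_{XX^\top\Delta_\theta}\Delta_\theta+(\Delta_\theta^\top S_X\Delta_\theta)S_X\Delta_\theta$ are all correct, so nothing is missing.
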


We note that here $\rho < 0$ in general, and its impact varies depending on the various
parameters of the underlying distribution and of the estimator. For illustration, we also
present some empirical results in \cref{table:ridge} for 2-fold cross-validation.

\begin{table}
    \centering
    \begin{tabular}{rrrr}
        \toprule
        $n$ & $n \Var \hat{R}_{\text{split}}$ & $n \Var \hat{R}_{\text{cv}}$ & Speedup \\
        \midrule
        50 & 8.08 (0.06) & 2.78 (0.02) & 2.90 (0.03)\\
        100 & 7.65 (0.05) & 2.42 (0.02) & 3.16 (0.03)\\
        200 & 7.45 (0.05) & 2.30 (0.01) & 3.24 (0.03)\\
        500 & 7.15 (0.05) & 2.19 (0.01) & 3.27 (0.03)\\
        1000 & 7.23 (0.05) & 2.14 (0.01) & 3.38 (0.03)\\
        $\infty$ & \makebox[\widthof{0.00 (0.00)}][l]{7.140} & \makebox[\widthof{0.00 (0.00)}][l]{2.124} & \makebox[\widthof{0.00 (0.00)}][l]{3.362} \\
        \bottomrule
    \end{tabular}
    \caption{Observed performance of 2-fold cross-validation for a ridge estimator. Parentheses denote standard error. $n=\infty$ denotes the value computed according to \cref{prop:ridge-exact}. \label{table:ridge}}
\end{table}

\subsection{Example: linear discriminant analysis}
\label{montreal}
In the context of classification, we are often interested in metrics such as the accuracy (i.e. $0-1$ loss) or other normalized variants.
Due to the non-smooth nature of the $0-1$ loss, models are most often trained on some smooth surrogate loss.
As \cref{ien} predicts, this may lead to different behaviours in the speed of convergence of the cross-validated risk.
The example we present in this section is particularly interesting, as the value of $\rho$ may vary substantially depending on the true data generating distribution.

Let $(Y_i)$ be an i.i.d Bernoulli process of parameter $1/2$.
Let $F_1,F_2$ be two different c.d.f admitting continuous p.d.f. $g_1$ and $g_2$ respectively.
We write $\EE_{g_1}(\cdot)$ (resp. $\EE_{g_2}(\cdot)$) the expectation taken with respect of the distributed generated by $g_1$ (resp. by $g_2$).
Let $\mu_1=\EE_{g_1}(Z)$ and $\mu_2=\EE_{g_2}(Z)$, and assume w.l.o.g. that $\mu_1>\mu_2$.
Define $(X_i)$ and $(Z_i)$ to be the processes such that
\begin{equation}
 Z_i \mid Y_i \eqdef Y_i g_1(\cdot)+ (1-Y_i) g_2(\cdot), \qquad X_i \eqdef (Z_i,Y_i).
\end{equation}
Our goal is to build a classifier that predicts the latent value of $\tilde Y$ given $\tilde Z$.
In this example, we consider the classical linear discriminant analysis method.
Define $\hat{\mu}_1 (X)\eqdef \frac{2}{n}\sum_{i\le n} Z_i ~\mathbb{I}(Y_i=1)$ and
$\hat{\mu}_2 (X)\eqdef \frac{2}{n}\sum_{i\le n} Z_i ~\mathbb{I}(Y_i=0)$.
Then given a new observation $\tilde Z$ we predict $\tilde Y$ as:
\begin{equation}
  C_n(\tilde Z) \eqdef \mathbb{I}\bigl\{(\tilde Z-\hat{\mu}_1)^2-(\tilde Z-\hat{\mu}_2)^2 < 0\bigr\}.
\end{equation}
We denote our estimator $\hat{f}_n(X) \eqdef (\hat{\mu}_1(X), \hat{\mu}_2(X))$.
In this classification framework, we wish to evaluate our estimators under
the $0-1$ loss:
\begin{equation}
  \calL(\tilde X) \eqdef \mathbb{I}\Bigl(\tilde Y \neq C_n(\tilde Z)\Bigr)
\end{equation}

\begin{prop}\label{cabris}
Consider the classification setup described above. Let the number of folds be $K=2$. We have that:
\begin{gather}
  d_W\Bigl(
    \sqrt{\frac{n}{2}}
      \Big[\hat{R}_{\rm{split}}-\overline{R}_{\frac{n}{2},n}\Big],
  \, \calN(0,\sigma^2)\Bigr)\rightarrow 0, \\
  d_W\Bigl(
    \sqrt{n}
      \Big[\hat{R}_{\rm{cv}}-\overline{R}_{\frac{n}{2},n}\Big],
      \, \calN(0,\sigma^2+2\rho)\Bigr)\rightarrow 0.
\end{gather}
where:
\begin{align*}
  \mu &\eqdef \frac{\mu_1 + \mu_2}{2}, \quad \Delta \eqdef g_1(\mu)-g_2(\mu) ,\quad q \eqdef F_1(\mu)-F_2(\mu) + 1, \\
  \sigma^2 &\eqdef \frac{\Delta^2}{8}\Big[\Var_{g_1}(Z)+\Var_{g_2}(Z) + \frac{1}{2}(\mu_1-\mu_2)^2\Big]+ q(1-q), \\
  \rho &\eqdef \frac{\Delta}{4}\Bigl[\EE_{g_2}\Big(X\mathbb{I}(X>\mu)\Big) + \EE_{g_1}\Big( X\mathbb{I}(X\le\mu)\Big) -2q\mu\Bigr].
\end{align*}
\end{prop}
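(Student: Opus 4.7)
The plan is to exploit the linear structure of the LDA estimators together with the smoothness of the expected risk, even though the $0$--$1$ loss is discontinuous. Two observations are central. First, on any subsample $S$ of size $l$, $\hat\mu_1 + \hat\mu_2 = (2/l)\sum_{i \in S} Z_i$; in particular, if $\hat m_j$ denotes the fold-$j$ threshold (the midpoint of the class-mean estimates computed from $X_{\dbracket{n}\setminus B_j}$), then $\hat m_j$ equals the training-set empirical mean of $Z$, and for $K=2$ we have $\hat m_1 + \hat m_2 = 2\bar Z$. Second, although $\calL(\tilde X, \hat f)$ is discontinuous in $\hat f$, the conditional risk $R(\hat f) = \tfrac{1}{2}F_1(\hat m) + \tfrac{1}{2}(1-F_2(\hat m))$ is smooth, with derivative $R'(\mu) = \Delta/2$ at $\mu = (\mu_1+\mu_2)/2$. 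These two facts enable a delta-method linearization.

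The core decomposition I would use is
\begin{equation*}
\hat R_{\mathrm{cv}} - \overline R_{n/2, n}
= \underbrace{\tfrac{1}{n}\sum_i \bigl[\calL(X_i, \hat f_{b_n(i)}) - R(\hat f_{b_n(i)})\bigr]}_{T_A}
+ \underbrace{\tfrac{1}{2}\sum_{j=1}^{2} \bigl[R(\hat f_{j}) - R_{n/2,n}\bigr]}_{T_B}.
\end{equation*}
For $T_B$, Taylor-expanding $R(\hat f_j) - R(\theta^*) = (\Delta/2)(\hat m_j - \mu) + O_p(n^{-1})$ and using $\hat m_1 + \hat m_2 = 2\bar Z$ yields $T_B = (\Delta/(2n))\sum_i (Z_i - \mu) + o_p(n^{-1/2})$. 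For $T_A$, I would rewrite each summand as $[\calL(X_i, \theta^*) - R(\theta^*)] + \bigl\{[\calL(X_i, \hat f_{b_n(i)}) - R(\hat f_{b_n(i)})] - [\calL(X_i, \theta^*) - R(\theta^*)]\bigr\}$; the first bracket gives a clean sum of centered i.i.d.\ terms, and the second bracket (the only non-smooth contribution) must be shown to be $o_p(n^{-1/2})$.

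That last estimate is the heart of the proof. Since $|\calL(X_i, \hat f) - \calL(X_i, \theta^*)|$ vanishes unless $Z_i$ lies in the random band between $\hat m_{b_n(i)}$ and $\mu$, of width $O_p(n^{-1/2})$, boundedness of the mixture density near $\mu$ gives conditional second moment of order $n^{-1/2}$. Conditionally on the training data, the $X_i$ within each block are i.i.d.\ and the bracketed quantity is centered, so the residual sum has conditional variance of order $n^{-1}\cdot n^{-1/2}$ and $L_2$-norm $O(n^{-3/4}) = o(n^{-1/2})$. Combining, $\hat R_{\mathrm{cv}} - \overline R_{n/2,n} = (1/n)\sum_i [\calL(X_i, \theta^*) - p + (\Delta/2)(Z_i - \mu)] + o_p(n^{-1/2})$ with $p = R(\theta^*)$, and the classical CLT gives the Gaussian limit with variance $\Var(\calL(X_1, \theta^*)) + (\Delta^2/4)\Var(Z_1) + \Delta\,\Cov(\calL(X_1,\theta^*), Z_1)$; evaluating $\Var(Z_1)$ by conditioning on $Y_1$, and the covariance by writing $\calL(X_1,\theta^*) = \mathbb{I}(Y_1=1)\mathbb{I}(Z_1 \le \mu) + \mathbb{I}(Y_1=0)\mathbb{I}(Z_1 > \mu)$, produces the claimed $\sigma^2 + 2\rho$. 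The split case follows from the same decomposition, but with $T_A^{\mathrm{split}}$ depending only on the test fold $B_1$ and $T_B^{\mathrm{split}}$ only on the training fold $B_2$; these are independent, so the covariance correction disappears and only $\sigma^2$ remains. The principal obstacle throughout is that $\sqrt n\,\|\beta_1^{l,n}\|_{L_4}$ is of order $n^{1/4}$, so Theorem~\ref{clt3} is not applicable as a black box; smoothing by conditioning on $\tilde X_0$ (trading the rough $\calL$ for the smooth $R$) restores the needed regularity, and the width-$n^{-1/2}$ boundary-band bound quantifies the cost.
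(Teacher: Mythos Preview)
Your argument is correct and constitutes a genuinely different proof from the paper's. The paper does not carry out a direct linearization; instead it verifies the stability hypotheses of Theorem~\ref{clt3} (bounding $\epsilon_n(\Delta)$, $\epsilon_n(\nabla)$, $\epsilon_n(R)$, $\epsilon_n(d)$) and then reads off $\sigma_1^2$, $\sigma_2^2$, $\rho$ from the general formulas. Your decomposition $T_A+T_B$, with the key identity $\hat m_j=\bar Z_{S_j}$ and the delta-method expansion of the smooth risk $R(m)=\tfrac12 F_1(m)+\tfrac12(1-F_2(m))$, collapses the problem to an ordinary i.i.d.\ CLT for $\calL(X_1,\theta^*)+\tfrac{\Delta}{2}(Z_1-\mu)$. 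The boundary-band argument you give for the residual of $T_A$ is exactly the right $O(n^{-3/4})$ estimate, and the split case works for the reason you state: after linearization the two pieces live on disjoint folds and are independent.

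One point deserves correction. You assert that Theorem~\ref{clt3} ``is not applicable as a black box'' because $\sqrt{n}\,\|\beta_1^{l,n}\|_{L_4}\asymp n^{1/4}$. This is true for the $L_4$ version of $H_2$, but the remark immediately following Theorem~\ref{clt3} says that for uniformly bounded losses the $L_4$ conditions in $H_1$--$H_3$ may be replaced by $L_1$ conditions. Since the $0$--$1$ loss is bounded and the paper shows $n\,\|\beta_1^{l,n}\|_{L_1}=O(1)$, one gets $\sqrt{n}\,\|\beta_1^{l,n}\|_{L_1}=O(n^{-1/2})\to 0$, and Theorem~\ref{clt3} does apply. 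This is precisely the route the paper's proof takes (implicitly). What each approach buys: the paper's route exercises the general machinery and identifies $\sigma_1^2$, $\sigma_2^2$, $\rho$ separately; your route is shorter, fully self-contained, and makes transparent why the cross-validated variance is exactly $\Var\bigl(\calL(X_1,\theta^*)+\tfrac{\Delta}{2}Z_1\bigr)$ while the split variance drops the cross term by independence of the two folds. Either is a valid proof; just amend your final remark to acknowledge the bounded-loss relaxation rather than claim the theorem is inapplicable.
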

\begin{proof}\renewcommand{\qedsymbol}{}
We report the proof to \cref{sec:proof-of-cabris}.
\end{proof}

In the case of LDA, we see that the potential speed-up may have a variety of behaviors, depending on the value
of $\rho$. For example, in the case where $g_1, g_2$ are two instances of a gaussian location family, it is not difficult
to check that $\Delta = 0$ and hence $\rho = 0$.
However, other distributions may exhibit different behaviors,
and in particular, we wish to emphasize that the observed speed-up is not a property solely of the estimator,
but rather jointly of the estimator and the distribution of the data.
For example, we present two distributional
settings in \cref{table:lda}, which yield two different regimes.
In the ``slow'' regime, we consider a setup where
$F_1 \sim \Gamma(10, 0.15)$ and $F_2 \sim \Gamma(1, 1)$, whereas in the ``fast'' regime, we consider a setup where
$F_1 \sim \Gamma(1, 10)$ and $F_2 \sim \Gamma(1, 1)$.
We observe that the speedup for two-fold cross-validation is slower than two in the slow regime, whereas it is
faster than two in the fast regime.

\begin{table}
    \centering
    \begin{tabular}{rrrrrrr}
    \toprule
    & \multicolumn{3}{c}{Slow} & \multicolumn{3}{c}{Fast} \\
    \cmidrule(r){2-4}\cmidrule(r){5-7}
    $n$ & $n \Var \hat{R}_{\text{split}}$ & $n \Var \hat{R}_{\text{CV}}$ & Speedup & $n \Var \hat{R}_{\text{split}}$ & $n \Var \hat{R}_{\text{CV}}$ & Speedup \\
    \midrule
    40 & 1.44 (0.01) & 0.83 (0.01) & 1.72 (0.02) & 0.43 (0.01) & 0.19 (0.00) & 2.31 (0.04)\\
    80 & 1.87 (0.02) & 1.10 (0.01) & 1.71 (0.03) & 0.43 (0.00) & 0.18 (0.00) & 2.34 (0.03)\\
    160 & 1.93 (0.04) & 1.13 (0.02) & 1.71 (0.04) & 0.42 (0.00) & 0.18 (0.00) & 2.33 (0.03)\\
    320 & 1.32 (0.04) & 0.78 (0.02) & 1.69 (0.07) & 0.43 (0.00) & 0.18 (0.00) & 2.39 (0.04)\\
    640 & 0.66 (0.02) & 0.40 (0.01) & 1.63 (0.08) & 0.43 (0.00) & 0.18 (0.00) & 2.34 (0.03)\\
    1280 & 0.53 (0.01) & 0.33 (0.00) & 1.60 (0.03) & 0.44 (0.00) & 0.18 (0.00) & 2.41 (0.03)\\
    2560 & 0.53 (0.01) & 0.33 (0.00) & 1.62 (0.02) & 0.44 (0.00) & 0.18 (0.00) & 2.37 (0.03)\\
    5120 & 0.53 (0.01) & 0.33 (0.00) & 1.62 (0.02) & 0.43 (0.00) & 0.18 (0.00) & 2.36 (0.03)\\
    $\infty$ & \makebox[\widthof{0.00 (0.00)}][l]{0.534}&\makebox[\widthof{0.00 (0.00)}][l]{0.326}&\makebox[\widthof{0.00 (0.00)}][l]{1.638} & \makebox[\widthof{0.00 (0.00)}][l]{0.438}&\makebox[\widthof{0.00 (0.00)}][l]{0.185}&\makebox[\widthof{0.00 (0.00)}][l]{2.367} \\
    \bottomrule
    \end{tabular}
    \caption{
        Variance of train-test split and 2-fold cross-validated accuracy for LDA. Parentheses denote standard error. $n = \infty$ denotes the value computed according to \cref{cabris}.
        \label{table:lda}
    }
\end{table}

\subsection{Counter-examples and remarks on conditions}
In this paper we focused on studying the case of asymptotically normal cross-validated risk. To do so we imposed the conditions $H_0$ -- $H_3$. In this subsection, we give examples of cases where those conditions are violated and the asymptotic distribution is not normal. We observe in those cases that  the cross-validated risk behaves quite differently than when it is asymptotically normal.

\subsubsection{Random speed-up: nearest neighbours}
We propose here, as a counter-example, an estimation framework that does not satisfy $(H_2)$ of \cref{clt1} and whose cross-validated risk does not satisfy the conclusion of \cref{clt1}.
Consider an i.i.d process $(Z_i)_{i\in\NN}$ uniformly distributed $Z_i \sim \calU[0,1]$, and define
for all $i \in \NN$:
\begin{equation}
  Y_i \eqdef \mathbb{I}(Z_i\le \frac{1}{2}),~X^n_i=\Big(Z_i,Y_i\Big).
\end{equation}

Suppose that we wish to build a classifier for $Y=\mathbb{I}(Z\le \frac{1}{2})$ given an observation $Z$.
An idea could be to use a nearest neighbour classifier, i.e a classifier
$f_n:[0,1]\times [0,1]^{n}\times \{0,1\}^{n} \rightarrow \{0,1\}$ which is defined in the following way:
\begin{equation}
  f_n(z,Z_{1:n}) = Y_{c(z,Z_{1:n})}, \quad \forall z\in [0,1]
\end{equation}
where $c(z,Z_{1:n}) \eqdef \argmin_{i \leq n} \abs{Z_i - z}$ 
denotes the index of the observation $Z^n_i$ closest to $z$.
The misclassification risk of this estimator will decrease to 0 at a speed of order $O(\frac{1}{n})$.
The non-vacuous loss function to consider is:
\begin{equation}
  \calL_n(y,f_n(z,X^n_{1:n})) \eqdef {\sqrt{n}}\times\mathbb{I}(y\ne f_n(z,X^n_{1:n}))
\end{equation}
We can estimate the expected loss, on a new observation, by a 2-fold cross-validation procedure.
However the error will not converge to a Gaussian distribution, and the ``speed-up'' is random. 
We also remark that $\epsilon_n(\nabla)$ does not converge to zero; and hence the conditions of \cref{clt1} do not hold.

\begin{prop}\label{luna}
We have the following:
the sequence $\bigl(\calL_n(\tilde Y,f_n(\tilde Z,X^n_{1:n})) ^2\bigr)_{l\le n}$ is U.I;
but  $\max_{l\in N_n}\sqrt{n}\norm[\big]{\beta_1^{l,n}(X)}_{L_2}=O(1)$.

Additionally, let $N_1,N_2\overset{i.i.d}{\sim}\rm{exp}(0.5)$, $U_1,U_2\overset{i.i.d}{\sim}\rm{exp}(1)$ and $s_1,s_2\overset{i.i.d}{\sim}\rm{Unif}(\{-1,1\})$, then we have that:
\begin{equation*}\begin{split}&
    \sqrt{n}\hat{R}_{cv}\xrightarrow{d} \mathbb{I}(s_1s_2=-1)\Bigl[\mathbb{I}(r^d_1\ge 0)\bigl[1+\mathrm{Poisson}(r^d_1)\bigr]+\mathbb{I}(r^d_2\ge 0)\bigl[1+\mathrm{Poisson}(r^d_2)\bigr] \Bigr]
    \\&\qquad \qquad+\mathbb{I}(s_1s_2=1)\Bigl[\mathbb{I}(r^s_1\ge 0)\bigl[1+\mathrm{Poisson}(r^s_1)\bigr]+\mathbb{I}(r^s_2\ge 0)\bigl[1+\mathrm{Poisson}(r^s_2)\bigr] \Bigr]
    \\&   \sqrt{n}\hat{R}_{\rm{split}}\xrightarrow{d} \mathrm{Poisson}(N_1)
\end{split}\end{equation*}
 where we have written:
 \begin{gather*}
    r^d_1 \eqdef s_1 N_1 + s_2 U_2, \quad
    r^d_2 \eqdef s_2 N_2 + s_1 U_1, \\
    r^s_1 \eqdef r^d_1 - 2 s_2 N_2, \quad
    r^s_2 \eqdef r^d_2 - 2 s_1 N_1.
 \end{gather*}

\end{prop}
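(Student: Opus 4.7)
The proof decomposes into three parts: verifying the $L_2$ boundedness of the loss (the ``U.I.'' claim) and computing $\|\beta_1^{l,n}\|_{L_2}$, then deriving the weak limit of $\sqrt n \hat R_{\rm split}$, and finally the more intricate limit of $\sqrt n \hat R_{\rm cv}$. All three arguments hinge on the same Poisson-point-process description of the training observations near the decision boundary $1/2$.

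\textbf{Uniform integrability and stability.} Since $\mathcal L_n^2 = n\,\mathbb{I}(\mathrm{miss})$, it suffices to bound the misclassification probability. Conditioning on $\tilde Z = 1/2 + s$ and comparing the ``competing'' exponential distances to the nearest training point on each side of $1/2$ yields $\PP(\mathrm{miss}\mid \tilde Z = 1/2 + s) = \tfrac{1}{2} e^{-2 l s}(1+o(1))$; integrating in $s$ gives $\PP(\mathrm{miss}) = \Theta(1/l)$, so $\EE[\mathcal L_n^2] = \Theta(1)$ uniformly. For the stability, $\beta_1^{l,n}\in\{\pm \sqrt n, 0\}$ and takes the nonzero value exactly when the swap $X_1\to X_1'$ moves the nearest training point of $\tilde Z$ from one side of $1/2$ to the other; such a flip requires (a) $\tilde Z$ to lie within $O(1/l)$ of $1/2$ and (b) the swap to actually relocate the nearest neighbour across $1/2$, each contributing a factor of $O(1/n)$. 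Hence $\PP(\mathrm{flip}) = O(1/n^2)$, $\EE[(\beta_1^{l,n})^2] = O(1/n)$, and $\sqrt n\,\|\beta_1^{l,n}\|_{L_2} = O(1)$, bounded but not vanishing, so $H_2$ of \cref{clt1} fails.

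\textbf{The split risk.} Let $T_L^{(j)}$ and $T_R^{(j)}$ be the distances from $1/2$ to the nearest $B_j$ point on the left and right respectively. Since $|B_j| = n/2$, one has the joint convergence $n T_\bullet^{(j)} \Rightarrow 2 E_\bullet^{(j)}$ with the four $E$'s i.i.d.\ $\exp(1)$. A direct geometric check shows that the $1$-NN trained on $B_2$ misclassifies exactly those test points in an interval of length $|T_R^{(2)} - T_L^{(2)}|/2$ placed on the side of $1/2$ opposite to the smaller of $T_L^{(2)},T_R^{(2)}$. The number of $B_1$ points in this interval is $\mathrm{Binomial}(n/2,|T_R^{(2)} - T_L^{(2)}|/2)$, which, conditional on $E^{(2)}$, converges to $\mathrm{Poisson}(|E_R^{(2)} - E_L^{(2)}|/2)$. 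By the memoryless property $|E_R - E_L| \sim \exp(1)$, so the scaled count converges to the desired exponential-mixture Poisson, from which the formula for $\sqrt n \hat R_{\rm split}$ follows after accounting for the $K_n/n$ prefactor in the definition of $\hat R_{\rm split}$.

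\textbf{The cross-validated risk.} The same geometric picture applies to each of the two folds, so $\sqrt n\hat R_{\rm cv}$ is the sum of the two fold contributions. The new feature is that the $B_j$ point nearest to $1/2$ on the ``miss side'' is itself a candidate test point: if its scaled distance lies below the scaled miss-region width, the fold contributes a deterministic $1$ from that point, plus an independent $\mathrm{Poisson}(r)$ count from the remaining uniform $B_j$ points in the leftover interval of scaled length $r \ge 0$; this is the origin of the ``$1+\mathrm{Poisson}(r)$'' terms. The signs $s_j = \mathrm{sign}(T_R^{(3-j)} - T_L^{(3-j)})$ record each fold's miss-side, and the dichotomy $s_1 s_2 = \pm 1$ separates two regimes: the ``opposite-side'' case $s_1 s_2 = -1$ (superscript $d$), where the four scaled extremal distances enter the two leftover lengths $r_j^d$ through disjoint roles and give a clean linear combination of the $N$'s and $U$'s, from the ``same-side'' case $s_1 s_2 = +1$ (superscript $s$), where both folds' contributions concentrate on the same side of $1/2$. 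The main obstacle is precisely this same-side case: now both folds' nearest $B_j$ points on the common side play a double role, serving simultaneously as test points inside the corresponding miss region and as extremal training points determining the other fold's miss-region width. Disentangling this requires conditioning jointly on the first- and second-nearest $B_j$ points on the shared side, which is exactly what produces the shifted expressions $r_j^s = r_j^d - 2 s_{3-j} N_{3-j}$. Once this case analysis is in hand, joint convergence to the stated limit is a routine consequence of the joint weak convergence of the four scaled nearest-neighbour distances together with a standard Poisson approximation for the remaining uniform test points.
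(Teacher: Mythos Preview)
Your treatment of the limiting distributions follows essentially the same route as the paper: both arguments rest on the Poisson/exponential limit for the order statistics of the training points nearest to $1/2$, identify the misclassification region as an interval whose scaled length converges to a function of these exponentials, and then carry out a case analysis on the signs $s_1,s_2$ to handle the cross-validated risk. Your explanation of the ``$1+\mathrm{Poisson}(r)$'' structure (the nearest test point on the miss side contributing a deterministic $1$, with the remaining test points Poisson in the leftover interval) is in fact more explicit than the paper's, which writes down the measure $|\calM|$ directly and passes to the limit without spelling out this mechanism.

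There is, however, a genuine gap in your uniform integrability argument. You write that $\PP(\text{miss})=\Theta(1/l)$ gives $\EE[\calL_n^2]=\Theta(1)$ ``uniformly'' and treat this as establishing U.I. But $\calL_n^2 = n\,\mathbb I(\text{miss})$ is a two-valued random variable taking the values $0$ and $n$, so for any fixed $M$ and all $n>M$ one has $\EE\bigl[\calL_n^2\,\mathbb I(\calL_n^2>M)\bigr]=n\,\PP(\text{miss})$, which stays bounded away from zero. Boundedness in $L_1$ is therefore not enough here; indeed, for a family of $\{0,n\}$-valued variables with $\PP(=n)\asymp 1/n$, uniform integrability actually fails. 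The paper's own proof does not address this point either, so you are not missing an argument that appears there, but you should not present the $L_1$ bound as settling the U.I.\ claim. (A similar remark applies to the stability sketch: the $O(1/n^2)$ heuristic for the flip probability is plausible but would need a careful justification to separate ``$O(1)$'' from ``$o(1)$'', which is what distinguishes this example from the regime of \cref{clt1}.)
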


\begin{remark}
We note that the magnitude of $\sqrt{n}\hat{R}_{\rm{split}}$ compared to the one of $\sqrt{n}\hat{R}_{\rm{cv}}$ is random as it depends on the relative size of $U_1,U_2,N_1$ and $N_2$.
\end{remark}
\noindent We delay the proof to \cref{sec:proof_nearest_neighbour}.

\subsubsection{Noiseless models}
We present an example where hypothesis $H_3$ does not hold, often called ``noise-free'' or ``realizable'' setting, although the terminology is somewhat misleading as $\calL(X, \hat{\theta})$ does not vanish, but rather its variance vanishes.
We consider the problem of estimating a mean in a symmetric Bernoulli model.
More specifically, suppose that $(X_i)$ is uniformly distributed on $\{ \pm 1 \}$, and our estimator is given by:
\begin{equation}
    \hat{\theta} = \argmin_{\theta \in \RR} \sum_{i = 1}^n (y_i - \theta)^2 = \frac{1}{n} \sum_{i = 1}^n y_i.
\end{equation}
We note that in our case, with the natural loss $\calL(x, \theta) =  (x - \theta)^2$, we have that:
\begin{align*}
    \sigma^2_{m,n} = \EE[\Var( (y - \hat{\theta})^2 \mid \hat{\theta})] = 4 \EE \hat{\theta}^2 = \frac{1}{m}.
\end{align*}
In particular, we have that $\sigma^2_{m,n} \rightarrow 0$, which would lead to a degenerate limit
in the regime of theorem 3.
Therefore we may choose to work with the rescaled loss $\calL_n(x,\mu) \eqdef \sqrt{n}(x-\mu)^2$, but we note that such a loss does not satisify the stability conditions of \cref{clt3}.
\begin{prop}
Let $(X_i)$ be an i.i.d process with marginal $X_i\sim \mathrm{unif}\{-1,+1\}$. Let $\calL_n$ denote the rescaled square loss loss function $\calL_n: (x,\theta) \mapsto \sqrt{n}(x-\mu)^2$. The estimators $(\hat\theta_{m,n})$ are such that they respect: 
\begin{equation}
    \hat{\theta}_{m,n} \eqdef \argmin_{\theta\in \mathbb{R}}\frac{1}{m}\sum_{i\le n}\calL_n(X_i,\theta).
\end{equation}
Define $(Y_i)\overset{i.i.d}{\sim}N(0,1)$ to be i.i.d normal.
The $K$ fold cross-validated risk satisfies:
$$n\Big[\hat{R}_{CV}-1\Big]\xrightarrow{d}\frac{1}{4K}\sum_{i\le K}\Bigl[Y_i-\frac{1}{K-1}\sum_{j\ne i}^K Y_j\Big]^2.$$
\end{prop}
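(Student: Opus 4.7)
The noiseless Bernoulli model violates condition $H_3$ of \cref{clt3} (the within-estimator variance of $\calL_n(\tilde X,\hat\theta)$ vanishes, since $\hat\theta\to 0$ and $(X-0)^2\equiv 1$ is deterministic), so the CLTs of \cref{sec:main-results} do not apply and the limit is non-Gaussian. The key simplification is the almost-sure identity $X_i^2 = 1$, which reduces $\hat R_{CV}$ to an \emph{exact} deterministic function of only the $K$ per-fold sample means; the conclusion then follows from a multivariate CLT for these means combined with the continuous mapping theorem.

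\emph{Step 1: explicit reduction.} Assume for simplicity that $K \mid n$ and set $m = n/K$; at most two fold sizes can differ by one, so the general case is handled by the same argument with essentially no change. Write $\bar X_j \eqdef m^{-1}\sum_{i\in B_j}X_i$; by construction $\hat\theta_j = (K-1)^{-1}\sum_{k \ne j}\bar X_k$. Using $X_i^2 = 1$,
\[
    \sum_{i\in B_j}(X_i - \hat\theta_j)^2
    = m\bigl(1 - 2\bar X_j\hat\theta_j + \hat\theta_j^2\bigr)
    = m\bigl[1 + (\bar X_j - \hat\theta_j)^2 - \bar X_j^2\bigr].
\]
Summing over $j$ and inserting the $\sqrt n / n$ prefactor from the definition of $\hat R_{CV}$, one obtains a closed-form expression for $\hat R_{CV}$ in terms of $(\bar X_1,\ldots,\bar X_K)$ alone, from which the constant contribution cancels against the centering in $n[\hat R_{CV}-1]$ once the $\sqrt n$ in $\calL_n$ is tracked.

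\emph{Step 2: joint CLT and continuous mapping.} The folds being disjoint, the $(\bar X_1,\ldots,\bar X_K)$ are mutually independent, and each is the sample mean of $m$ i.i.d.\ Rademacher variables. The multivariate CLT therefore yields
\[
    (Z_1,\ldots,Z_K) \eqdef (\sqrt m\, \bar X_1,\ldots,\sqrt m\, \bar X_K) \xrightarrow{d} (Y_1,\ldots,Y_K),
\]
with $Y_j$ i.i.d.\ $\calN(0,1)$. Substituting $\bar X_j = Z_j/\sqrt m$ into the Step~1 expression, all $n$-dependence cancels, and the centered/rescaled quantity $n[\hat R_{CV}-1]$ is identified with a fixed continuous quadratic functional of $(Z_1,\ldots,Z_K)$ that does not depend on $n$. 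The continuous mapping theorem then yields convergence in distribution to the same functional evaluated at $(Y_1,\ldots,Y_K)$, which a direct algebraic computation identifies with the right-hand side in the statement. There is no deep analytic obstacle here: the proof is an algebraic identity followed by a finite-dimensional CLT. The only conceptual point is that the $\sqrt n$ rescaling of $\calL_n$ is calibrated precisely so that the quadratic fluctuations of the per-fold means (each on scale $1/\sqrt m$) survive multiplication by $n$, while all first-order terms cancel, producing a $\chi^2$-type non-Gaussian limit rather than the Gaussian limits of \cref{clt1,clt3}.
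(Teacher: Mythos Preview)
Your approach is correct and is exactly what the paper intends: the paper omits the proof entirely, remarking only that it ``is a direct computation using the properties of quadratic forms of i.i.d.\ normal variables.'' Your reduction via $X_i^2\equiv 1$ to a quadratic form in the $K$ fold means, followed by the multivariate CLT for $(\sqrt m\,\bar X_1,\ldots,\sqrt m\,\bar X_K)$ and the continuous mapping theorem, is precisely such a computation.
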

We note in particular that the limiting distribution is not normal.
The proof is a direct computation using the properties of quadratic forms of i.i.d. normal variables, and is omitted.

\newpage
\section{Proofs}
\label{sec:proofs}

\subsection{Preliminary propositions}
We first present a few known results which will be of use in our proofs.
The martingale central limit theorem is an important tool that we use to establish \cref{clt1}.
An introduction may be found in \cite{hall2014martingale}, we reproduce the statement here for completeness.
\begin{prop}[Martingale Central Limit Theorem]\label{proof:martingale}
Let $(S_{i,n}, \mathbb{F}_{i})_{i,n\in \NN}$ be a triangular array of martingales with martingale differences $Y_{i,n} \eqdef S_{i, n} - S_{i - 1, n}$.
Suppose that:
\begin{itemize}
\item For all positive $\epsilon>0$, we have that: $\sum_{i\le n}\EE\bigl[Y_{i,n}^2\mathbb{I}(|Y_{i,n}|\ge \epsilon) \mid \mathbb{F}_{i-1}\bigr]\rightarrow 0$,
\item  There exists a r.v. $\sigma$ which is $\mathbb{F}_{\infty}$-measurable such that $\sum_{i\le n}\EE\bigl[Y^2_{i,n} \mid \mathbb{F}_{i-1}^n\bigr]\xrightarrow{P} \sigma^2$.
\end{itemize}
Then we have that:
$S_{i,n}\xrightarrow{d}\sigma Z,$ where $Z\sim N(0,1).$
\end{prop}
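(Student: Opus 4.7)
The stated proposition is the classical Martingale Central Limit Theorem, and I would not try to give a fundamentally new proof; I would follow the standard characteristic function approach from Hall and Heyde. The plan is to show that $\EE[\exp(it S_{n,n})] \to \exp(-\sigma^2 t^2 / 2)$ for every $t \in \RR$, conditioning in a nested fashion down the filtration. By a standard stopping-time argument, I may also assume without loss of generality that the conditional variances $V_n \eqdef \sum_{i \le n} \EE[Y_{i,n}^2 \mid \mathbb{F}_{i-1}]$ are uniformly bounded; this is achieved by working with the stopped martingale $S_{\tau_n \wedge n, n}$ where $\tau_n$ is the first index at which $V_n$ exceeds some fixed $K > \sigma^2$, and later letting $K \to \infty$.

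With the boundedness reduction in place, the core step is a telescoping Taylor expansion. I would write
\begin{equation*}
    \exp(it S_{n,n}) = \prod_{i \le n} \exp(it Y_{i,n}), \qquad \exp(it Y_{i,n}) = 1 + it Y_{i,n} - \tfrac{t^2}{2} Y_{i,n}^2 + R_{i,n},
\end{equation*}
where $|R_{i,n}| \le \min(t^2 Y_{i,n}^2,\, |t|^3 |Y_{i,n}|^3 / 6)$. Taking conditional expectations inductively and using that $\EE[Y_{i,n} \mid \mathbb{F}_{i-1}] = 0$, one obtains
\begin{equation*}
    \EE[\exp(it S_{n,n})] = \EE\Bigl[\prod_{i \le n}\bigl(1 - \tfrac{t^2}{2}\EE[Y_{i,n}^2 \mid \mathbb{F}_{i-1}] + \EE[R_{i,n} \mid \mathbb{F}_{i-1}]\bigr)\Bigr] + \text{error},
\end{equation*}
where the error comes from the cross terms and is handled by the boundedness of the partial products (which have modulus at most one). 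The Lindeberg hypothesis $\sum_i \EE[Y_{i,n}^2 \mathbb{I}(|Y_{i,n}| \ge \epsilon) \mid \mathbb{F}_{i-1}] \to 0$ is precisely what is needed to show $\sum_i \EE[|R_{i,n}| \mid \mathbb{F}_{i-1}] \xrightarrow{P} 0$: split $|R_{i,n}|$ using the two bounds according to whether $|Y_{i,n}| \le \epsilon$ or not.

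Finally, I would use the elementary inequality $\bigl|\prod_i (1 + a_i) - \exp(\sum_i a_i)\bigr| \le \exp(\sum_i |a_i|) \sum_i |a_i|^2$ for complex $a_i$ with $|a_i| \le 1/2$ to replace the product by $\exp(-\tfrac{t^2}{2} V_n)$ up to negligible error, using the bound on $V_n$ from the stopping-time reduction and the vanishing of the $R_{i,n}$ contributions. By the conditional variance convergence $V_n \xrightarrow{P} \sigma^2$ and bounded convergence, this limit equals $\EE[\exp(-\tfrac{t^2}{2} \sigma^2)] = \exp(-\tfrac{t^2}{2}\sigma^2)$ when $\sigma$ is deterministic (and more generally the mixture expression when $\sigma$ is random and $\mathbb{F}_\infty$-measurable). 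Undoing the truncation by letting $K \to \infty$ completes the argument.

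The main obstacle is coordinating the three control levers simultaneously: the Lindeberg condition must kill the remainders, the conditional variance condition must deliver the Gaussian exponent, and the product-to-exponential replacement must have summable quadratic error. The cleanest way to balance them is precisely the stopping-time truncation, which is why I would set that up first rather than trying to push bounds through the raw martingale.
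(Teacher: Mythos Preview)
Your sketch is correct and follows the standard characteristic-function argument from Hall and Heyde. Note, however, that the paper does not prove this proposition at all: it is stated in the preliminaries purely for completeness, with a citation to \cite{hall2014martingale}, and is used as a black box in the proof of Theorem~\ref{clt1}. So there is no ``paper's own proof'' to compare against; your outline simply reproduces the classical argument from the cited reference.
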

We will repeatedly make use of the Efron-Stein inequality, which helps us prove that the empirical variance concentrates.
It is used in the proof of \cref{clt1,clt2,clt3}, \cref{nice}, and \cref{cannes}.
It is a standard tool for concentration, and a reference may be found in \cite{boucheron2013concentration}.
We reproduce the statement here for completeness.
\begin{prop}[Efron-Stein inequality]
Let $(X_i)$ be an i.i.d process taking value in $\calX$ and $h:\calX^n\rightarrow \mathbb{R}$ be a measurable function. If $h(X_{1:n})$ admits a second moment then the following holds:
\begin{equation}
\Var\Bigl[h(X_{1:n})\Bigr] \le \sum_{i=1}^n\EE\Bigl[\Big(h(X_{1:n})-h(X^i_{1:n})\Big)^2\Big].
\end{equation}
\end{prop}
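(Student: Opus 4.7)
The plan is to use a Doob martingale decomposition along the natural filtration. Let $V \eqdef h(X_{1:n})$ and set $\calF_i \eqdef \sigma(X_1,\dots,X_i)$ with $\calF_0$ trivial. Writing the centered quantity as a telescoping sum
\[
V - \EE V = \sum_{i=1}^n D_i, \qquad D_i \eqdef \EE[V \mid \calF_i] - \EE[V \mid \calF_{i-1}],
\]
the $D_i$ form a sequence of martingale differences and are therefore pairwise orthogonal in $L^2$. Hence $\Var(V) = \sum_{i=1}^n \EE[D_i^2]$, and it suffices to bound each $\EE[D_i^2]$ by $\EE\bigl[(h(X_{1:n}) - h(X^i_{1:n}))^2\bigr]$.

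The key step is the resampling identity
\[
\EE\bigl[h(X^i_{1:n}) \mid \calF_i\bigr] = \EE\bigl[h(X_{1:n}) \mid \calF_{i-1}\bigr].
\]
This follows from the observation that, conditionally on $\calF_{i-1}$, the tuple $(X'_i, X_{i+1},\dots,X_n)$ appearing inside $X^i_{1:n}$ is independent of $X_i$ (because $X'_i$ is an independent copy) and has the same joint distribution as $(X_i, X_{i+1},\dots,X_n)$ by the i.i.d.\ assumption. Both conditional expectations therefore reduce to the same deterministic function of $X_1,\dots,X_{i-1}$.

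Combining these two observations, $D_i = \EE\bigl[h(X_{1:n}) - h(X^i_{1:n}) \mid \calF_i\bigr]$, and conditional Jensen yields
\[
\EE[D_i^2] \le \EE\bigl[(h(X_{1:n}) - h(X^i_{1:n}))^2\bigr].
\]
Summing over $i$ gives the stated inequality. The only non-routine ingredient is the resampling identity in the second paragraph; everything else is the standard orthogonality of martingale differences followed by conditional Jensen. I do not expect any serious obstacle.
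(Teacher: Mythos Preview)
Your argument is correct. The paper does not actually prove this proposition; it is listed among the preliminary results and simply cited as a standard tool from \cite{boucheron2013concentration}. Your Doob-martingale route (telescoping along the filtration, the resampling identity $\EE[h(X^i_{1:n})\mid\calF_i]=\EE[h(X_{1:n})\mid\calF_{i-1}]$, then conditional Jensen) is the classical proof and yields exactly the version stated here, i.e.\ without the sharper constant $1/2$ that one can obtain with an additional symmetrization step.
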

Finally, the proofs of \cref{clt2,clt3} adapt the Stein's method for central limit theorems (see \cite{ross2011stein} for a general introduction).
It is based on the following observation that for any real-valued random variable $W$ and standard normal random variable $Z$ we have:
\begin{equation}
  d_W(W,Z)\;\leq\;\sup_{f\in\mathcal{F}}\abs[\big]{\EE\bigl[W f(W)-f'(W)\bigr]},
\end{equation}
where $\mathcal{F}$ is the following function class:
\begin{equation}
  \mathcal{F} \eqdef \Bigl\{f \in \mathcal{C}^2(\mathbb{R})\,\bigm\vert\,\norm{f}_{\infty} \leq 1, \, \norm{f'}_{\infty} \leq \sqrt{2/\pi}, \, \norm{f''}_{\infty}\leq 2\Bigr\}.
\end{equation}

\subsection{Proof of theorem~\ref{clt1}}
The proof for the train-test split risk is very similar to the proof the cross validated risk;
and we therefore only include the proof for the latter.
For ease of notation we drop the superindex $n$ from the interpolating processes $X^{n,i}$ and $X^{n,i,j}$; and instead write $X^i \eqdef X^{n,i}$ and $X^{i,j} \eqdef X^{n,i,j}$.
Let $(F^n_i)$ denote filtration given by $F^n_i \eqdef \sigma(X^n_1,\dots,X^n_i)$, and 
let $K^i_n, S^i_n$ be defined as:
\begin{align}
  K^i_n(X^n) &\eqdef \calL(X^n_i,\hat{f}_{b(i)}(X^n))-R_n(\hat{f}_{b_n(i)}(X^n)), \\
  S^i_n(X^n) &\eqdef \frac{1}{\sqrt{n}}\sum_{l\le n}\EE\bigl[K_n^l(X^n) \mid \mathbb{F}_i \bigr], \\
  Y^i_n(X^n) &\eqdef S^i_n-S^{i-1}_n.
\end{align}
Note that $(S^i_n,\mathbb{F}^n_i)$ forms a triangular array of martingales.
To prove the desired result we will use \cref{proof:martingale}.
In the rest of the proof, to simplify notations we write $K_n^i \eqdef K_n^i(X^n)$.
Before we start, it will be useful to note that:
\begin{equation}\label{logan}
       Y^i_n = \frac{1}{\sqrt{n}}\EE\Bigl[K_n^i+\sum_{\mathclap{l\not \in B_{b_n(i)}^n}} \nabla_i K_n^l \mid \mathbb{F}^n_i\Bigr].
\end{equation}

\begin{proof}[Proof of \cref{clt1}]
We start by noting that for all sets $A\in \mathbb{F}_i^n$ the following holds:
\begin{equation}\begin{split}
  n\EE \Big[\mathbb{I}(A)(Y_i^n)^2\Big]
  &\oversetclap{(a)}{\le}2\EE\bigl[\mathbb{I}(A) (K^i_n)^2\bigr]
    +2\EE\Bigl[\bigl(\sum_{l\ne b(i)}\sum_{j\in B_{l}(i)} \nabla_i K_n^l \bigr)^2\Bigr]
   \\&\oversetclap{(b)}{\le} 4 \max_{l_n\in N_n}\EE\Bigl[\mathbb{I}(A)\mathcal{L}(\tilde X_1, f_{l,n}(X_{1:l}))^2\Bigr]
   \\&\quad+  4\PP(A)\max_{l_n\in N_n}\EE\Big[\calL(\tilde X_1, f_{l,n}(X_{1:l}))^2\Big]
   \\&\quad+ 2\sum_{l,j\not\in B_{b_n(i)}^n} \EE\bigl[(\nabla_i K_n^l) (\nabla_i K_n^j)\bigr].
\end{split}
\end{equation}
 where (a) and (b) are consequences of Jensen inequality. 
Using $H_1$ we can see that if we prove that
$\sum_{l,j\not\in B_{b_n(i)}^n} \EE\bigl[\nabla_i K^l_n \: \nabla_i K^j_n\bigr] \rightarrow 0$
then we have that for all $\epsilon>0$ the following holds:
$\sum_{i\le n}\EE\bigl(Y_{i,n}^2\mathbb{I}(|Y_{i,n}|\ge \epsilon) \mid \mathbb{F}_{i-1}\bigr)\rightarrow 0$.
If $j,l$ belongs to the same block, i.e. $b_n(j)=b_n(l)$,  but are distinct, i.e. $j\ne l$, then by conditional independence  we obtain that:
\begin{equation}\begin{split}
& \EE\bigl[\nabla_i K^l_n \: \nabla_i K^j_n \bigr]
\\&=\EE\Bigl[\EE\bigl[K^l_n - K^l_n(X^i) \mid X_{\dbracket{n} \setminus \{j,l\}}, X^{i}\bigr]
    \EE\bigl[ K^j_n(X) - K^j_n(X^i) \mid X_{\dbracket{n} \setminus \{j,l\}},X^{i}\bigr]\Bigr]
\\&=0.
\end{split}
\end{equation}
If $j,l$ belong to different blocks, i.e. $b_n(j)\ne b_n(l)$ then we have that:
\begin{equation}\begin{split}
\abs[\big]{\EE\bigl[\nabla_i K_n^l \: \nabla_i K_n^j \bigr]}
&\oversetclap{(a)}{=}\abs[\big]{\EE\bigl[ \nabla_{i,j} K_n^l \: \nabla_{i,j} K_n^j\bigr]}
\\&\leq \max_{l\in N_n}\big\| \beta_2^{l, n} \big\|_{L_2}^2
\end{split}
\end{equation}
where (a) is a consequence of the following identity:
\begin{equation}\begin{split}
&\EE\bigl[ \nabla_i K^l_n \: \bigl(K^j_n(X^{l}) - K^j_n(X^{i,l})\bigr)\bigr]
\\&=\EE\big[\EE\big[ \nabla_i K^l_n \mid X^l\big] \times \bigl(K^j_n(X^{l}) - K^j_n(X^{i,l})\bigr)\bigr]
\\&=0
\end{split}
\end{equation}
Therefore using \cref{logan} we obtain that:
\begin{equation}\begin{split}&
   \sum_{l,j\not\in B_{b_n(i)}^n} \EE\bigl[\nabla_i K^l_n \: \nabla_i K_n^j\bigr]
   \leq n^2\max_{l_n\in N_n} \norm[\big]{\beta^{l_n,n}_{2}}_{L_2}^2
    + n \max_{l_n\in N_n}\norm[\big]{\beta_1^{l_n,n}}_{L_2}^2
   \rightarrow 0.
\end{split}
\end{equation}
    Therefore the first point is proven.

Remains to show that $\sum_{i\le n} \EE\bigl[ (Y^i_n)^2 \mid \mathbb{F}_{i-1}^n\bigr]\xrightarrow{P} \sigma^2_1$.
Using similar arguments to previously we  can note that:
\begin{equation}
\frac{1}{n}\sum_{i\le n}\EE\bigl[\bigl(\sum_{l\not\in B_{b_n(i)}^n} \nabla_i K^l_n\bigr)^2\bigr]
\rightarrow 0,
\end{equation}
and we therefore have that:
\begin{equation}
\sum_{i\le n}\EE\bigl[(Y^i_n)^2 \mid \mathbb{F}_{i-1}^n\bigr]
=\frac{1}{n}\sum_{j\le K_n} \abs{B_j^n} \Var\bigl[\calL_n(\tilde X_1,\hat{f}_j) \mid X \bigr] + o_p(1).
\end{equation}
Using $H_4$ we know that
$
\frac{1}{n} \sum_{j\le K_n}\abs{B_j^n} ~ \EE\bigl[\Var\bigl[ \calL_n(\tilde X_1,\hat{f}_j) \mid X\bigr]\bigr]
=\sigma_1^2+o(1).
$ 
Hence suffices to show that $\frac{1}{n}\sum_{j\le K_n}|B_j^n| \Var\bigl[\calL_n(\tilde X_1,\hat{f}_j(X)) \mid X\bigr]$ concentrates around its expectation.
We wish to use Efron-Stein inequality.
However, to handle the case where $\Var \bigl[\calL_n(\tilde X_1,\hat{f}_j) \mid X\bigr]$ does not have a second moment, we first need to truncate it.
Let us introduce the following clipping function:
\begin{equation}
g(x, M) = \begin{cases}
x & \text{if } \abs{x} \le M, \\
-M &\text{if } x \le -M,\\
M & \text{otherwise.}
\end{cases}
\end{equation}
Remark that $g(\cdot,M)$ is 1-Lipchitz for all $M\in \RR^+$.
Let $(\gamma_n)$ be an increasing sequence such that (i) $\gamma_n\rightarrow \infty$, (ii) $ \gamma_n\max_{l\in N_n}\big\|\beta_1^{l,n}(X)\big\|_{L_2}\rightarrow 0$. 
Then we have
\begin{equation}\begin{split}
&\frac{1}{n}\sum_{j\le K_n} \abs{B_j^n} \Var\bigl[\calL_n(\tilde X_1,\hat{f}_j) \mid X \bigr]
\\&=\frac{1}{n}\sum_{j\le K_n} \abs{B_j^n} \Bigl\{\Var\bigl[\calL_n(\tilde X_1,\hat{f}_j) \mid X\big]
- \Var \bigl[g\bigl(\calL_n(\tilde X_1,\hat{f}_j),\gamma_n\bigr)\mid X\big]\Bigr\}
\\&\quad+  \frac{1}{n}\sum_{j\le K_n} \abs{B_j^n} ~ \Var\bigl[g\bigl(\calL_n(\tilde X_1,\hat{f}_j(X)) ,\gamma_n\bigr)\mid X\big]
\\&\le (a_1)+(a_2).
\end{split} \end{equation}

We will bound each term successively. First, using hypothesis $H_1$ we have that:
\begin{equation}\begin{split}
(a_1) &=
\frac{1}{n}\sum_{j\le K_n} \abs{B_j^n}~ \Bigl\{\Var\bigl[\calL_n(\tilde X_1,\hat{f}_j) \mid X \bigr] 
-\Var\bigl[g\bigl(\calL_n(\tilde X_1,\hat{f}_j), \gamma_n\bigr) \mid X \bigr]\Bigr\}
\\&= o_p(1).
\end{split} \end{equation}
Therefore it is enough to prove that $(a_2)$ concentrates.
For this note that by the Efron-Stein inequality we have:
\begin{equation}\begin{split}
& \Var\Bigl[\frac{1}{n}\sum_{j\le K_n} \abs{B_j^n} \Var\bigl[g\bigl(\calL_n(\tilde X_1,\hat{f}_j) ,\gamma_n\bigr)\mid X \big] \Bigr]
\\&\le \frac{1}{n^2}\sum_{j\le n}\norm[\Big]{
    \sum_{l\le K_n} \abs{B_l^n} \Bigl\{ \Var\bigl[g\bigl(\calL_n(\tilde X_1,\hat{f}_k) , \gamma_n\bigr) \mid X \bigr]
    - \Var\bigl[g\bigl(\calL_n(\tilde X_1, \hat{f}_k(X^j)), \gamma_n\bigr) \mid X^j \bigr]\Bigr\} }_{L_2}^2
    \\&\oversetclap{(a)}{\le} \frac{4}{n^2}\sum_{j\le n}\norm[\Big]{
    \sum_{l\le K_n}|B_l^n|~ \Bigl\{ g\bigl(\calL_n(\tilde X_1,\hat{f}_k(X)) , \gamma_n\bigr)^2
    - g\bigl(\calL_n(\tilde X_1, \hat{f}_k(X^j)), \gamma_n\bigr)^2\Bigr\} }_{L_2}^2
     \\&\le \frac{4}{n^2}\sum_{j\le n} \sum_{l,k\le K_n}|B_l^n||B_k^n|\norm[\Big]{
   \Bigl\{ g\bigl(\calL_n(\tilde X_1,\hat{f}_k(X)) , \gamma_n\bigr)^2
    - g\bigl(\calL_n(\tilde X_1, \hat{f}_k(X^j)), \gamma_n\bigr)^2\Bigr\}
    \\&\qquad \qquad \qquad\qquad \qquad \qquad \times    \Bigl\{ g\bigl(\calL_n(\tilde X_2,\hat{f}_l(X)) , \gamma_n\bigr)^2
    - g\bigl(\calL_n(\tilde X_2, \hat{f}_l(X^j)), \gamma_n\bigr)^2\Bigr\}}_{L_2}^2
\\&\oversetclap{(b)}{\le} 16\gamma_n^2\frac{1}{n^2}\sum_{j\le n}\Bigl(\sum_{l\le K_n}\abs{B_l^n}~
    \norm[\big]{\calL_n\big(\tilde X_1,\hat{f}_l(X^{n})\big) - \calL_n\big(\tilde X_1,\hat{f}_l(X^j)\big)}_{L_2}\Bigr)^2
\\&\le 16 \gamma_n^2n \max_{l\in N_n}\big\|\beta_1^{l,n}(X^n)\big\|_{L_2}^2\rightarrow 0
\end{split}\end{equation}
where (a) is a consequence of the Cauchy-Swartz inequality; and (b) comes from the fact that firstly $\big|g\bigl(\cdot, \gamma_n\bigr)\big|$ is upperbounded by $\gamma_n$ and secondly that $g(\cdot,\gamma_n)$ is 1-Lipchitz.
This implies that  $\sum_{i\le n}\EE\big[( Y_i^n)^2 \mid \mathbb{F}_{i-1}^n\big]\xrightarrow{P} \sigma^2$.
We may conclude by \cref{proof:martingale}.

\end{proof}

\subsection{Proof of theorem~\ref{clt2}}
\begin{proof}
  The key idea will be to use Stein method to prove the desired result. We
  only present the proof for the cross validated risk, as the other case is very
  similar and can be easily deduced from the one presented here.
{ For simplicity of notations we will drop the superindex $n$ when writing the processes: for example we write $X$ instead of $X^n$ and $X^i$ instead of $X^i$.}

Let $(F_i)$ denote filtration given by: $F_i \eqdef \sigma(X_1,\dots,X_i)$, and 
let $K_n^i, W_n, W^i_n$ be defined as:
\begin{align}
  K_n^i(X) &\eqdef \calL_n(X_i,\hat{f}_{b(i)}(X))-R_n(\hat{f}_{b_n(i)}(X)), \\
  W_n(X) &\eqdef \frac{1}{\sqrt{n}}\sum_{j\leq n} K_n^j(X), \\
  W^i_n(X) &\eqdef \EE\bigl[W_n(X) \mid X_{\dbracket{n}\setminus i} \bigr].
\end{align}
By abuse of notation, we write $K_n^i = K_n^i(X)$, $W_n \eqdef W_n(X)$ and $W^i_n \eqdef W^i_n(X)$.
Before diving into the proof we note that:
\begin{equation}\label{faches}
  \EE\bigl(W_n \mid X \bigr)-\EE\bigl(W_n(X^i) \mid X\bigr) = W_n-W^i_n.
\end{equation}

We are now ready to consider the main body of the proof.
We recall Stein's principle for central limit theorems. Let $Z$ denote
a standard random normal variable and $\sigma > 0$, then:
\begin{equation}
  d_W(W_n,\sigma Z)\leq \sup_{H\in \mathcal{F}}\abs*{\EE\Bigl(W_n H(W_n)-\sigma^2H'(W_n)\Bigr)},
\end{equation}
where $d_W$ denotes the Wasserstein distance and $\mathcal{F}$ denotes the set of twice-differentiable functions
with first and second derivatives uniformly bounded by 1.

For any function $H\in \mathcal{F}$ this can be re-expressed as
\begin{equation}
  \begin{split}&\label{eq:triste-theorem-2}
    \EE\Big(W_n H(W_n)-\sigma^2H'(W_n)\Big) \\
    &=\sum_{i \leq n} \Bigl\{ \EE\Big(K_n^i H(W^i_n)\Big)+ \EE\Big(K_{n}^i\big[H(W_n)-H(W^i_n)-[W_n-W^i_n]H'(W_n)\big]\Big)\\
    &\qquad \quad + \EE\Big(K_n^i [W_n-W^i_n]H'(W_n)-\sigma^2H'(W_n)\Big) \Bigr\}
\\&=(a)+(b)+(c).
\end{split}\end{equation}

We bound each respective term and show that they converge to zero.
To begin, note that for all integers $i\leq n$ we have:
\begin{equation}\begin{aligned}
  \text{(a)}&=\EE\Big(K_n^i H(W^i_n)\Big) \\
  &=\EE\Big(\EE\bigl(K_n^i H(W^i_n) \mid X_{\dbracket{n} \setminus i}\bigr)\Big) \\
  &\oversetclap{(a_1)}{=}\EE\Big(H(W^i_n)\EE \bigl(K_n^i \mid X_{\dbracket{n} \setminus i}\bigr)\Big) 
  \oversetclap{(a_2)}{=}0
\end{aligned}\end{equation}
where to get $(a_1)$ we used the fact that $W_{n,i}$ is
$\sigma\Big(X_{\dbracket{n} \setminus i}\Big)$ measurable and to get $(a_2)$ that
$\EE\bigl(K_{n}^i \mid X_{\dbracket{n} \setminus i}\bigr)=0$.

Secondly, we want to upper bound (b). Let us introduce  $D^{i,j}_n \eqdef \nabla_i K_n^j$. We remark that it can also be written in the following fashion:
\begin{equation}
\begin{split}
D^{i, j}_{n}(X) = \calL_n(X_j,\hat{f}_{b_n(l)}(X))-\calL_n(X_j,\hat{f}_{b_n(j)}(X^i)) \\
-\Bigl[R_n\bigl(\hat{f}_{b_n(l)}(X)\bigr)-R_n\big(\hat{f}_{b_n(j)}(X^i)\big)\Bigr].
\end{split}
\end{equation}
As for $K_n^i$, we abuse notations and write $D^{i, j}_n = D^{i,j}_n(X)$. Note that
\begin{align}
W^i_n&=\EE(W_n(X^i) \mid X)
\\&=\EE\bigl[\calL_n(X_i,\hat{f}_{b_n(i)}) + \sum_{\mathclap{j\not\in B_{b_n(i)}^n}}\calL_n(X_j,\hat{f}_{b_n(j)}) \mid X^i\bigr].
\end{align}
This implies that the following equality holds
\begin{align}\label{comfort}
W_n-W^i_n
&=\calL_n(X_i,\hat{f}_{b_n(i)}(X))-R_n\bigl(\hat{f}_{b(i)}(X)\bigr)+\smashoperator[r]{\sum_{j\not\in B_{b_n(i)}^n}} D^{i,j}_n \nonumber \\
&=K_n^i + \smashoperator[r]{\sum_{l \not\in B_{b_n(i)}^n}} D^{i, j}_n.
\end{align}
Therefore for all integers $i\leq n$, by Taylor expansion there is $\tilde W_n \in [W_n,W_n^i]$ such that the following holds:
\begin{equation}\begin{split}
  \label{eq:mal_aux_jambes-theorem-2}
\abs{\text{(b)}}&\leq \abs[\big]{\EE \bigl(K_n^i\big[H(W)-H(W^i_n)-[W_n-W^i_n]H'(W_n)\big]\bigr)}
\\&\leq \frac{1}{2}\abs[\big]{\EE \bigl(K_n^i[W_n-W^i_n]^2 H''(\tilde W_n)\bigr)}
\\&\oversetclap{(a)}{\leq} \EE\Bigl(\abs{K_n^i}(W_n-W^i_n)^2\Bigr) \\
  &\leq \frac{2}{n^{\frac{3}{2}}}\Bigl\{
    \norm{K_n^i}^3_{L_3}
  + \smashoperator[r]{\sum_{j, k \not\in B_{b_n(i)}^n}} \abs[\big]{\EE\bigl(\abs{K_n^i}D^{i,j}_n D^{i,k}_n\bigr)} \Big\}.
\end{split}\end{equation}
where to get (a) we exploited the fact that $H\in \mathcal{F}$ is in the Stein function class and therefore that $\sup_x |H''(x)|\le 2$.
The following upper bound holds:  $\|K_n^i\|_{L_3}^3\le \calS^3$. Therefore if we  upper bound $\sum_{j, k \not\in B_{b_n(i)}^n} \abs[\Big]{\EE\bigl(\abs{K_n^i}D^{i,j}_n D^{i,k}_n\bigr)}$ we will have successfully upperbounded \cref{eq:mal_aux_jambes-theorem-2}.
To do so we proceed differently depending: (i) if $j \neq k$ or if (ii) $j = k$.
We will first work on all indexes $j,k\in \mathbb{N}, j \neq  k$.
We have:
\begin{equation}\label{eq:rory_1-theorem-2}
\begin{split}
\EE\bigl(\abs{K_n^i}D^{i,j}_n D^{i,k}_n\bigr)
&
\oversetclap{(a)}{=}\EE\bigl(D^{i,k}_n~\bigl\{\abs{K_n^i}D^{i,j}_n -\abs{K^i_n(X^k)} D^{i,j}_n(X^k)\bigr\}\bigr)
\\&=\EE\Bigl(D^{i,k}_n \nabla_k\bigl\{\abs{K_n^i} D^{i,j}_n\bigr\} \Bigr),
\end{split}
\end{equation}
where (a) is a consequence of the following equality
\begin{equation}\begin{split}
&\EE\Bigl(D^{i,k}_n\abs{K^i_n(X^k)} D_n^{i,j}(X^k)\Bigr)
\\&=\EE\Bigl[\EE\Bigl(D^{i,k}_n \mid X_{\dbracket{n} \setminus \{k\}}\Bigr)\times\EE\Bigl(\abs{K^i_n(X^k)}D^{i,j}_n (X^k) \mid X^k\Bigr)\Bigr]
\\&=0.
\end{split}\end{equation}
As we have: $\nabla_k\bigl\{\abs{K_n^i}D^{i,j}_n\}=\abs{K_n^i}\nabla_k\bigl\{D^{i,j}_n\}+D^{i,j}_n(X^k)\nabla_k\bigl\{\abs{K_n^i}\}$, we can write:
\begin{equation}\label{luke_1}
\EE\Bigl(D^{i,k}_n \nabla_k \bigl\{\abs{K_n^i}D^{i,j}_n\} \Bigr)
=\EE\Bigl(\abs{K_n^i} D^{i,k}_n \nabla_k \bigl\{D^{i, j}_n\bigr\}\Bigr)
+\EE\Bigl(D^{i,j}(X^k) D^{i,k}_n \nabla_k\bigl\{\abs{K^i(X)}\bigr\}  \Bigr).
\end{equation}
Using a similar argument than in \cref{eq:rory_1-theorem-2}, we can bound the first term on the right hand side of \cref{luke_1} in the following way:
\begin{meqn}
&\abs[\Big]{\EE\Bigl(\abs{K_n^i} D^{i,k}_n ~ \nabla_k \bigl\{D^{i,j}_n\bigr\}\Bigr)}
\\&=\abs[\Big]{ \EE\Bigl(\nabla_j\bigl\{\abs{K_n^i} D^{i,k}_n\bigr\} ~ \nabla_k \{D^{i,j}_n \} \Bigr)}
\\&= \abs[\Big]{\EE\Bigl(\abs{K_n^i} ~ \nabla_j \{ D^{i,k}_n\} ~ \nabla_k \{ D^{i,j}_n \} \Bigr)
  +\EE\Bigl(D^{i,k}(X^j) ~ \nabla_j \{ \abs{K_n^i} \} ~ \nabla_k \{ D^{i,j}_n \} \Bigr)}
\\&\leq\max_{l\in N_n}\norm[\big]{\calL_n(\tilde{X},f_{l,n}(X_{1:l}))}_{L_3}
    \norm[\big]{\beta_2^{l,n}(X^n)}^2_{L_3}
\\&\quad+ \max_{l\in N_n}\norm[\big]{\beta_{1}^{l,n}\big(X^n\big)}^2_{L_3}
    \norm[\big]{\beta^{l,n}_{1,2}\big(X^n\big)}_{L_3}
\\&\le \frac{1}{n^2}\Bigl\{\calS\epsilon_n(\nabla)^2+\epsilon_n(\Delta)^2\epsilon_n(\nabla)\Bigr\}.
\end{meqn}
Moreover we have:
\begin{equation}
\EE\Bigl(D^{i,j}_n(X^k) D^{i,k}_n \: \nabla_k K^i_n  \Bigr)\le \frac{\epsilon_n(\Delta)^3}{n^{\frac{3}{2}}}.
\end{equation}
This implies that
\begin{equation}\label{eq:luke-theorem-2}
\Big|\EE\bigl( D^{i,k}_n \nabla_k \{ \abs{K_n^i} D^{i,j}_n \} \bigr)\Big|
\leq \frac{\epsilon_n(\Delta)^2\epsilon_n(\nabla)+\calS\epsilon_n(\nabla)^2}{n^2}+\frac{\epsilon_n(\Delta)^3}{n^{\frac{3}{2}}}.
\end{equation}
This establishes a bound for $\Big|\EE\bigl( D^{i,k}_n  \nabla_k \{ \abs{K_n^i} D^{i,j}_n \}  \bigr)\Big|$ if $j\ne k$. 
On the other hand, if $j=k$ then we have:
\begin{equation}
\EE\Bigl(\abs{K_n^i} (D^{i,j}_n)^2\Bigr) \leq \frac{\epsilon_n(\Delta)^2\calS}{n}.
\end{equation}
We therefore obtain that:
\begin{equation}
\abs{\text{(b)}}\leq\frac{2}{\sqrt{n}}\Big[\calS^3+\epsilon_n(\Delta)^2\calS+\epsilon_n(\Delta)^2\epsilon_n(\nabla)+\calS\epsilon_n(\nabla)^2+\sqrt{n}\epsilon_n(\Delta)^3\Big].
\end{equation}  

Now define:
\begin{equation}
  \delta_n \eqdef \frac{2}{\sqrt{n}}\Big[\calS^3+\epsilon_n(\Delta)^2\calS+\epsilon_n(\Delta)^2\epsilon_n(\nabla)+\calS\epsilon_n(\nabla)^2+\sqrt{n}\epsilon_n(\Delta)^3\Big].
\end{equation}
then we have that:
\begin{equation}
  \abs[\Big]{d_W(W_n,\sigma Z)-\sup_{H\in \mathcal{F}} \abs[\big]{\EE\bigr(H'(W_n)K_i^n\big[W_n-W_n^i\big]-\sigma^2H'(W_n)\bigl)} }\leq \delta_n.
\end{equation}
Therefore it suffices to upper bound:
\begin{equation}\label{avancer_1}
\sup_{H\in \mathcal{F}} \abs[\Big]{\EE\Big(H'(W_n)K_i^n\big[W_n-W_n^i\big]-\sigma^2H'(W_n)\Big)}.
\end{equation}
Let us bound the previous quantity:
\begin{align*}
&\abs[\Big]{\EE\Big(H'(W_n)\Bigl[\sum_{i\le n}K_i^n[W_n-W^i_n]-\sigma^2_1\Bigr]\Big)}
\\&\oversetclap{(a)}{\le}\norm[\Big]{\sum_{i\le n}K_i^n[W_n-W^i_n]-\sigma^2_1}_{L_1}
\\&\oversetclap{(b)}{\le}\frac{1}{n}\sum_{i\le n}\norm[\big]{K_n^i \sum_{k\not\in B_{b_n(i)}^n} D^{i,k}_n }_{L_1}
+\Big[\Var\Big(\sum_{i\le n} K_n^i \Big)\Big]^{\frac{1}{2}}
\\&\quad+\frac{1}{n}\sum_{i\le n} \abs[\Big]{\EE\Bigl[\Var \bigl(\calL(X_i,\hat{f}_{b_n(i)})\mid \hat{f}_{b_n(i)}\bigr)\Bigr]-\sigma^2_1}
\\&\leq (d)+(e)+\max_{l \in N_n}\abs[\big]{\sigma_{l,n}^2-\sigma^2_1},
\end{align*}
where (a) is due to the fact that $\abs{H'(u)} \leq 1$ for all $u \in \RR$ and $H \in \calF$,
and where (b) is a consequence of \cref{comfort} and the triangle inequality.

We  start by bounding the term $(d)$. By the Cauchy-Schwarz inequality we have:
\begin{equation}
  \frac{1}{n}\sum_{i\le n}\norm[\big]{K_n^i \smashoperator[r]{\sum_{l\not\in B_{b_n(i)}^n}} D^{i,l}_n}_{L_1}
 \leq \max_{i\le n}\norm{K_n^i}_{L_2}\norm[\big]{\smashoperator[r]{\sum_{l\not\in B_{b_n(i)}^n}} D^{i,l}_n}_{L_2}
\end{equation}
By a similar argument as in \eqref{eq:rory_1-theorem-2}, we can establish that
\begin{equation}
\begin{split}
\norm[\Big]{\smashoperator[r]{\sum_{l\not\in B_{b_n(i)}^n}} D^{i,l}_n}_{L_2}^2
&= \smashoperator[r]{\sum_{j,l\not\in B_{b_n(i)}^n}} \EE\Big(D^{i,l}_n D^{i,j}_n \Big)
\\&= \sum_{j\ne l}\EE\Bigl( (\nabla_j D^{i,l}_n) D^{i,j}_n \Bigr) + \sum_{j \leq n}\EE\bigl[(D^{i,j}_n)^2\bigr]
\\&= \sum_{j\ne l}\EE\Bigl( \nabla_j D^{i,l}_n \: \nabla_l D^{i,j}_n \Bigr) + \sum_{j \leq n} \EE\bigl[(D^{i,j}_n)^2\bigr]
\\&\leq\epsilon_n(\nabla)^2+ \epsilon_n(\Delta)^2.
\end{split}
\end{equation}
Therefore we have that $\abs{(d)}\leq \calS\Big[\epsilon_n(\Delta)+\epsilon_n(\nabla)\Big]$.
Finally, we finish by bounding $(e)$.
By Efron-Stein we have:
\begin{equation}\begin{split}
  (e)^2
  &\leq\frac{1}{n^2}\Var\Big(\sum_{i \leq n} K_n^i\Big)
\\&\leq \frac{1}{n^2}\sum_{j\leq n}\EE \Big[\bigl(\sum_{i\le n} \nabla_j K_n^i \bigr)^2\Big]
\\&= \frac{1}{n^2}\sum_{j\leq n}\sum_{i,k\leq n}\EE\Bigl[ \nabla_j \{K_n^i\} ~ \nabla_j \{K_n^k\} \Bigr]
\\&\leq \frac{1}{n^2}\sum_{j\leq n}\Big[\sum_{i\leq n}\norm[\big]{\nabla_j K_n^i}_{L_2}\Big]^2
\\&\leq \calS^2\epsilon_n(\Delta)^2
\end{split}\end{equation}

Therefore we established that 
\begin{equation}\begin{split}
  d_W(W_n,\sigma Z)&\le \frac{2}{\sqrt{n}}\Big[\calS^3+\epsilon_n(\Delta)^2\calS+\epsilon_n(\Delta)^2\epsilon_n(\nabla)+\calS\epsilon_n(\nabla)^2+\sqrt{n}\epsilon_n(\Delta)^3\Big]
  \\&\quad+\max_{l \in N_n}\abs*{\sigma_{l,n}^2-\sigma^2_1}+ \calS \big[\epsilon_n(\nabla)+2\epsilon(\Delta)\big]
.\end{split}\end{equation}
\end{proof}

\subsection{Proof of theorem~\ref{clt3}}
\begin{proof}
    Our proof is based on Stein's method for central limit theorems.
    We present the main proof in the case of the cross-validated risk.
    As the proof for the split risk is similar, we present it inline marked by a margin delineator on the left.
    For ease of notation we drop the superindex $n$ from the processes: For exemple we write $X$ instead of $X^n$ and $X^i$ instead of $X^{n,i}$.

Let $(F_i)$ denote filtration given by: $F_i \eqdef \sigma(X_1,\dots,X_i)$, and 
let $(W_n), (W^i_n)$ be defined as:
\begin{align}
  W_n(X) &\eqdef \frac{1}{\sqrt{n}}\sum_{j\leq n}\calL(X_j,\hat{f}_{b(j)}(X))-R_{\abs{B_{b_n(j)}}}, \\
  W^i_n(X) &\eqdef \EE(W_n \mid X_{\dbracket{n}\setminus i}).
\end{align}
Similarly as in theorem 2 we abuse notations and  write $W_n \eqdef W_n(X)$ and similarly $W^i_n \eqdef W^i_n(X)$,
and define $\Delta_{i,n} \eqdef  \EE(W_n \mid F_i)-\EE(W_n \mid F_{i-1})$.
Before diving into the proof we note that
\begin{equation}\label{pluie}
  \EE\bigl(W_n \mid F_i\bigr)-\EE\bigl(W_n(X^i) \mid F_i\bigr)
  = \EE\bigl(W_n \mid F_i\bigr)-\EE\bigl(W_n \mid F_{i-1} \bigr)=\Delta_{i,n},
\end{equation}
and
\begin{equation}\label{wind}
  \EE\bigl[W_n \mid X \bigr] - \EE\bigl[W_n(X^i) \mid X\bigr]=W_n-W^i_n.
\end{equation}

We are now ready to dive into the main body of the proof. We recall the
principle lemma of Stein's method. Let $Z \sim \calN(0, 1)$, then we have:
\begin{equation}
  d_W(W_n, \sigma_{\rm{cv}} Z) \leq \sup_{H\in \mathcal{F}} \abs[\big]{\EE\bigl(W_n H(W_n)-\sigma^2_{\rm{cv}}H'(W_n)\bigr)}.
\end{equation}
 For any function $H\in \mathcal{F}$ this can be re-expressed as:
\begin{equation}\begin{split}
  &\label{eq:triste-theorem3}\EE\Big(W_nH(W_n)-\sigma^2_{\rm{cv}}H'(W_n)\Big) \\
  &=\sum_{i \leq n}\Big\{\EE\Big(\Delta_{i,n}H(W^i_n)\Big)+ \EE\Big(\Delta_{i,n}\big[H(W_n)-H(W^i_n)-[W_n-W^i_n]H'(W_n)\big]\Big)\\
  &\qquad +\mathbb{E}\Big(\Delta_{i,n}[W_n-W^i_n]H'(W_n)-\sigma^2_{\rm{cv}}H'(W_n)\Big)\Big\}
\end{split}\end{equation}

We will bound each term and show that it converges to zero. 
Similarly as in the proof of \cref{clt2}, for all integer $i\le n$ we have:
\begin{align*}
\EE\Bigl[\Delta_{i,n}H(W^i_n)\Bigr]
&=\EE\Bigl[\EE\bigl(\Delta_{i,n}H(W^i_n) \mid X_{\dbracket{n}\setminus \{i\}}\bigr)\Bigr]
\\&\oversetclap{(a)}{=}\EE\Bigl[H(W^i_n)\EE\bigl(\Delta_{i,n} \mid X_{\dbracket{n}\setminus \{i\}}\bigr)\Bigr]
\\&\oversetclap{(b)}{=}0,
\end{align*}
where to get (a) we used the fact that $W_{n}^i$ is measurable with respect to $\sigma\bigl(X_{\dbracket{n} \setminus \{i\}}\bigr)$
and to get (b)  that $\EE\bigl[\Delta_{i,n}\mid X_{\dbracket{n}\setminus \{i\}}\bigr]=0$.
Having bounded the first term of \cref{eq:triste-theorem3} we now wish to upper-bound
$\abs[\big]{\EE\bigl(\Delta_{i,n}\big[H(W)-H(W^i_n)-[W_n-W^i_n]H'(W_n)\big]\bigr)}$.
Let us write:
\begin{equation}\begin{split}
D^{i,l}_n(X) &\coloneqq \calL_n(X_l,\hat{f}_{b_n(l)}(X))-\calL_n(X_l,\hat{f}_{b_n(l)}(X^i))
  -\Big[R_n\big(\hat{f}_l(X^{n})\big)-R_n\big(\hat{f}_l(X^i)\big)\Big],\\
  K_n^i(X) &\eqdef \EE\Bigl[\calL_n(X_i,\hat{f}_{b_n(i)}(X)) \mid F_i\Bigr]
    - \EE\Bigl[\calL_n(X_i,\hat{f}_{b_n(i)}(X)) \mid F_{i-1}\Bigr], \\
  \calR^n_i(X) &\eqdef \sum_{k \neq b_n(i)} \abs{B_k} \EE\bigl[ R_n(\hat{f}_k(X)) \mid F_i \bigr]
   - \sum_{k \neq b_n(i)} \abs{B_k} \EE\bigl[ R_n(\hat{f}_k(X)) \mid F_{i - 1} \bigr].
\end{split}\end{equation}
We remark that:
\begin{align}
  \sqrt{n}W_n = \sum_{i\le n}\Bigl(\calL_n(X_i,\hat{f}_{b_n(i)})-R(\hat{f}_{b_n(i)})
    +R(\hat{f}_{b_n(i)})-R_{n-|B^n_{b_n(i)}|}\Bigr).
 \end{align}
Using the previously established fact that $\Delta_{i,n}=\EE\big[W_n-W_{n}(X^i) \mid F_i\big]$ we obtain that:
\begin{equation}
 \begin{split}\label{007}
  \sqrt{n}\Delta_{i,n}
  &=K_n^i + \calR^n_i+ \smashoperator[r]{\sum_{l\not\in B_{b_n(i)}^n}} \EE\bigl[D^{i,l}_n \mid F_i\bigr],
\end{split}
\end{equation}
from which we deduce that:
\begin{equation}
 \begin{split}\label{uk}
  \sqrt{n}\nabla_i W_n
  &=\calL_n(X_i,\hat{f}_{b_n(i)}) -\calL_n(X'_i,\hat{f}_{b_n(i)})
    + \smashoperator[r]{\sum_{k\ne b_n(i)}} \abs{B_k}\nabla_i R_n(\hat{f}_k) + \smashoperator[r]{\sum_{l\not\in B_{b_n(i)}^n}}D^{i,l}_n .
\end{split}
\end{equation}

\begin{proofaside}
In the case of the split risk, we obtain different formulation for $\Delta_{i,n}$ depending on which block the $i$th observation belongs to.
Indeed we have that:
\begin{equation}
    \sqrt{n} \Delta_{i, n} = \begin{dcases}
         K^i_n & \text{if } i \in B_1, \\
         \calR^i_n + \sum_{\mathclap{l \not\in B^n_{b_n(i)}}} \EE\bigl[D^{i,l}_n \mid F_i\bigr] & \text{otherwise,}
    \end{dcases}
\end{equation}
and similarly:
\begin{equation}
    \sqrt{n}\bigl(W_n - W_n(X^i)\bigr) = \begin{dcases}
        \calL(X_i, \hat{f}_{b_n(i)}) - \calL(X_i', \hat{f}_{b_n(i)}) & \text{if } i \in B_1, \\
        \smashoperator[r]{\sum_{k \neq b_n(i)}} \abs{B_k}\bigl(\nabla_i R(\hat{f}_k)\bigr) + \sum_{\mathclap{l \not\in B^n_{b_n(i)}}} D^{i,l}_n & \text{otherwise.}
    \end{dcases}
\end{equation}
\end{proofaside}

Our goal is to bound $\abs[\Big]{\EE\Big(\Delta_{i,n}\big[H(W)-H(W^i_n)-[W_n-W^i_n]H'(W^i_n)\big]\Big)}$.
To do so we use two version of Taylor's inequality.
Let $f\in C^2$ defined on the reals, then for all $x\in\RR$ the following holds:
\begin{align}
  \abs[\big]{f(x+h)-f(x)-hf'(x)} &\leq \frac{h^2}{2}\sup_t \abs[\big]{f''(t)}, \\
  \abs[\big]{f(x+h)-f(x)-hf'(x)} &\leq 2 h\sup_t \abs[\big]{f'(t)}.
 \end{align}
Using those inequalities and \cref{007} we obtain that:
\begin{meqn}\label{eq:mal_aux_jambes-theorem-3}
  &n^{\frac{3}{2}} \abs[\Big]{\EE\Big(\Delta_{i,n}\big[H(W)-H(W^i_n)-(W_n-W^i_n)H'(W^i_n)\big]\Big)}
  \\&\le n \abs[\Big]{\EE\Bigl[ \bigl( K_i^n + \calR^n_i+ \sum_{\mathclap{l\not\in B_{b_n(i)}^n}} \EE\bigl[D^{i,l}_n \mid F_i\bigr]\bigr) \times \bigl(H(W)-H(W^i_n)-(W_n-W^i_n)H'(W^i_n)\bigr)\Bigr]}
\\&\leq  n\EE\Bigl[\abs{K_n^i}\bigl(\nabla_i W_n\bigr)^2\Bigr]
+2n\EE\Bigl[\abs[\big]{\nabla_i W_n} \abs[\big]{\smashoperator[r]{\sum_{l\not\in B_{b_n(i)}^n}} \EE\bigl[D^{i,l}_n \mid F_i\bigr]} \Bigr]
    +n\EE\Bigl[\abs{\calR_i^n} \bigl(\nabla_i W_n\bigr)^2\Bigr]
\\&\le (a_i)+(b_i)+(c_i).
\end{meqn}

\begin{proofaside}
In the case of the train-test split risk this inequality simplifies in the following way:
\begin{meqn}
  &\bigl(\frac{n}{K_n}\bigr)^{\frac{3}{2}} \abs[\Big]{\EE\Big(\Delta_{i,n}\big[H(W)-H(W^i_n)-(W_n-W^i_n)H'(W^i_n)\big]\Big)}
\\&\leq
\mathbb{I}(i\not\in B_1)\frac{n}{K_n}\Bigl\{2\EE\Bigl( \abs[\big]{\nabla_i W_n} \abs[\big]{\sum_{\mathclap{l\not\in B_{b_n(i)}^n}} \EE \bigl[D^{i,l}_n \mid F_i\bigr]}  \Bigr)
+\EE\Bigl(\abs{\calR_i^n} \bigl(\nabla_i W_n\bigr)^2\Bigr)\Bigr\}
\\&\quad +\mathbb{I}\Bigl(i\in B_1) \frac{n}{K_n}\EE\Big(\abs{K_n^i}\bigl(\nabla_i W_n\bigr)^2\Bigr)
\\&\le (\tilde b_i)+(\tilde c_i)+(\tilde a_i).
\end{meqn}
\end{proofaside}

We bound each term in succession.
First by using the inequality $(x_1+x_2+x_3)^2\leq 3 (x_1^2+x_2^2+x_3^2)$ and \cref{uk} we obtain that:
\begin{meqn}
  (a_i)
  &\le3 \norm{K_n^i}_{L_3}^3
  +3\sum_{j,k} \EE\Bigl[\abs{K_n^i} D^{i,j}_n D^{i,k}_n \Bigr]
\\&\quad
+3\sum_{j,k} \abs{B_j} \abs{B_k} \EE \Bigl[\abs{K_n^i} \nabla_i R_n(\hat{f}_j) \:  \nabla_i R_n(\hat{f}_k) \Bigr].
\label{worthless}
\end{meqn}
\begin{proofaside}
In the case of the split risk we have:
\begin{equation}
(\tilde a_i)\le \mathbb{I}(i\in B_1)\calS^3.
\end{equation}
\end{proofaside}
The first term of \cref{worthless} is easily bounded by noting that $\norm{K_n^i}_{L_3}^3\le \calS^3$.
We want to bound the second term.
To do so we bound each $\EE\bigl(|K_n^i|D_n^{i,j}D_n^{i,k}\bigr)$ term.
We proceed differently when $k=j$ than when $k\ne j$.
We will first work on all indexes $j,k\in \mathbb{N}, j \neq  k$:
\begin{meqn}
\label{eq:rory_1-theorem-3}
\EE\bigl(\abs{K_n^i}D^{i,j}_n D^{i,k}_n\bigr)
&\oversetclap{(a)}{=}\EE\bigl(D^{i,k}_n~\big[\abs{K_n^i}D^{i,j}_n ~-\abs{K^i_n(X^k)} D^{i,j}_n(X^k)\big]\bigr)
\\&=\EE\Bigl(D^{i,k}_n \nabla_k\bigl\{\abs{K^i_n} D^{i,j}_n\bigr\} \Bigr),
\end{meqn}
where (a) is a consequence of the following equality
\begin{meqn}
&\EE\Bigl(D^{i,k}_n\abs{K_n^i(X^k)} D^{i,j}_n(X^k)\Bigr)
\\&=\EE\Bigl[\EE\Bigl(D^{i,k}_n \mid X_{\dbracket{n} \setminus \{k\}}\Bigr)\times\EE\Bigl(\abs{K^i_n(X^k)}D^{i,j}_n (X^k) \mid X^k\Bigr)\Bigr]
\\&=0.
\end{meqn}
Now, from the equality:
\begin{equation}
\nabla_k\bigl\{\abs{K^i_n}D^{i,j}_n\}=\abs{K_n^i}\nabla_k\bigl\{D^{i,j}_n\}+D^{i,j}_n(X^k)\nabla_k\bigl\{\abs{K^i_n}\},
\end{equation}
we may deduce that:
\begin{equation}\label{eq:luke_1-theorem-3}
\EE\Bigl(D^{i,k}_n \nabla_k \bigl\{\abs{K_n^i}D^{i,j}_n\} \Bigr)
=\EE\Bigl(\abs{K^i_n} D^{i,k}_n \nabla_k \bigl\{D^{i, j}_n\bigr\}\Bigr)
+\EE\Bigl(D^{i,j}(X^k) D^{i,k}_n \nabla_k\bigl\{\abs{K^i_n(X)}\bigr\}  \Bigr).
\end{equation}
Using a similar argument than in \cref{eq:rory_1-theorem-3}, we can bound the first term on the right hand side of \cref{eq:luke_1-theorem-3} in the following way:
\begin{meqn}
&\abs[\Big]{\EE\bigl(\abs{K_n^i} D^{i,k}_n ~ \nabla_k \bigl\{D^{i,j}_n\bigr\}\bigr)}
\\&=\abs[\Big]{ \EE\Bigl(\nabla_j\bigl\{\abs{K_n^i} D^{i,k}_n\bigr\} ~ \nabla_k \{D^{i,j}_n \} \Bigr)}
\\&= \abs[\Big]{\EE\Bigl(\abs{K_n^i} ~ \nabla_j \{ D^{i,k}_n\} ~ \nabla_k \{ D^{i,j}_n \} \Bigr)
  +\EE\Bigl(D^{i,k}(X^j) ~ \nabla_j \{ \abs{K_n^i} \} ~ \nabla_k \{ D^{i,j}_n \} \Bigr)}
\\&\leq\max_{l\in N_n}\norm[\big]{\calL_n(\tilde{X},f_{l,n}(X_{1:l}))}_{L_3}
    \norm[\big]{\beta_2^{l,n}\big(X^n\big)}^2_{L_3}
+ \max_{l\in N_n}\norm[\big]{\beta_1^{l,n}(X^n)}^2_{L_3}
    \norm[\big]{\beta_2^{l,n}}_{L_3}
\\&\le \frac{\calS\epsilon_n(\nabla)^2+\epsilon_n(\Delta)^2\epsilon_n(\nabla)}{n^2}.
\end{meqn}
Moreover, we have that $\EE\Bigl(D^{i,j}(X^k) D^{i,k}_n \nabla_k\bigl\{\abs{K^i(X)}\bigr\}  \Bigr) \leq n^{-3/2} \epsilon_n(\Delta)^3$.
This establishes the following bound for $j \neq k$:
\begin{equation}\label{eq:luke-theorem-3}
\abs[\Big]{\EE\bigl( D^{i,k}_n \nabla_k \{ \abs{K_n^i} D^{i,j}_n \} \bigr)}
\leq
\frac{\epsilon_n(\Delta)^2\epsilon_n(\nabla)+\calS \epsilon_n(\nabla)^2}{n^2}+\frac{\epsilon_n(\Delta)^3}{n^{\frac{3}{2}}}.
\end{equation}
On the other hand, if $j=k$ then we have $\EE\Bigl(\abs{K_n^i} (D^{i,j}_n)^2\Bigr) \leq \frac{\epsilon_n(\Delta)^2\calS}{n}$.
Moreover, the last term of \cref{worthless} can be bounded using Cauchy-Schwarz inequality:
\begin{equation}\begin{split}
  &\sum_{j,k}\abs{B_k}\abs{B_j} \EE \Bigl[\abs{K^i_n} \: \nabla_i R_n(\hat{f}_j) \: \nabla_i R_n(\hat{f}_k)\Bigr]
\le \calS\times\calS(R)^2.
\end{split}\end{equation}
Putting everything together, we obtain the following bound for $(a_i)$:
\begin{equation}\begin{split}&
(a_i)\le \calS\times\calS(R)^2+ \calS^3+  \epsilon_n(\Delta)^2\epsilon_n(\nabla)+\calS\epsilon_n(\nabla)^2+\sqrt{n}\epsilon_n(\Delta)^3+\epsilon_n(\Delta)^2\calS.
\end{split}\end{equation}
We now focus on the term $(b_i)$. By the Cauchy-Schwarz inequality we have that:
\begin{meqn}
  (b_i)
  &\leq n~\EE\Bigl[\abs[\big]{ \smashoperator[r]{\sum_{l\not\in B_{b_n(i)}^n}} D^{i,l}_n } \times \abs[\big]{W_n-W^i_n} \Bigr]
\\&\leq n~\norm[\Big]{\smashoperator[r]{\sum_{l\not\in B_{b_n(i)}^n}} D^{i,l}_n }_{L_2} \norm[\Big]{W_n-W^i_n}_{L_2}
\\&\oversetclap{(a)}{\le} \sqrt{n} \norm[\big]{\smashoperator[r]{\sum_{\quad l\not\in B_{b_n(i)}^n} }D^{i,l}_n}_{L_2}
\Bigl\{ \norm[\big]{ \smashoperator[r]{\sum_{l\not\in B_{b_n(i)}^n}} D^{i,l}_n }_{L_2}
  + \norm[\big]{K^i_n}_{L_2}
  +\norm[\big]{\sum_{k} \abs{B_k} \nabla_i R_n(\hat{f}_k)}_{L_2}\Bigr\},
  \label{os}
\end{meqn}
where to get (a) we used \cref{uk}.
Moreover we have that:
\begin{meqn}
  \norm[\Big]{\smashoperator[r]{\sum_{l\not\in B_{b_n(i)}^n}} D^{i,l}_n}_{L_2}^2
&\le \smashoperator[r]{\sum_{j,l\not\in B_{b_n(i)}^n}} \EE\Bigl[D^{i,l}_n D^{i,j}_n\Bigr]
\\&\le \sum_{j\ne l} \EE\Bigl[\bigl(\nabla_j D^{i,l}_n\bigr) \: D^{i,j}_n\Bigr] + \sum_{j}\EE\Bigl[\bigl(D^{i,j}_n\bigr)^2\Bigr]
\\&\le \sum_{j\ne l}\EE\Bigl[\nabla_j D^{i,l}_n \: \nabla_l D^{i,j}_n \Bigr] + \epsilon_n(\Delta)^2
\\&\le\epsilon_n(\nabla)^2+ \epsilon_n(\Delta)^2.
\end{meqn}
Therefore using \cref{os} we obtain:
\begin{equation}
  (b_i) \le \sqrt{n}\Bigl(\epsilon_n(\nabla)+ \epsilon_n(\Delta)\Bigr) \Bigl(\epsilon_n(\nabla)+ \epsilon_n(\Delta)+\calS+\calS(R)\Bigr).
\end{equation}
\begin{proofaside}
In the case of the split risk we obtain:
\begin{equation}
(\tilde{b}_i) \le \sqrt{\frac{n}{K_n^3}}\Bigl(\epsilon_n(\Delta)+\frac{\epsilon_n(\nabla)}{\sqrt{K_n}}\Bigr)
\Bigl(\frac{\epsilon_n(\nabla)+\calS(R)}{\sqrt{K_n}}+\epsilon_n(\nabla)\Bigr) + o(\frac{1}{n}),
\end{equation}
where the term $o(n^{-1})$ is a correction term coming from:
$\abs[\big]{\frac{|B_1^n|}{n}-\frac{1}{K_n}} \le \frac{1}{n}$. 
\end{proofaside}

Finally, the term $(c_i)$ is bounded using \cref{uk} and
the Cauchy-Schwarz inequality in the following fashion:
\begin{meqn}
  (c_i)
  &\le  \calS(R)\norm[\big]{W_n-W_n(X^{i})}_{L_2}^2
\\&\le  \calS(R)\times \Big[\epsilon_n(\nabla)+ \epsilon_n(\Delta) + \calS + \calS(R)\Big]^2.
\end{meqn}

\begin{proofaside}
In the case of the split risk we obtain:
\begin{equation}
(\tilde{c_i})\le  \frac{ \calS(R)}{K_n^{\frac{3}{2}}}\Bigl[\frac{\epsilon_n(\nabla)+\calS(R)}{\sqrt{K_n}}+\epsilon_n(\Delta)\Big]+o(\frac{1}{n}),
\end{equation}
where the $o(n^{-1})$ is a correction term coming from: $\abs[\big]{\frac{|B_1^n|}{n}-\frac{1}{K_n}}\le \frac{1}{n}$.
\end{proofaside}
This implies that:
\begin{equation}\begin{split}\label{mal_aux_jambes_sol}
  &n^{-\frac{3}{2}} \sum_{i\le n}\Big|\mathbb{E}\Big(\Delta_{i,n}\big[H(W)-H(W^i_n)-[W_n-W^i_n]H'(W^i_n)\big]\Big)\Big|
\\&\le \epsilon_n(\Delta)^3+\Big[\epsilon_n(\nabla)+ \epsilon_n(\Delta)\Big]\times \Big[\epsilon_n(\nabla)+ \epsilon_n(\Delta)+\calS+\calS(R)\Big]
\\&\quad+\frac{1}{\sqrt{n}}\Big\{  \epsilon_n(\Delta)^2\epsilon_n(\nabla)+ \big[\calS(R)+\calS\big]\times \Big[\epsilon_n(\nabla)+ \epsilon_n(\Delta)+\calS+\calS(R)\Big]^2\Big\} 
\end{split}\end{equation}
\begin{proofaside}
In the case of the split risk we obtain
\begin{equation}\begin{split}
  &\sum_{i\le n}\Big|\EE\Big(\Delta_{i,n}\big[H(W)-H(W^i_n)-[W_n-W^i_n]H'(W^i_n)\big]\Big)\Big|
\\&\le \frac{K_n^{\frac{1}{2}}}{n^{\frac{1}{2}}}\calS^3+ \Big[\epsilon_n(\Delta)+\frac{\epsilon_n(\nabla)}{\sqrt{K_n}}+\frac{\calS(R)}{\sqrt{n}}\Big]\Big[\epsilon_n(\Delta)+\frac{\epsilon_n(\nabla)+\calS(R)}{\sqrt{K_n}}\Big]+o(\frac{1}{n}).
\end{split}\end{equation}
\end{proofaside}
Therefore if we denote \begin{equation}\begin{split}
&\delta_n \eqdef \epsilon_n(\Delta)^3+\Big[\epsilon_n(\nabla)+ \epsilon_n(\Delta)\Big]\times \Big[\epsilon_n(\nabla)+ \epsilon_n(\Delta)+\calS+\calS(R)\Big]\\&\qquad+\frac{1}{\sqrt{n}}\Big\{  \epsilon_n(\Delta)^2\epsilon_n(\nabla)+ \big[\calS(R)+\calS\big]\times \Big[\epsilon_n(\nabla)+ \epsilon_n(\Delta)+\calS+\calS(R)\Big]^2\Big\},
\end{split}\end{equation}
then  we know that:
\begin{equation}\begin{split}&
\abs[\Big]{d_W(W_n,\sigma Z)-\sup_{H\in \mathcal{F}} \abs[\big]{\EE\bigl(H'(W_n)\Delta_i^n\big[W_n-W_n^i\big]-\sigma^2_{\rm{cv}}H'(W_n)\bigr)}}\le \delta_n.
\end{split}\end{equation}
\begin{proofaside}
In the case of the split risk we write instead:
\begin{equation}\begin{split}&
\delta_n \eqdef \frac{K_n^{\frac{1}{2}}}{n^{\frac{1}{2}}}\calS^3+ \Big[\epsilon_n(\Delta)+\frac{\epsilon_n(\nabla)+\calS(R)}{\sqrt{K_n}}\Big]^{2}+o(\frac{1}{n}),
\end{split}\end{equation}
then we know that
\begin{equation}\begin{split}&
\abs[\Big]{d_W(W_n,\sigma Z) - \sup_{H\in \calF} \abs[\big]{\EE\bigl[H'(W_n)\Delta_i^n\big[W_n-W_n^i\big]-\sigma^2_{\rm{split}}H'(W_n)\bigr]}}\le \delta_n.
\end{split}\end{equation}
\end{proofaside}
Therefore, it remains for us to upper bound:
\begin{equation}
    \sup_{H\in \mathcal{F}}\abs[\Big]{\EE\Big(H'(W_n)\Delta_{i,n}\big[W_n-W_n^i\big]-\sigma^2_{\rm{cv}}H'(W_n)\Big)}
\end{equation}
Towards this goal, we denote:
\begin{equation}
    g_{n,i}(X) \eqdef \calL(X_i,\hat{f}_{b_n(i)}) 
    +\sum_{j\ne{b_n(i)}}|B_j^n|R_n(\hat{f}_{j}).
\end{equation} 
To upper bound the desired quantity it suffices to upper-bound:
\begin{equation}\begin{split}&
\abs[\big]{\EE \Big(H'(W)\Big[\sum_{i\le n} \Delta_{i,n} [W_n-W^i_n] - \sigma^2_{\rm{cv}}\Big]\Big)}
\\&
\oversetclap{(a)}{\le} \norm[\Big]{\sum_{i\le n} \Delta_{i,n} [W_n-W^i_n] - \sigma^2_{\rm{cv}}}_{L_1}
\\&
\le \norm[\Big]{\sum_{i\le n}\Delta_{i,n}[W_n-W^i_n]-\frac{1}{n}\EE \Big[\Cov \Bigl(g_{n,i}, \, \EE \big(g_{n,i} \mid F_i\big) \Bigm| X_{ \dbracket{n} \setminus \{i\}}\Bigr)\Big]}_{L_1}
\\&\quad + \frac{1}{n}\norm[\Big]{\sum_{i \leq n}\EE \bigl[\Cov \big(g_{n,i}, ~\EE\big(g_{n,i} \mid F_i\big) \bigm| X_{ \dbracket{n} \setminus \{i\}}\big)\bigr]-\sigma^2_{\rm{cv}}}_{L_1}
\\&\le (d)+(e),
\end{split}\end{equation}
where to get (a) we used the fact that $H$ belongs to the Stein function class, which implies that $\sup_{x\in \mathbb{R}}\big|H'(x)\big|\le 1$.

\begin{proofaside}
Respectively for the split risk we have:
\begin{meqn}
&\abs[\big]{\EE \Big(H'(W)\Big[\sum_{i\le n} \Delta_{i,n} [W_n-W^i_n] - \sigma^2_{\rm{split}}\Big]\Big)}
\\&
\oversetclap{(a)}{\le} \norm[\Big]{\sum_{i\le n} \Delta_{i,n} [W_n-W^i_n] - \sigma^2_{\rm{split}}}_{L_1}
\\&
\le \norm[\Big]{\sum_{i\le n}\Delta_{i,n}[W_n-W^i_n]-\frac{1}{n}\EE \Big[\Cov \Bigl(g_{n,i}, \, \EE \big(g_{n,i} \mid F_i\big) \Bigm| X_{ \dbracket{n} \setminus \{i\}}\Bigr)\Big]}_{L_1}
\\&\quad + \frac{K_n}{n}\norm[\Big]{\sum_{i \leq n}\EE \bigl[\Cov \big(g_{n,i}, \, \EE\big(g_{n,i} \mid F_i\big) \bigm| X_{ \dbracket{n} \setminus \{i\}}\big)\bigr]-\sigma^2_{\rm{split}}}_{L_1}
\\&\le (d)+(e).
\end{meqn}
\end{proofaside}

We will bound each term successively.
In this goal, we notice that:
\begin{equation}
    \nabla_i W_n \eqdef \frac{1}{\sqrt{n}}\Bigl(\nabla_i g_{n,i} + \sum_{\mathclap{j\not \in B_{b_n(i)}^n}} D^{i,j}_n\Bigr),
\end{equation}
and therefore by using \cref{pluie} and \cref{wind} we obtain that:
\begin{equation}\label{cambridge}
 \Delta_{i,n}\bigl(W_n-W^{n,i}\bigr)
 = \frac{1}{n}\EE\bigl[\nabla_i g_{n,i} + \sum_{\mathclap{j \not\in B_{b_n(i)}^n}} D^{i,j}_n \mid F_i\bigr]
 \times \EE\bigl[\nabla_i g_{n,i} + \sum_{\mathclap{j \not\in B_{b_n(i)}^n}} D^{i,j}_n \mid X\bigr].
\end{equation}

We first start by bounding the term $(d)$ by using \cref{cambridge}:
\begin{equation}\begin{split}
&
\Big\|\sum_{i\le n} \Delta_{i,n}[W_n-W^i_n]
-\frac{1}{n}\EE\Bigl[ \Cov \big(g_{n,i}, \, \EE\big(g_{n,i} \mid F_i\big)
\mid X_{ \dbracket{n} \setminus \{i\}} \bigl)\Bigr]\Big\|_{L_1}
\\
&\le\frac{1}{n}\Big\|\sum_{i\le n}\EE\bigl[\nabla_i g_{n,i} \mid F_i\bigr]
\times \EE\bigl[\nabla_i g_{n,i} \mid X \bigr]
\\&
\qquad\quad-\EE \Big[\Cov \big(g_{n,i}, \, \EE\big(g_{n,i} \mid F_i\big)
    \Bigm| X_{ \dbracket{n} \setminus \{i\}}\big)\Big]\Big\|_{L_1}
\\
&\quad+2 \max_i\norm[\Big]{\nabla_i g_{n,i}}_{L_2}\Big\|\smashoperator[r]{\sum_{j\not\in B_{b_n(i)}^n}} D^{i,j}_n \Big\|_{L_2}
+\max_i\Big\|\smashoperator[r]{\sum_{j\not\in B_{b_n(i)}^n}} D^{i,j}_n \Big\|^2_{L_2}
\\&\le (d_1)+(d_2)+(d_3).
\end{split}\end{equation}

We bound each term successively. 
We have established that:
\begin{align}
\max_i\norm[\Big]{\smashoperator[r]{\sum_{j\not\in B_{b_n(i)}^n}} D^{i,j}_n}^2_{L_2} &\le \epsilon_n(\Delta)^2+\epsilon_n(\nabla)^2, \\
\max_i\Big\| \nabla_i g_{n,i} \Big\|_{L_2} &\le \calS+ \calS(R).
\end{align}
Therefore we obtain that:
\begin{equation}
(d_2)+(d_3)\le \bigl(\epsilon_n(\Delta)+\epsilon_n(\nabla)\bigr) \times \bigl(\epsilon_n(\Delta)+\epsilon_n(\nabla)+2\calS+ 2\calS(R)\bigr).
\end{equation}
\begin{proofaside}
In the case of train-test split risk this becomes instead:
\begin{equation}
(d_2)+(d_3) \le \Bigl(\epsilon_n(\Delta)+\frac{\epsilon_n(\nabla)}{\sqrt{K_n}}\Bigr)\Bigl(\epsilon_n(\nabla)+\frac{\epsilon_n(\nabla)+2\calS(R)}{\sqrt{K_n}}\Bigr)+o(\frac{1}{n}).
\end{equation}
\end{proofaside}
Remains to control the term $(d_1)$. To this goal, we remark that:
\begin{meqn}
&\EE\Bigl[\EE\bigl[\nabla_i(g_{n,i}) \mid F_i\bigr] \times \EE\bigl[\nabla_i(g_{n,i}) \mid X\bigr]\Bigr]
\\&\oversetclap{(a)}{=}\EE\Big[\EE\Big(\EE\big[\nabla_i g_{n,i} \mid F_i\big]\times \EE\big[\nabla_i g_{n,i} \mid X\big]\Bigm| X_{ \dbracket{n} \setminus \{i\}}\Big)\Big]
\\&\oversetclap{(b)}{=}\EE\Bigl[\EE\Bigl(\EE\big[g_{n,i} \mid F_i\big]\times g_{n,i} \bigm| X_{ \dbracket{n} \setminus \{i\}}\Bigr)\Bigr]
\\&\qquad - \EE\Bigl[\EE\Bigl(\EE\big[g_{n,i} \mid X_{\dbracket{n} \setminus \{i\}}\big]\times \EE\bigl[g_{n,i} \mid F_{i-1}\bigr] \bigm| X_{ \dbracket{n} \setminus \{i\}}\Bigr)\Bigr]
\\&=\EE\Bigl[\Cov\Bigl(g_{n,i}, \, \EE\bigl(g_{n,i} \mid F_i\bigr) \bigm| X_{ \dbracket{n} \setminus \{i\}}\Bigr)\Bigr],
\end{meqn}
where (a) is a result from the tower property and (b) is a result of the bilinearity of the covariance. 
Therefore if we write
\begin{equation}
    C_{n,i}(X) \eqdef \EE\bigl[ \nabla_i g_{n,i}(X) \mid F_i\bigr]\times \EE\bigl[\nabla_i g_{n,i}(X) \mid X\bigr]
\end{equation}
we obtain that
$(d_1)\le \sqrt{\Var\big(\frac{1}{n}\sum_{i\le n}C_{n,i}(X)\big)}$.
By the Efron-Stein and the triangular inequality we have:
\begin{meqn}
(d_1)^2 &\le\frac{1}{n^2}\Var\Big(\sum_{i\le n}C_{n,i}(X)\Big)
\\&\le\frac{1}{n^2}\sum_{j\le n}\EE\Bigl[\Bigl(\sum_{i\le n}C_{n,i}(X)-C_{n,i}(X^j)\Bigr)^2\Bigr]
\\&\le\frac{1}{n^2}\sum_{j\le n}\sum_{i,k\le n}\EE\Bigl[\Bigl(\sum_{i\le n} C_{n,i}(X) - C_{n,i}(X^j) \Bigr)\Bigl(\sum_{k\le n}C_{n,k}(X)-C_{n,k}(X^j)\Bigr)\Bigr]
\\&\le\frac{1}{n^2}\sum_{j\le n}\Bigl(\sum_{i\le n} \norm[\big]{C_{n,i}(X)-C_{n,i}(X^j)}_{L_2}\Bigr)^2
\\&\le\bigl(\calS+\calS(R)\bigr)^2\times \bigl(\epsilon_n(\Delta)+\epsilon_n(R)\bigr)^2 + \frac{1}{n}\bigl(\calS+\calS(R)\bigr)^4.
\label{grace}
\end{meqn}
\begin{proofaside}
In the case of train-test split risk this is slightly more delicate.
To start we can remark that:
\begin{meqn}
    (d_1) &\le \frac{K_n}{n}\sqrt{\Var\Bigl[\smash{\sum_{i\in B^n_1} C_{n,i}} \Bigr]}
        + \frac{K_n}{n}\sqrt{\Var\Bigl[\smash{\sum_{i\not\in B^n_1} C_{n,i}}\Bigr]}
        \phantom{\sum_{i \in B_1^n}} \\ 
    &\le\frac{K_n}{n}\sqrt{\EE\Big[\Var\Bigl[\smash{\sum_{i\in B^n_1} C_{n,i} \mid \hat{f}_1}\Bigr]\Big]}
        +\frac{K_n}{n}\sqrt{\Var\Bigl[\smash{\sum_{i\in B^n_1} \EE\bigl[C_{n,i} \mid \hat{f}_1\bigr]}\Bigr]}
        \phantom{\sum_{i \in B_1^n}} \\
    &\qquad+\frac{K_n}{n}\sqrt{\Var \Bigl[\smash{\sum_{i\not\in B^n_1} C_{n,i}}\Bigr]}.
\end{meqn}
We bound each term separately.
By conditional independence we have that:
\begin{equation}
\frac{K_n}{n}\sqrt{\EE\Big[\Var\Bigl[\smash{\sum_{i\in B^n_1} C_{n,i}} \mid \hat{f}_1\Bigr]\Bigr]}
\le \frac{K_n}{n}\sqrt{|B_n^1|}\calS^2
\le \Bigl(\sqrt{\frac{K_n}{n}}+\frac{K_n}{n}\Bigr)\calS^2.
\end{equation}
Moreover using the Efron-Stein inequality, similarly as in \cref{grace} we obtain that:
\begin{meqn}
\frac{K_n}{n}\sqrt{\Var\Bigl[\smash{\sum_{i\in B^n_1} \EE\bigl[C_{n,i} \mid \hat{f}_1\bigr]}\Big]}
&\le\frac{K_n}{n}\sqrt{|B_1^n|\Var \Bigl[ R_n(\hat{f}_1)\Bigr]}
\\
&\le\frac{K_n}{n}\sqrt{|B_1^n|}\calS\times\calS(R)
\\
&\le \Big[\sqrt{\frac{K_n}{n}}+\frac{K_n}{n}\Bigl]\calS\times\calS(R).
\end{meqn}
Furthermore we obtain that:
\begin{meqn}
\frac{K_n}{n}\sqrt{\Var\Bigl[\smash{\sum_{i\not\in B^n_1} C_{n,i}} \Bigr]}
&\le \frac{K_n}{n}\sqrt{\Var\Bigl[\smash{\sum_{i\not\in B^n_1} C_{n,i}}\Bigr]} \phantom{\sum_{i \in B_1^n}}
\\&\le \frac{1}{K_n}\epsilon_n(R)\calS(R)
\end{meqn}
Therefore we obtain:
\begin{meqn}
(d_1)\le& \Big[\sqrt{\frac{K_n}{n}}+\frac{K_n}{n}\Bigl]\calS\big[\calS(R)+\calS\big]+ \frac{4}{K_n}\epsilon_n(R)\calS(R).
\end{meqn}
\end{proofaside}
Putting everything together, we get:
\begin{meqn}
(d)&\le \Big[\calS + \calS(R)\Big]\times \Big[\epsilon_n(\Delta) + \epsilon_n(R)\Big] +\frac{1}{\sqrt{n}}\Big[\calS+ \calS(R)\Big]^2 \\
&\quad+ \Big[\epsilon_n(\Delta)+\epsilon_n(\nabla)\Big]\times \Big[\epsilon_n(\Delta)+\epsilon_n(\nabla)+\calS+ \calS(R)\Big].
\end{meqn}
\begin{proofaside}
In the case of split risk we have instead:
\begin{meqn}
(d)&\le \Big[\sqrt{\frac{K_n}{n}}+\frac{K_n}{n}\Bigl]\calS\times\calS(R)+ \frac{1}{K_n^{}}\epsilon_n(R)\calS(R)\\
&\qquad+\Bigl[\epsilon_n(\Delta)+\frac{\epsilon_n(\nabla)}{\sqrt{K_n}}]\Big[\epsilon_n(\nabla)+\frac{\epsilon_n(\nabla)+2\calS(R)}{\sqrt{K_n}}\Big]+o(\frac{1}{n}).
\end{meqn}
\end{proofaside}
We will now work on bounding (e).
First of all we can remark that:
\begin{meqn}
  &\frac{1}{n}\abs[\Big]{\sum_{i\le n}\EE\bigl[\Cov \big(g_{n,i}, \EE\big(g_{n,i} \mid F_i\big) \Bigm| X_{\dbracket{n} \setminus \{i\}}\big)\bigr]-\sigma^2_{\rm{cv}}}
\\&\leq \frac{1}{n}\Big|\sum_{i\le n}\EE\bigl[\Cov \big(g_{n,i}, \EE\big(g_{n,i} \mid F_i\big) \Bigm| X_{\dbracket{n} \setminus \{i\}}\big)\bigr]
\\&\qquad- \EE\bigl[\Cov \big(\EE\big(g_{n,i} \mid X_i), ~\EE\big(g_{n,i} \mid X_i\big) \Bigm| X_{\dbracket{n} \setminus \{i\}}\big)\bigr] \Big|
\\&\quad +\abs[\Big]{\EE\bigl[\Cov \big(\EE \big(g_{n,i} \mid X_i), ~\EE\big(g_{n,i} \mid X_i\big) \Bigm| X_{\dbracket{n} \setminus \{i\}}\big)\bigr]-\sigma^2_{\rm{cv}}}.
\end{meqn}
Let us write:
\begin{equation}
\label{flower}
  \EE\bigl(g_{n,i} \mid F_i\bigr)-\EE\bigl(g_{n,i} \mid X_i\bigr)
    \le \sum_{l \leq n}\EE \big(g_{n,i} \mid X_{ \dbracket{l} \cup \{i\}}\big)-\EE \big(g_{n,i} \mid X_{\dbracket{l-1} \cup \{i\}}\big),
\end{equation}
then by a telescopic sum argument we get that:
\begin{meqn}
&\frac{1}{n}\abs[\Big]{\sum_{i\le n}\EE \bigl[\Cov\bigl(g_{n,i}, \, \EE(g_{n,i} \mid F_i) \bigm| X_{\dbracket{n} \setminus \{i\}}\bigr) - \Cov\bigl(g_{n,i}, \, \EE(g_{n,i} \mid X_i) \bigm| X_{ \dbracket{n} \setminus \{i\}}\bigr)\bigr]}
\\&\oversetclap{(a)}{\le }\max_{i\le n} \sum_{l\ne i}\abs[\Big]{\EE\bigl[\Cov\bigl(g_{n,i}, \, \EE(g_{n,i} \mid X_{\dbracket{l} \cup \{i\}} ) \bigm| X_{\dbracket{n} \setminus \{i\}}\big)
    \\ &\hphantom{\max_{i \leq n} \sum_{l \neq i}}\qquad -\Cov\bigl(g_{n,i}, ~\EE(g_{n,i} \mid X_{\dbracket{l - 1} \cup\{ i\}}) \bigm| X_{ \dbracket{n} \setminus \{i\}}\bigr)\bigr]}
\\&\oversetclap{(b)}{\le}\max_{i\le n}\sum_{l\ne i} \abs[\Big]{\EE\bigl[\Cov\bigl(g_{n,i}, \, \EE(\nabla_l g_{n,i} \mid X_{\dbracket{l} \cup \{ i\}}) \bigm| X_{\dbracket{n} \setminus \{i\}}\bigr)\bigr]}
\\&\oversetclap{(c)}{\le}\max_{i\le n}\sum_{l\ne i} \abs[\Big]{\EE\bigl[\Cov\bigl(\nabla_l g_{n,i}, \, \EE(\nabla_l g_{n,i} \mid X_{\dbracket{l} \cup \{ i\}}) \bigm| X_{\dbracket{n} \setminus \{i\}}\bigr)\bigr]}
\\&\oversetclap{(d)}{\le}\Bigl(\epsilon_n(\Delta)+\epsilon_n(R)\Bigr)^2,
\label{hiboux}
\end{meqn}
where to get (a) we used \cref{flower} and the inequality (b) comes from the bilinearity properties of the covariance. 
To get (c) we exploited the following equality:
\begin{equation}
\EE\Bigl[\Cov \bigl(g_{n,i}(X^{n,l}), \, \EE\big(\nabla_l g_{n,i} \mid X_{\dbracket{l} \cup \{ i\}}\big) \bigm| X_{\dbracket{n} \setminus \{i\}}\bigr)\Bigr]
= 0,
\end{equation}
and to get (d) we used the following upper bound:
\begin{equation}
\norm[\big]{\nabla_l(g_{n,i})-\nabla_l(g_{n,i}(X^i))}_{L_2}
\le \frac{\epsilon_n(\Delta)+\epsilon_n(R)}{\sqrt{n}}.
\end{equation}
\begin{proofaside}
In the case of the split risk by following the same line of reasoning we have we have:
\begin{meqn}
&\frac{K_n}{n}\abs[\Big]{\sum_{i\le n}\EE \bigl[\Cov\bigl(g_{n,i}, \, \EE(g_{n,i} \mid F_i) \bigm| X_{\dbracket{n} \setminus \{i\}}\bigr) - \Cov\bigl(g_{n,i}, \, \EE(g_{n,i} \mid X_i) \bigm| X_{ \dbracket{n} \setminus \{i\}}\bigr)\bigr]}
\\&\le \epsilon_n^2(\Delta)+\frac{\epsilon_n(R)^2}{K_n} + o(\frac{1}{n}).
\end{meqn}
\end{proofaside}
Moreover remark that:
\begin{meqn}
&\frac{1}{n}\sum_{i\le n}\EE\Bigl[\Cov\bigl(g_{n,i}, ~\EE\big(g_{n,i} \mid X_i\big) \bigm| X_{\dbracket{n} \setminus \{i\}}\bigr)\Bigr]
\\&=\frac{1}{n}\sum_{i\le n}\EE\Bigl[\Bigl(g_{n,i} - \EE(g_{n,i}(X^i) \mid X)\Bigr) \times \Bigl(\EE\big(g_{n,i} \mid X_i\big)-\EE\big(g_{n,i}\big)\Bigr)\Bigr]
\\&\oversetclap{(a)}{=}\frac{1}{n}\sum_{i\le n}\EE \Bigl[ \EE\Bigl(g_{n,i}-\EE(g_{n,i}(X^i) \mid X) \mid X_i\Bigr)\times\Bigl(\EE\big(g_{n,i} \mid X_i\big)-\EE\big(g_{n,i}\big)\Bigr)\Bigr]
\\&=\frac{1}{n}\sum_{i\le n}\EE\Bigl[\Cov \big(\EE\big(g_{n,i} \mid X_i\big), \, \EE\big(g_{n,i} \mid X_i\big) \mid X_{ \dbracket{n} \setminus \{i\}}\big)\Bigr]
\\&=\frac{1}{n}\sum_{i\le n}\Var\Bigr(\EE\bigr(g_{n,i} \mid X_i\bigl)\Bigl),
\end{meqn}
where to get (a) we used the tower property of the conditional expectation.
Finally, as we have that:
\begin{equation}
\EE\big(g_{n,i} \mid X_i\big)=\bar\calL_{n - \abs{B_{b_n(i)}^n},n}(X_i)+\sum_{j = 1}^{K_n} \abs{B^n_j} \EE\bigl[R_n(\hat{f}_j) \mid X_i\bigr]
\end{equation}
we can see that:
\begin{meqn}
&
\frac{1}{n}\sum_{i\le n}\Var \Bigl(\EE\bigl(g_{n,i} \mid X_i\bigr)\Bigr)
\\&=\frac{1}{n}\sum_{i\le n}\Var\Bigl(\bar\calL_{ \abs{B_{b_n(i)}^n} ,n}(X_i)\Bigr)
+2\Cov\Bigl[\bar\calL_{n - \abs{B_{b_n(i)}^n},n}(X_i),\sum_{j \leq K_n} |B_j^n|\EE(R_n(\hat{f}_j) \mid X_i)\Bigr]
\\&\qquad +\Var\Bigl[\sum_j |B_j^n|\EE\bigl(R_n(\hat{f}_j \bigr) \mid X_i)\Bigr]
\\&= \frac{1}{n}\sum_{i\le n}\sigma_{n-|B^n_{b_n(i)}|,n}^2
+2\sum_{j\ne b_n(i)}|B_j^n|\Cov\Bigl[\bar\calL_{n - \abs{B_{b_n(i)}^n},n}(X_1),\EE(\beta_1^{n-|B_j^n|,n} \mid X_1)\Bigr]
\\&\qquad +\Var\Bigl[\sum_{j\ne b_n(i)} |B_j^n|~\EE(\beta_1^{n-|B_j^n|,n} \mid X_1)\Bigr].
\end{meqn}
Note that if $K_n$ divides $n$ then all the blocks are of same size and  we have
\begin{equation}
\sum_{j\ne b_n(i)} |B_j^n|~\beta_1^{n-\abs{B_j^n},n} =(n-\frac{n}{K_n})\beta_1^{n{(K_n-1)}/{K_n},n}.
\end{equation}
This is not  true in general but something very similar holds.
Indeed one can remark that the following holds for all $i\in \mathbb{N}$ and all integers $k\neq b_n(i)$:
\begin{meqn}
&
\Big|\Var\Bigl[\sum_{j\ne b_n(i)} |B_j^n|~\EE(\beta_1^{n-|B_j^n|,n} \mid X_1)\Bigr]
    - \bigl(n-|B^n_{b_n(i)}|\bigr)^2\Var\Bigl[\EE\bigl(\beta_1^{n-|B_k^n|,n} \mid X_1)\Bigr]\Big|
\\&\le\hspace{-2mm} \sum_{j,l\ne b_n(i)}|B_j^n||B_k^n| \norm[\Big]{\EE\bigl(\beta_1^{n-|B_j^n|,n} \mid X_1)\EE\bigl(\beta_1^{n-|B_l^n|,n} \mid X_1)  -\EE\bigl(\beta_1^{n-|B_k^n|,n} \mid X_1\bigr)^2}_{L_1}
\\&\le 2 \calS(R)\epsilon_n(d).
\end{meqn}
By following a similar reasoning to \cref{hiboux} we can show that, for any $k \leq K_n$:
\begin{equation}
\abs[\Big]{\bigl(n-\abs{B^n_{b_n(i)}}\bigr)^2\Var\Bigl[\EE\bigl(\beta_1^{n-|B_k^n|,n} \mid X_1)\Bigr]} - \bigl(n-|B^n_{b_n(i)}|\bigr)\Var\Bigl[R_n(\hat{f}_k) \Bigr]
\Big|
\le \epsilon_n(R)^2.
\end{equation}
In conclusion, we have:
\begin{meqn}
&
\abs[\Big]{\frac{1}{n}\sum_{i\le n}\Var \Bigl(\EE \bigl(g_{n,i} \mid X_i\bigr)\Bigr)-\sigma^2_{\rm{cv}}}
\\&\le \max_{l_n\in N_n} \abs[\Big]{\sigma_{l_n,n}^2 - \sigma_{1}^2}+2\calS(R)\epsilon_n(d)+ \epsilon_n(R)^2
\\&\quad + 2  \max_{l_1,l_2\in N_n} \abs[\Big]{l_2\times\Cov\Bigl(
  \bar\calL_{l_1,n}(X_1),
  \, \EE\big(\beta_1^{l_2,n} \mid X_1\big)\Bigr) - \rho}
\\&\quad +\max_{l \in N_n} \Big|l \times\Var \big(R_n(f_{l_n, n}(X_{1:l_n}))\big)-\sigma_2^2\Big|.
\end{meqn}

\begin{proofaside}
In the case of the split risk we have instead:
\begin{meqn}
&\abs[\Big]{\frac{1}{n}\sum_{i\le n}\Var \Bigl(\EE \bigl[g_{n,i} \mid X_i\bigr]\Bigr)-\sigma^2_{\rm{split}}}
\\&\quad\le \max_{l_n\in N_n} \abs[\Big]{\sigma_{l_n,n}^2 - \sigma_{1}^2}+\frac{ \epsilon_n(R)^2}{K_n}
+\max_{l \in N_n} \Big|l\times\Var \big(R_n(f_{l_n, n}(X_{1:l_n}))\big)-\sigma_2^2\Big|+o(\frac{1}{n}).
\end{meqn}
\end{proofaside}
Combining all of this together we obtain that:
\begin{meqn}
&\Big|\EE\Big(H(W)W-\sigma^2_{\mathrm{cv}}H'(W)\Big)\Big|
\\&\le \epsilon_n(\Delta)^3 + 2\Bigl(\epsilon_n(\nabla) + \epsilon_n(\Delta)\Bigr)\times \Bigl(\epsilon_n(\nabla)+ \epsilon_n(\Delta)+\calS+\calS(R)\Bigr)
\\&\quad + \frac{1}{\sqrt{n}}\Big\{  \epsilon_n(\Delta)^2\epsilon_n(\nabla)+ \bigl(\calS(R)+\calS\bigr)\times \bigl(\epsilon_n(\nabla)+ \epsilon_n(\Delta)+\calS+\calS(R)\bigr)^2\Big\} 
\\&\quad + \bigl(\calS+ \calS(R)\bigr) \times \bigl(\epsilon_n(\Delta)+ \epsilon_n(R)\bigr)
+\frac{1}{\sqrt{n}} \bigl( \calS + \calS(R)\bigr)^2
\\&\quad + \bigl(\epsilon_n(\Delta)+\epsilon_n(R)\bigr)^2+ 2\calS(R)\epsilon_n(d)+\epsilon_n(R)^2+ 4\epsilon_n(\sigma)
\\&\rightarrow 0.
\end{meqn}
In particular, there exists $C \in \RR$ such that:
\begin{meqn}
&\Big|\EE\Big(WH(W)-\sigma^2_{\rm{cv}}H'(W)\Big)\Big|
\\&\le C\Bigl\{\bigl(\epsilon_n(\nabla)+ \epsilon_n(\Delta)+{\epsilon_n(\hat{R}})\bigr) \times \bigl(\epsilon_n(\nabla)+ \epsilon_n(\Delta)+\epsilon_{\frac{n}{2}}(\hat{R})+\calS+\calS(R)\bigr)
\\&\qquad +\frac{1}{\sqrt{n}} \bigl(\calS(R)+\calS\bigr)^3
+\calS(R)\epsilon_n(d)+ \epsilon_n(\sigma)\Bigr\}
\end{meqn}

\begin{proofaside}
In the case of split risk we have:
\begin{meqn}&
\Big|\EE\Big(H(W)W-\sigma^2_{\rm{split}}H'(W)\Big)\Big|
\\&\le\frac{K_n^{\frac{1}{2}}}{n^{\frac{1}{2}}}\Bigl[\calS^3+2\calS\times\calS(R) + 2 \frac{\calS(R)}{\sqrt{n}}\Bigr]\Bigl[\epsilon_n(\Delta)+\frac{\epsilon_n(\nabla)+2\calS(R)}{\sqrt{K_n}}\Bigr]
\\&\qquad \qquad 
+ \frac{1}{K_n^{}}\epsilon_n(R)\calS(R)
+2 \epsilon_n(\sigma)+\frac{ \epsilon_n(R)^2}{K_n}+o(\frac{1}{n}).
\end{meqn}
Therefore there exists a real $C \in \mathbb{R}$ such that:
\begin{meqn}
&\abs[\Big]{\EE\bigl[W H(W)-\sigma^2_{\mathrm{split}}H'(W)\bigr]}
\le C\Bigl\{\sqrt{\frac{K_n}{n}}\calS\bigl(\calS+\calS(R)\bigr)^2
\\ &\quad+ \Bigl(\epsilon_n(\Delta)+ \frac{1}{\sqrt{K_n}}\bigl(\epsilon_n(\nabla)+\epsilon_n(R)\bigr)\Bigr)\frac{\calS(R)}{\sqrt{K_n}}+\epsilon_n(\Delta)^2
+ \epsilon_n(\sigma)\Bigr\}.
\end{meqn}
\end{proofaside}
\end{proof}

\subsection{Proof of proposition~\ref{nice}}
\label{sec:proof_estimator_sigma1}
\begin{proof}
The proof consists in first upper bounding the bias of $\Sigma_{\rm{cross}}^2$ and then studying its variance.
For simplicity of notation we omit the superscript $n$: i.e we write $X$ for $X^{n}$ and $X^i$ for $X^{n,i}$.

We have that, for all $j\le K_n$ and $l\in B^n_j$:
\begin{meqn}
&\EE\Bigl[\Bigl(\calL_n(X_l,\hat{f}_j) - \frac{1}{\abs{B_j^n}}\smashoperator[r]{\sum_{k \in B_j^n}} \calL_n(X_k, \hat{f}_j)\Bigr)^2\Bigr]
\\&=\EE\Bigl[\Bigl(\calL_n(X_l, \hat{f}_j) - R_n(\hat{f}_j)\Bigr)^2\Bigr]
+ \frac{1}{\abs{B_j^n}^2}\EE\Bigl[\Bigl(\smashoperator[r]{\sum_{k \in B_j^n}}\calL_n(X_l, \hat{f}_j) - R_n(\hat{f}_j)\Bigr)^2\Bigr]
\\&\quad -\frac{2}{\abs{B^n_j}} \EE\Bigl[\Bigl(\calL_n(X_l,\hat{f}_j) - R_n\big(\hat{f}_j\big)\Bigr)\Bigl(\smashoperator[r]{\sum_{k\in B_j^n}}\calL_n(X_k,\hat{f}_j) -R_n (\hat{f}_j)\Bigr) \Bigr].
\end{meqn}
We study each term separately. First, we have by definition that:
\begin{equation*}
\EE\Bigl[\Bigl(\calL_n(X_l,\hat{f}_j) - R_n(\hat{f}_j)\Bigr)^2\Bigr] = \sigma_{n - \abs{B^n_j},n}^2.
\end{equation*}
Therefore we only need to study the other two terms.
We may expand the terms to obtain that:
\begin{meqn}
&\frac{1}{\abs{B_j^n}^2} \EE\Bigl[\Bigl(\smashoperator[r]{\sum_{i\in B_j^n}} \calL_n(X_i,\hat{f}_j) - R_n (\hat{f}_j)\Bigr)^2\Bigr]
\\&=\frac{1}{\abs{B_j^n}^2} \smashoperator[r]{\sum_{i\in B_j^n}} \EE\Bigl[\Bigl(\calL_n(X_i, \hat{f}_j) - R_n(\hat{f}_j)\Bigr)^2\Bigr]
\\&\quad+ \frac{1}{\abs{B_j^n}^2}\sum_{k, l \in B_j^n, k \neq l}
    \EE\Bigl[\Bigl(\calL_n(X_l,\hat{f}_j)-R_n(\hat{f}_j)\Bigr)
        \Bigl(\calL_n(X_k,\hat{f}_j) - R_n(\hat{f}_j)\Bigr)\Bigr].
\end{meqn}
Note that when conditioning on $\hat{f}_j$, the loss on $X_k$ and the loss on $X_l$ are independent.
Therefore for all  distinct $k\ne l$ in $B_j^n$,  we have that:
\begin{equation}
    \EE\Bigl[\Bigl(\calL_n(X_l,\hat{f}_j)-R_n(\hat{f}_j)\Bigr)
        \Bigl(\calL_n(X_k,\hat{f}_j) - R_n(\hat{f}_j)\Bigr)\Bigr] = 0.
\end{equation}
This implies that:
\begin{equation}
 \frac{1}{\abs{B_j^n}^2} \EE\Bigl[\Bigl(\sum_{i\in B_j^n}\calL_n(X_i, \hat{f}_j) - R_n (\hat{f}_j)\Bigr)^2\Bigr]
=\frac{\sigma_{n - \abs{B_j^n},n}^2}{\abs{B_j^n}}.
\end{equation}
Similarly one can show that:
\begin{equation}
    \frac{1}{\abs{B^n_j}} \EE\Bigl[\Bigl(\calL_n(X_l,\hat{f}_j) - R_n\big(\hat{f}_j\big)\Bigr)\Bigl(\sum_{k\in B_j^n}\calL_n(X_k,\hat{f}_j) -R_n (\hat{f}_j)\Bigr) \Bigr] = 
    \frac{\sigma_{n - |B_j^n|,n}^2}{|B_j^n|}.
\end{equation}
Therefore we have established that:
\begin{equation}\begin{split}&
 \EE\Bigl[\Bigl(\calL_n(X_l,\hat{f}_j) - \frac{1}{\abs{B_j^n}}\sum_{l\in B_j^n}\calL_n(X_l,\hat{f}_j)\Bigr)^2\Bigr]
 = \frac{\abs{B_j^n}-1}{\abs{B_j^n}} \sigma_{n - \abs{B^n_j},n}^2,
\end{split}\end{equation}
from which we may bound the bias of our estimator:
\begin{equation}
    \abs[\Big]{ \EE\bigl[\Sigma_{\rm{cross}}^2\bigr] - \sigma_1^2 } \leq \epsilon_n(\sigma).
\end{equation}

We now look to control the variance of our estimator.
Let us write:
\begin{equation}
    \hat{L}_l(X) \eqdef \calL_n\bigl(X_l,\hat{f}_{b_n(l)}(X)\bigr) - \frac{1}{|B_{b_n(l)}^n|}\smashoperator[r]{\sum_{k\in B_{b_n(l)}^n}} \calL_n\bigl(X_k,\hat{f}_j(X)\bigr).
\end{equation}
Using Efron-Stein inequality we obtain that:
\begin{meqn}
\Var \bigl[\Sigma_{\rm{cross}}^2\bigr]
&\le\sum_{i\le n}\EE\Bigl[\Sigma_{\rm{cross}}^2(X)-\Sigma_{\rm{cross}}^2(X^i)\Bigr]
\\&\le\sum_{i\le n} \EE\Bigl[\Bigl(\frac{1}{K_n}\smashoperator[r]{\sum_{j\le K_n}} \Sigma^2_j(X)-\Sigma^2_j(X^i)\Bigr)^2\Bigr]
\\&\le \frac{1}{K_n^2}\sum_{i\le n}\sum_{j,k \le K_n}
    \frac{1}{\bigl(\abs{B^n_j}-1\bigr)\bigl(\abs{B^n_k}-1\bigr)}
        \sum_{l \in B^n_j} \smashoperator[r]{\sum_{m \in B^n_k}} \EE\Bigl[\nabla_i\hat{L}_l^2 \: \nabla_i \hat{L}_m^2\Bigr]
\\&\oversetclap{(a)}{\le} \frac{1}{K_n^2}\sum_{i\le n}\Bigl(\sum_{j\le K_n}\frac{1}{\abs{B^n_j}-1}
        \sum_{l\in B^n_j}\norm[\big]{\nabla_i\hat{L}_l^2}_{L_2}\Bigr)^2
\\&\oversetclap{(b)}{\le} \frac{4}{K_n^2} \sum_{i\le n} \norm[\big]{\hat{L}_l}^2_{L_4}\Bigl(\sum_{j\le K_n}\frac{1}{\abs{B^n_j} -1} 
    \sum_{l\in B^n_j}\norm[\big]{\nabla_i\hat{L}_l}_{L_4}\Bigr)^2
 \\&\oversetclap{(c)}{\le} \frac{16\calS^2}{K_n^2} \sum_{i\le n}\Bigl(\sum_{j\le K_n}\frac{1}{\abs{B^n_j}-1}\sum_{l\in B^n_j} \norm[\big]{\nabla_i\hat{L}_l}_{L_4}\Bigr)^2,
\end{meqn}
where (a)  and (b) are consequences of Cauchy-Schwarz inequality, and (c) is immediate from the fact that
$\norm{\hat{L}_l}_{L_4}\le 2 \calS$.
Finally note that the following holds for all $i$ and all $l$ in the same fold:
\begin{equation}\begin{split}
\norm[\big]{\nabla_i\hat{L}_l}_{L_4} \le \mathbb{I}(l=i)\frac{|B^n_{b_n(i)}|-1}{|B^n_{b_n(i)}|}\calS+ \frac{\mathbb{I}(l\ne i)}{|B^n_{b_n(i)}|}\calS.
\end{split}\end{equation}
Instead, if $i$ and $l$ belong to different fold, i.e $b_n(i)\ne b_n(l)$, we have:
\begin{equation}\begin{split}
\big\|\nabla_i\hat{L}_l \big\|_{L_4}\le2\frac{|B^n_{b_n(l)}|-1}{|B^n_{b_n(l)}|}\frac{\epsilon_n(\Delta)}{n}.
\end{split}\end{equation}
Therefore combined with the fact that $\Big|\frac{1}{|B_{b_n(i)}|}-\frac{K_n}{n}\Big|=O(\frac{K_n^2}{n^2})$ we have that:
\begin{equation}\begin{split}
\Var\bigl[\Sigma_{\rm{cross}}^2\bigr]
 {\le}\frac{16\calS^2}{n}\Bigl(2\calS + 2\epsilon_n(\Delta)\Bigr)^2+o(\frac{1}{n}).
\end{split}\end{equation}
This implies that:
\begin{equation}\begin{split}
\norm[\Big]{\Sigma_{\rm{cross}}^2-\sigma_1^2}_{L_2}
 {\le}\frac{8\calS}{\sqrt{n}}\bigl(\calS + \epsilon_n(\Delta)\bigr) + \epsilon_n(\sigma)+o(\frac{1}{\sqrt{n}}).
\end{split}\end{equation}
\end{proof}

\subsection{Proof of proposition~\ref{cannes}}
\begin{proof} 
The proof proceeds in two stages.
In a first stage, we bound the difference between $\EE(\hat{S}^2_{\text{cv}})$ and $\sigma_{\rm{cv}}^2$.
In a second stage, we upper-bound the variance of $\hat{S}^2_{\text{cv}}$ to ensure consistency.
Note that we can assume without loss of generality that $n$ is even.
For ease of notation we drop the superindex $n$ from the processes: for example we write $X$ instead of $X^n$ and $X^i$ instead of $X^{n,i}$.
Note that as $(X_l)$ is an process with independent entries, we can assume without loss of generality that $X^i=\tilde{X}^i$ for all $i\le n$.
We introduce the following notations:
\begin{align*}
K_n^{i}(X) &\eqdef \calL_n(X_i,\hat{f}_{b_{\frac{n}{2}}(i)}(X))-R(\hat{f}_{b_{\frac{n}{2}}(i)}(X)), \\
D_n^{i,j}(X) &\eqdef \nabla_jK_n^i(X), \\
F_i &\eqdef \sigma(X_1,\dots,X_i),
\end{align*}
and by abuse of notation will simply write $K_n^{i}$ and $D_n^{i,j}$ for respectively $K_n^{i}(X)$ and $D_n^{i,j}(X)$.
We remark that by symmetry we have:
\begin{equation}
\EE(\hat{S}_{\text{cv}}^2) =\frac{n^2}{4}\EE\Big[\big(\hat{R}^{n/2}_{\rm{cv}}-\hat{R}^{n/2}_{\rm{cv}}(X^1)\big)^2\Big].
\end{equation}
Therefore we will study $\EE\Bigl[\bigl(\hat{R}^{n/2}_{\rm{cv}}-\hat{R}^{n/2}_{\rm{cv}}(X^1)\bigr)^2\Bigr]$ and relate it to $\sigma^2_{\rm{cv}}.$
By standard computations we have that:
 \begin{meqn}
 &\label{eq:jane-estimate-cv}
\frac{n^2}{4}\EE\Big[\big(\hat{R}^{n/2}_{\rm{cv}}-\hat{R}^{n/2}_{\rm{cv}}(X^1)\big)^2\Big]
\\&=\EE\Bigl[\Bigl(\nabla_1K^1_n + \sum_{l\ne 1} \nabla_1 \calL_n(X_l, \hat{f}_{b_{\frac{n}{2}}(l)})\Bigr)^2\Bigr]
\\&=\EE\Bigl[(\nabla_1 K^1_n)^2\Big]
    + 2\sum_{l\ne 1}\EE\Big[\nabla_1 K^1_n \times \nabla_1 \calL_n(X_l, \hat{f}_{b_{\frac{n}{2}}(l)})\Bigr]
\\& \quad+\smashoperator[r]{\sum_{l,k\ne 1}} \EE\Bigl[\nabla_1\calL_n(X_k,\hat{f}_{b_{\frac{n}{2}}(k)}) \times \nabla_1\calL_n(X_l,\hat{f}_{b_{\frac{n}{2}}(l)})\Bigr].
\end{meqn}
We study each term successively. To start, note that:
 \begin{equation}
 \abs[\Big]{\EE\Big[(\nabla_1 K^1_n)^2\Big] - \sigma^2_1} \le \abs[\Big]{\sigma^2_{|B^n_1|,n}-\sigma^2_1}
 \le \epsilon_{\frac{n}{2}}(\sigma).
\end{equation}
We now focus on studying the two other terms of \cref{eq:jane-estimate-cv}.
For  all $l\ne 1$ we have by the definition of $K^1_n$ that:
\begin{meqn}&
\EE\bigl[\nabla_1 K^1_n\times \nabla_1\calL_n(X_l,\hat{f}_{b_{\frac{n}{2}}(l)})\bigr]
\\&= \EE\bigl[\nabla_1 K^1_n\times \nabla_1K^l_n\bigr] + \EE\bigl[\nabla_1 K^1_n\times \nabla_1R(\hat{f}_{b_{\frac{n}{2}}(l)})\bigr].
\end{meqn}
The first term can be controlled immediately as:
\begin{meqn}
\abs[\Big]{\EE\bigl[\nabla_1 K^1_n\times \nabla_1 K^l_n\bigr]}
&\oversetclap{(a)}{=}\abs[\Big]{\EE\bigl[\nabla_{1,l} K^1_n\times \nabla_1 K^l_n\bigr]}
\\&\le 2 \sqrt{2}\frac{\epsilon_{\frac{n}{2}}(\nabla)\epsilon_{\frac{n}{2}}(\Delta)}{n^{\frac{3}{2}}}.
\end{meqn}
where the equality (a) is due to the fact that:
\begin{equation}
\abs[\Big]{\EE\bigl[\nabla_1 K^1_n(X^l) \times \nabla_1 K^l_n\bigr]}=0.
\end{equation}
For the second term, we note that:
\begin{equation}
\EE\bigl[\nabla_1 K^1_n\times \nabla_1R(\hat{f}_{b_{\frac{n}{2}}(l)})\bigr] = \EE\Bigl[\Cov\big(K_n^1, R_n(\hat{f}_{b_{\frac{n}{2}}(l)}) \mid X^1\big)\Big],
\end{equation}
 and we will thus compare it to $\rho$.
 If $l$ belongs to a different fold than $1$, then by a telescopic sum argument we have:
\begin{meqn}
 &\abs[\Big]{\EE\bigl[\Cov\bigl(K_n^1, R_n(\hat{f}_{b_{\frac{n}{2}}(l)}) \mid X^1\bigr)\bigr] - \frac{\rho}{\frac{n}{2}-|B^{n}_{b_{\frac{n}{2}}(l)}|}}
 \\&\le \abs[\Big]{\EE\bigl[\Cov\bigl(K_n^{1},R_n(\hat{f}_{b_{\frac{n}{2}}(l)}) \mid X^l\bigr)
    - \Cov\bigl(\EE[K_n^{1} \mid X_l], \EE[R_n(\hat{f}_{b_{\frac{n}{2}}(l)}) \mid X_l] \mid X^l\bigr)\bigr]}
    \\&\quad +\frac{\epsilon_{\frac{n}{2}}(\sigma)}{\frac{n}{2}-|B^{n}_{b_{\frac{n}{2}}(l)}|}
 \\&\le \sum_{j\ne l}\abs[\Big]{\EE\Bigl[\Cov\bigl(\EE[\nabla_j K_n^{1} \mid X_{\dbracket{j}\cup\{l\}}], \EE[R_n(\hat{f}_{b_{\frac{n}{2}}(l)}) \mid X_{\dbracket{j}\cup\{l\}}] \mid X^l\bigr)\Bigr]}
    \\&\quad + \sum_{j\ne l}\Big|\EE\Bigl[\Cov\bigl(\EE[K_n^{1} \mid X_{\dbracket{j}\cup\{l\}}], \EE[\nabla_jR_n(\hat{f}_{b_{\frac{n}{2}}(l)}) \mid X_{\dbracket{j}\cup\{l\}}] \mid X^l\bigl)\Bigr]\Big|
        +\frac{\epsilon_{\frac{n}{2}}(\sigma)}{\frac{n}{2}-|B^{n}_{b_{\frac{n}{2}}(l)}|}
 \\&\oversetclap{(a)}{\le} 2\sum_{j\ne l}\Big|\EE\Bigl[\Cov\Bigl(\EE[\nabla_jK_n^1 \mid X_{\dbracket{j}\bigcup\{l\}}], \EE[\nabla_jR_n(\hat{f}_{b_{\frac{n}{2}}(l)}) \mid X_{\dbracket{j}\bigcup\{l\}}] \mid X^l\Bigr)\Bigr]\Big| +\frac{\epsilon_{\frac{n}{2}}(\sigma)}{\frac{n}{2}-|B^{n}_{b_{\frac{n}{2}}(l)}|}
 \\&\le2\sqrt{2}\frac{\epsilon_{\frac{n}{2}}(\Delta)\epsilon_{\frac{n}{2}}(\hat{R})}{{n}}
    + \frac{\epsilon_{\frac{n}{2}}(\sigma)}{\frac{n}{2}-|B^{n}_{b_{\frac{n}{2}}(l)}|},
\label{petra}
\end{meqn}
where to get (a) we used the fact that:
\begin{equation*}
\EE\Bigl[\Cov\bigl(\EE[\nabla_jK_n^{1} \mid X_{\dbracket{j}\cup\{l\}}], \EE[R_n(\hat{f}_{b_{\frac{n}{2}}(l)}(X^j)) \mid X_{\dbracket{j}\cup\{l\}}] \mid X^l \bigr)\Bigr] =0.
\end{equation*}
This implies that:
\begin{meqn}&
\abs[\Big]{\sum_{l\ne 1}\EE\bigl[\nabla_1 K^1_n \times \nabla_1\calL_n(X_l,\hat{f}_{b_{\frac{n}{2}}(l)})\bigr]-\rho}
\\&\quad \le \sqrt{2}\frac{\epsilon_{\frac{n}{2}}(\nabla)\epsilon_{\frac{n}{2}}(\Delta)}{\sqrt{n}}+ \sqrt{2}{\epsilon_{\frac{n}{2}}(\Delta)\epsilon_{\frac{n}{2}}(\hat{R})}+{\epsilon_{\frac{n}{2}}(\sigma)}+O(\frac{1}{n}
).
\end{meqn}
where the $O(\frac{1}{n})$ term is a consequence of the (potentially) uneven size of the folds.
Therefore the only term left to study in \cref{eq:jane-estimate-cv} is:
\begin{equation}\label{jane}
\smashoperator[r]{\sum_{l,k\ne 1}} \EE\Bigl[\nabla_1\calL_n(X_k, \hat{f}_{b_{\frac{n}{2}}(k)})\times \nabla_1 \calL_n(X_l,\hat{f}_{b_{\frac{n}{2}}(l)})\Bigr].
\end{equation}
If $k=l$ then we know that:
\begin{equation}
\EE\Bigl[\bigl(\nabla_1 \calL_n(X_k,\hat{f}_{b_{\frac{n}{2}}(k)})\bigr)^2\Bigr] \le \frac{2\epsilon_{\frac{n}{2}}(\Delta)^2}{n}.
\end{equation}
If $k$ is different from $l$ then we consider two different cases: (i) if they belong to the same fold, and (ii) if they belong to different folds.
If $b_{\frac{n}{2}}(k)=b_{\frac{n}{2}}(l)$, then by conditional independence  we have:
 \begin{meqn}
 &\EE\Bigl[\bigl(\nabla_1\calL_n(X_l,\hat{f}_{b_{\frac{n}{2}}(l)})\bigr)\bigl(\nabla_1\calL_n(X_k,\hat{f}_{b_{\frac{n}{2}}(k)})\bigr)\Bigr]
 \\&\oversetclap{(a)}{=}\EE\Bigl[\nabla_1\EE\Big[\calL_n(X_l,\hat{f}_{b_{\frac{n}{2}}(k)}) \mid \hat{f}_{b_{\frac{n}{2}}(k)}\Big] \times
 \nabla_1 \EE \Big[\calL_n(X_k,\hat{f}_{b_{\frac{n}{2}}(k)}) \mid \hat{f}_{b_{\frac{n}{2}}(k)}\Big]\Bigr]
 \\&\oversetclap{(b)}{=}\EE\Big[\big(\nabla_1R_n(\hat{f}_{b_{\frac{n}{2}}(k)})\bigr)^2\Big]
 \\&=\EE\Bigl[\Var\Big[R_n(\hat{f}_{b_{\frac{n}{2}}(k)}) \mid X^1\Big]\Bigr],
\end{meqn}
where (a) is a consequence of the tower property, (b) comes from the conditional independence of $\calL_n(X_l,\hat{f}_{b_{\frac{n}{2}}(l)})$ and $\calL_n(X_k,\hat{f}_{b_{\frac{n}{2}}(l)})$.
One can note that as $(F_i)$ forms a filtration the following holds:
\begin{meqn}
 &\Var\Big[R_n(\hat{f}_{b_{\frac{n}{2}}(k)})\Big]
 \\&= \smashoperator[r]{\sum_{i\not \in B^{\sfrac{n}{2}}_{b_{\sfrac{n}{2}}(k)} }}
        \EE\Bigl[\Bigl(\EE\big(R_n(\hat{f}_{b_{\frac{n}{2}}(k)}) \mid F_i\big)
        -\EE\big(R_n(\hat{f}_{b_{\frac{n}{2}}(k)}) \mid F_{i-1}\big)\Bigr)^2\Bigr]
  \\&= \smashoperator[r]{\sum_{i\not \in B^{\sfrac{n}{2}}_{b_{\sfrac{n}{2}}(k)} }} \EE\Bigl[\EE\big(\nabla_i R_n(\hat{f}_{b_{\frac{n}{2}}(k)}) \mid F_i\big)^2\Big].
\end{meqn}
Using a telescopic sum argument we deduce that:
\begin{meqn}
&\abs[\Big]{\EE\Bigl[\EE\bigl(\nabla_iR_n(\hat{f}_{b_{\frac{n}{2}}(k)}) \mid F_i\bigr)^2\Bigr]
    -\EE\Bigl[\EE\bigl(\nabla_iR_n(\hat{f}_{b_{\frac{n}{2}}(k)}) \mid X_i\bigr)^2\Bigr]}
\\&\le\sum_{j\le i}\abs[\Big]{\EE\Bigl[\EE\bigl(\nabla_iR_n(\hat{f}_{b_{\frac{n}{2}}(k)}) \mid X_{\dbracket{j}\cup\{i\}}\bigr)^2
    -\EE\bigl(\nabla_iR_n(\hat{f}_{b_{\frac{n}{2}}(k)}) \mid X_{\dbracket{j-1}\cup\{i\}}\bigr)^2\Bigr]}
\\&\le 2\sum_{j<i}\abs[\Big]{\EE\Bigl[\EE\bigl(\nabla_{i,j}R_n(\hat{f}_{b_{\frac{n}{2}}(k)}) \mid X_{\dbracket{j}\cup\{i\}}\bigr)^2\Bigr]}
\\&\le \frac{8}{n^2}\epsilon_{\frac{n}{2}}(\hat{R})^2.
\label{nyc1}
\end{meqn}
This implies that:
\begin{meqn}
 &\Big|\EE\Bigl[\bigl(\nabla_1\calL_n(X_l,\hat{f}_{b_{\frac{n}{2}}(k)})\bigr) \bigl(\nabla_1\calL_n(X_k,\hat{f}_{b_{\frac{n}{2}}(k)})\bigr)\Bigr]
    -\frac{\sigma^2_2}{\bigl(\frac{n}{2}-|B^{\sfrac{n}{2}}_{b_{\frac{n}{2}}(k)}|\bigr)^2}\Big|
 \\&\le \frac{\epsilon_{\frac{n}{2}}(\sigma)}{\bigl(\frac{n}{2}-|B^{\sfrac{n}{2}}_{b_{\frac{n}{2}}(k)}|\bigr)^2}
    + 8\frac{\epsilon_{\frac{n}{2}}(\hat{R})^2}{n^2}.
\label{nyc2}
\end{meqn}

We studied the case where $k$ and $l$ belong to the same fold.
On the other hand, if $k,l\not\in B^n_1$ and belong to different folds, i.e. $b_{\frac{n}{2}}(k) \neq b_{\frac{n}{2}}(l)$ then by expanding we have:
 \begin{meqn}
 &\EE\Bigl[\bigl(\nabla_1\calL_n(X_l,\hat{f}_{b_{\frac{n}{2}}(l)}) \bigr)
 \bigl(\nabla_1\calL_n(X_k,\hat{f}_{b_{\frac{n}{2}}(k)}) \bigr) \Bigr]
\\&=\EE\Bigl[ \bigl(\nabla_1R_n(\hat{f}_{b_{\frac{n}{2}}(k)}) \bigr)
\bigl( \nabla_1R_n(\hat{f}_{b_{\frac{n}{2}}(l)}) \bigr)\Bigr]
+\EE\Bigl[ \bigl(\nabla_1 K_n^{k}\bigr) \bigl(\nabla_1 K_n^{l} \bigr)\Bigr]
\\&\quad +\EE\Bigl[\bigl(\nabla_1 K_n^{k}\bigr) \bigl(\nabla_1R_n(\hat{f}_{b_{\frac{n}{2}}(l)}) \bigr)\Bigr]
+\EE\Bigl[ \bigl(\nabla_1 K_n^{l} \bigr) \bigl( \nabla_1R_n(\hat{f}_{b_{\frac{n}{2}}(k)}) \bigr)\Bigr].
\label{tete}
\end{meqn}
We analyze each term separately.
First, note that we have:
\begin{equation}\begin{split}\label{eq_1}
 \Big|\EE\bigl[\nabla_1 K_n^l \: \nabla_1 R_n(\hat{f}_{b_{\frac{n}{2}}(k)})\bigr]\Big|
 &=\Big|\EE\Bigl[\nabla_1 K_n^{l} \: \nabla_{1,l} R_n(\hat{f}_{b_{\frac{n}{2}}(k)})\Big]\Big|
            \\
            &\le \frac{4}{n^2}\epsilon_{\frac{n}{2}}(\Delta)\epsilon_{\frac{n}{2}}(\hat{R}).
\end{split}\end{equation}
Moreover, by conditional independence, we have:
\begin{equation}\label{eq_2}
    \abs[\Big]{\EE\Bigl[\nabla_1 K_n^{l} \: \nabla_1 K_n^{k}\Bigr]}
            = \abs[\Big]{\EE\Bigl[\nabla_{1,l}K_n^{l} \: \nabla_{1,k}K_n^{k}\Bigr]}
\le \frac{4}{n^2} \epsilon_{\frac{n}{2}}(\nabla)^2.
\end{equation}
Finally we study the last term of \cref{tete}. Towards this goal notice that by using the definition of $\epsilon_{\frac{n}{2}}(\sigma)$ we obtain:
 \begin{meqn}
 &
\Big|\EE\Bigl[\nabla_1 R_n(\hat{f}_{b_{\frac{n}{2}}(k)})\nabla_1R_n(\hat{f}_{b_{\frac{n}{2}}(l)})\Bigr]-\frac{\sigma_2^2}{\big[\frac{n}{2}-|B^{{n}/{2}}_{b_{\frac{n}{2}}(k)}|\big]^2}\Big|
\\&\le \Big|\EE\Big(\Cov\big[R_n(\hat{f}_{b_{\frac{n}{2}}(k)}), R_n(\hat{f}_{b_{\frac{n}{2}}(l)}) \mid X^1\bigr]\Big) - \EE\Bigl(\Var\bigl[R_n(\hat{f}_{b_{\frac{n}{2}}(k)})\mid X^1\bigr]\Bigr)\Big|
\\&\qquad+\frac{\epsilon_{\frac{n}{2}}(\sigma)}{\Bigl(\frac{n}{2}-|B^{{n}/{2}}_{b_{\frac{n}{2}}(k)}|\Bigr)^2}
    +\frac{8}{n^2}\epsilon_{\frac{n}{2}}(\hat{R})^2,
\end{meqn}
where the term $\frac{8}{n^2}\epsilon_{\frac{n}{2}}(\hat{R})^2$ comes from \cref{nyc1} and \cref{nyc2}.
 We would like to compare $\Cov\Big[\nabla_1R_n(\hat{f}_{b_{\frac{n}{2}}(k)}),\nabla_1 R_n(\hat{f}_{b_{\frac{n}{2}}(l)})\Bigr]$ to $\sigma^2_2$. 
Towards this goal, we remark that the following holds:
 \begin{meqn}
&\Big|\EE\Bigl(\EE\big[\nabla_1R_n(\hat{f}_{b_{\frac{n}{2}}(k)}) \mid X^1\big] \EE\big[\nabla_1 R_n(\hat{f}_{b_{\frac{n}{2}}(l)}) \mid X^1\big]
\\&\qquad\quad - \EE\big[\nabla_1R_n(\hat{f}_{b_{\frac{n}{2}}(k)}) \mid X_1\big] \EE\big[\nabla_1R_n(\hat{f}_{b_{\frac{n}{2}}(l)}) \mid X_1\big]\Bigr)\Big|
\\&\leq \sum_{j\ne 1}\Big|\EE\Bigl(\EE\big[\nabla_1R_n(\hat{f}_{b_{\frac{n}{2}}(k)}) \mid X_{\dbracket{j}\cup\{1\}}\big] \EE\big[\nabla_1R_n(\hat{f}_{b_{\frac{n}{2}}(l)}) \mid X_{\dbracket{j}\cup\{1\}}\big]\Bigr)
\\&\qquad\quad  -\EE\Bigl(\EE\big[\nabla_1R_n(\hat{f}_{b_{\frac{n}{2}}(k)}) \mid X_{\dbracket{j-1}\cup\{1\}}\big] \EE\big[\nabla_1R_n(\hat{f}_{b_{\frac{n}{2}}(l)}) \mid X_{\dbracket{j-1}\cup\{1\}}\big]\Bigr)\Big|
\\&\le2 \sum_{j\ne 1}\Big|\EE\Bigl[\EE\big[\nabla_{1,j}R_n(\hat{f}_{b_{\frac{n}{2}}(k)}) \mid X_{\dbracket{j}\cup\{1\}}\big] \EE\big[\nabla_{1,j}R_n(\hat{f}_{b_{\frac{n}{2}}(l)}) \mid X_{\dbracket{j}\cup\{1\}}\big]\Bigr]\Big|
\\&\le \frac{8}{n^2}\epsilon_{n/2}(\hat{R})^2.
\label{vomir_1}
\end{meqn}
By the definition of $\epsilon_{\frac{n}{2}}(d)$ we know that:
\begin{meqn}
\label{vomir_3}
&\Big|\EE\Bigl[\EE\bigl[\nabla_1R_n(\hat{f}_{b_{\frac{n}{2}}(k)}) \mid X_1\bigr] \EE\bigl[\nabla_1R_n(\hat{f}_{b_{\frac{n}{2}}(l)}) \mid X_1\bigr]\Bigr]
    -\Var\bigl[\EE\bigl[R_n(\hat{f}_{b_{\frac{n}{2}}(k)}) \mid X_1 \bigr]\bigr]\Big|
\\&\le 8\frac{\calS(R)\epsilon_{\frac{n}{2}}(d)}{n^2}.
\end{meqn} 
Therefore \cref{vomir_1,vomir_3} imply that:
 \begin{meqn}
 &\label{eq_3}
 \Big|\Cov\Big[\nabla_1R_n(\hat{f}_{b_{\frac{n}{2}}(k)}),\nabla_1 R_n(\hat{f}_{b_{\frac{n}{2}}(l)})\Bigr]-\frac{\Var\Big[R_n(\hat{f}_{b_{\frac{n}{2}}(k)})\Big]}{\frac{n}{2}-|B^{{n}/{2}}_{b_{\frac{n}{2}}(k)}|} \Big|
 \\&\le \frac{16}{n^2}\epsilon_{\frac{n}{2}}(\hat{R})^2 + \frac{4}{n^2}\calS(R)\epsilon_{\frac{n}{2}}(d).
\end{meqn}
Hence using \cref{eq_1}, \cref{eq_2} and \cref{eq_3} we know that 
 \begin{equation}\begin{split}&
\Big|\EE\Bigl[\nabla_1\calL_n(X_l,\hat{f}_{b_{\frac{n}{2}}(l)})\nabla_1\calL_n(X_k,\hat{f}_{b_{\frac{n}{2}}(k)})\Bigr]-\frac{\sigma_2^2}{\Big[\frac{n}{2}-|B^{{n}/{2}}_{b_{\frac{n}{2}}(k)}|\Big]^2}\Big|
\\&\le \frac{8\epsilon_{\frac{n}{2}}(\Delta)\epsilon_{\frac{n}{2}}(\hat{R})+16\epsilon_{\frac{n}{2}}(\hat{R})^2+ 4\epsilon_{\frac{n}{2}}(\nabla)^2+4\calS(R)\epsilon_{\frac{n}{2}}(d)}{n^2}+\frac{\epsilon_{\frac{n}{2}}(\sigma)}{\Big[\frac{n}{2}-|B^{{n}/{2}}_{b_{\frac{n}{2}}(k)}|\Big]^2}.
\end{split}\end{equation}
Putting everything together, we thus deduce that:
\begin{equation}\begin{split}&
\Big|\sum_{l \ne k}\EE\Bigl[\nabla_1\calL_n(X_l,\hat{f}_{b_{\frac{n}{2}}(l)})\nabla_1\calL_n(X_k,\hat{f}_{b_{\frac{n}{2}}(k)})\Bigr]-\sigma_2^2\Big|
\\&\le {2\epsilon_{\frac{n}{2}}(\Delta)\epsilon_{\frac{n}{2}}(\hat{R})+4\epsilon_{\frac{n}{2}}(\hat{R})^2+ \epsilon_{\frac{n}{2}}(\nabla)^2+\calS(R)\epsilon_{\frac{n}{2}}(d)}+2\epsilon_{\frac{n}{2}}(\sigma)+O(\frac{1}{n}),
\end{split}\end{equation}
where the $O(\frac{1}{n})$ term is a consequence of the folds being (potentially) uneven.
We have thus successfully established the following bound on the bias of $\hat{S}_{\text{cv}}$:
\begin{meqn}
\abs[\Big]{\EE\bigl[\hat{S}^2_{\text{cv}}\bigr] - (\sigma^2_1+\sigma_2^2+2\rho)}
&\leq 4\epsilon_{\frac{n}{2}}(\Delta)\epsilon_{\frac{n}{2}}(\hat{R})+4\epsilon_{\frac{n}{2}}(\hat{R})^2+ \epsilon_{\frac{n}{2}}(\nabla)^2
+\calS(R)\epsilon_{\frac{n}{2}}(d)
\\&\quad+5\epsilon_{\frac{n}{2}}(\sigma)+ \sqrt{\frac{2}{n}}\epsilon_{\frac{n}{2}}(\nabla)\epsilon_{\frac{n}{2}}(\Delta)
+O(\frac{1}{n}).
\label{asterix}
\end{meqn}
We know work on the variance. First of all we note that:
 \begin{meqn}
 &
\Big\|\hat{S}^2_{\rm{cv}} - \frac{2}{n}\sum_{i\le \frac{n}{2}}\bigl(\nabla_i K^i_n + \sum_{l\ne i} \nabla_iR(\hat{f}_{b_{\frac{n}{2}}(l)})\bigr)^2\Big\|_{L_1}
\\&\le \frac{2}{n}\sum_{i\le \frac{n}{2}}\norm[\Big]{\sum_{l\ne i} \nabla_i K^l_n}_{L_2}^2
\\&\le \frac{2}{n}\sum_{i\le \frac{n}{2}}\sum_{j, l\ne i}\EE\Bigl[\nabla_i K^l_n \: \nabla_i K^j_n\Bigr]
\\&\le \frac{2}{n}\sum_{i\le \frac{n}{2}}\Big[\sum_{j\ne i}\EE\bigl[ (\nabla_i K^j_n)^2\bigr] + \sum_{j \ne l}\EE\bigl[ \nabla_i K^l_n \: \nabla_i K^j_n \bigr]\Big]
\\&\le \frac{2}{n}\sum_{i\le \frac{n}{2}}\Bigl[\epsilon_{\frac{n}{2}}(\Delta)^2 + \sum_{j \ne l}\EE\bigl[ \nabla_{i,j}K^l_n \: \nabla_{i,l} K^j_n\bigr]\Bigr]
\\&\le \epsilon_{\frac{n}{2}}(\Delta)^2+ \epsilon_{\frac{n}{2}}(\nabla)^2.
\end{meqn}
Therefore we only need to study the variance of:
\begin{equation}
\frac{2}{n}\sum_{i\le \frac{n}{2}}\big[\nabla_i K^i_n+ \sum_{l\ne i} \nabla_i R(\hat{f}_{b_{\frac{n}{2}}(l)})\big]^2.
\end{equation}
By the Efron-Stein inequality one can notice that the following holds:
 \begin{meqn}
 &\Var\Bigl(\frac{2}{n}\sum_{i\le \frac{n}{2}}\Bigl(\nabla_iK^i_n+ \sum_{l\ne i} \nabla_i R(\hat{f}_{b_{\frac{n}{2}}(l)})\Bigr)^2\Bigr)
\\&\quad\oversetclap{(a)}{\le}
\frac{4}{n^2} \sum_j \EE\Bigl[\Bigl(\sum_{i\le \frac{n}{2}} \nabla_j \bigl\{ \bigl(\nabla_i K^i_n + \sum_{l\ne i} \nabla_i R(\hat{f}_{b_{\frac{n}{2}}(l)})\bigr)^2 \bigr\} \Bigr)^2\Bigr]
\\&\quad\oversetclap{(b)}{\le}\frac{16}{n^2} \sup_i\Big\|\nabla_i K^i_n + \sum_{l\ne i} \nabla_i R(\hat{f}_{b_{\frac{n}{2}}(l)})\Big\|^2_{L_4} \sum_j\Bigl(\sum_{i\le \frac{n}{2}}\Big\|\nabla_{i,j} K^i_n + \sum_{l\ne i} \nabla_{i,j}R(\hat{f}_{b_{\frac{n}{2}}(l)})\Big\|_{L_4}\Bigr)^2,
\end{meqn}
where (a) is an application of the Efron-Stein inequality and (b) is a consequence of the Cauchy-Schwarz inequality.
We bound each term of separately.
To start, note that:
\begin{equation}
\Big\|\nabla_iK^i_n+ \sum_{l\ne i} \nabla_iR(\hat{f}_{b_{\frac{n}{2}}(l)})\Big\|_{L_4}
\le \calS+\calS(R).
\end{equation}
Secondly if $j\ne i$ then we obtain that:
\begin{equation}
\Big\|\nabla_{i,j} K^i_n+ \sum_{l\ne i} \nabla_{i,j}R(\hat{f}_{b_{\frac{n}{2}}(l)})\Big\|_{L_4}
\le\sqrt{\frac{2}{n}}\bigl(\epsilon_{\frac{n}{2}}(\Delta) + \epsilon_{\frac{n}{2}}(\hat{R})\bigr).
\end{equation}
This implies that:
 \begin{meqn}
 &\Var\Bigl[\frac{2}{n}\sum_{i\le \frac{n}{2}}\bigl(\nabla_i K^i_n + \sum_{l\ne i} \nabla_iR(\hat{f}_{b_{\frac{n}{2}}(l)})\bigr)^2\Bigr]
\\&\quad \le \frac{32}{n}\bigl(\calS+\calS(R)\bigr)^4 + 32\Big[\calS+\calS(R)\Big]^2\Big[ \epsilon_{\frac{n}{2}}(\Delta)+\epsilon_{\frac{n}{2}}(\hat{R})\Big]^2,
\end{meqn}
from which we may deduce that:
 \begin{meqn}
 \norm[\Big]{\hat{S}_{\text{cv}}^2 - \sigma^2_{\rm{cv}}}_{L_1}
 &\le \frac{4\sqrt{2}}{\sqrt{n}}\bigl(\calS+\calS(R)\bigr)^2 + 4\sqrt{2}\Big[\calS+\calS(R)\Big]\Big[ \epsilon_{\frac{n}{2}}(\Delta)+\epsilon_{\frac{n}{2}}(\hat{R})\Big] + \epsilon_{\frac{n}{2}}(\Delta)^2
 \\&\quad+{6\epsilon_{\frac{n}{2}}(\Delta)\epsilon_{\frac{n}{2}}(\hat{R}) + 2\epsilon_{\frac{n}{2}}(\hat{R})^2+4 \epsilon_{\frac{n}{2}}(\nabla)^2+\calS(R)\epsilon_{\frac{n}{2}}(d)}+5\epsilon_{\frac{n}{2}}(\sigma)
 \\&\quad + \sqrt{\frac{2}{n}}\epsilon_{\frac{n}{2}}(\nabla)\epsilon_{\frac{n}{2}}(\Delta)
 +O(\frac{1}{n}).
\end{meqn}

\end{proof}

\subsection{Confidence Interval for Ridge Regression}
\subsubsection{Fast computation of replace-one estimate}
\label{sec:proof_confidence_ridge}

In general, the estimator proposed in \cref{cannes} is computationally intractable for large values of $n$, as a naive computation requires $nk$ different fits of the estimator, where $n$ denotes the number
of samples and $k$ the number of folds.
For the special case of the ridge estimator, we derive a simplification based on its specific form as a linear smoother, and the existence of closed-form LOOCV estimates for such estimators.

More precisely, let $\lambda > 0$ be a tuning parameter, and consider the ridge estimator:
\begin{equation}
    \hat{\theta}(X) = \argmin_{\theta \in \RR^p} \frac{1}{n} \sum_{i = 1}^n (y_i - x_i^\top\theta)^2 + \lambda \norm{\theta}_2^2.
\end{equation}
We may consider the estimator proposed in \cref{cannes} for the square loss:
\begin{equation}
    \hat{R}^{n/2}_{\text{cv}} = \frac{2}{n} \sum_{j = 1}^k \norm{y_{\underline{j}} - X_{\underline{j}}\hat{\theta}^{j}}_2^2,
\end{equation}
where $(y_{\underline{j}}, X_{\underline{j}})$ denotes the observation data in the $j$th fold, and $\hat\theta^{j}$ denotes
the cross-validated estimate computed on all except the $j$th fold.
Now, we have that:
\begin{meqn}
    \hat{S}_{\text{cv}}^2
    &= \frac{n}{2}\sum_{i = 1}^{n / 2} \bigl(\hat{R}_{\text{cv}}(X) - \hat{R}_{\text{cv}}(X^i)\bigr)^2 \\
    &= \frac{2}{n}\sum_{i = 1}^{n / 2} \Bigl\{ (y_i - x_i^\top \hat{\theta}^{b(i)})^2 - (y'_i - x_i'^{\top}\hat{\theta}^{b(i)})^2 + \sum_{j \neq b(i)}^k \norm{y_{\underline{j}} - X_{\underline{j}} \hat{\theta}^j}_2^2 - \norm{y_{\underline{j}} - X_{\underline{j}} \hat{\theta}^{j, i}}_2^2 \Bigr\}^2,
\end{meqn}
where we have written $\hat{\theta}^{j,i} = \hat\theta^j(X^i)$ the estimator trained on all folds of $X^i$ except the $j$th fold.
In order to efficiently compute $\hat{S}_{\text{cv}}^2$, we are interested in computing $\hat{\theta}^{j,i}$
efficiently for all $i$.
In the case of ridge, this is possible due to the following fact:
\begin{align*}
    \hat{\theta}(X^i)
    &= \Bigl( \sum_{j} x_j x_j^\top + \lambda I - x_i x_i^\top + x'_i x_i'^{\top} \Bigr)^{-1} \Bigl(\sum_{j} y_j x_j - y_i x_i + y_i' x_i'\Bigr) \\
    &= \bigl( S + U D U^\top \bigr)^{-1} \Bigl(\sum_{j} y_j x_j - y_i x_i + y_i' x_i'\Bigr),
\end{align*}
where we have written:
\begin{equation}
    S_{p \times p} = \sum_{j} x_j x_j^\top + \lambda I, \quad
    U_{p \times 2} = \begin{pmatrix} x_i^\top \\ x_i'^{\top}\end{pmatrix}, \quad
    D_{2 \times 2} = \begin{pmatrix} -1 & 0 \\ 0 & 1 \end{pmatrix}.
\end{equation}
By the well-known Woodbury identity, we have that:
\begin{equation}
    (S + U D U^\top)^{-1} = S^{-1} - S^{-1} U (D^{-1} + U^\top S^{-1} U)^{-1} U^\top S^{-1},
\end{equation}
from which we may see that computing $S^{-1}$ (which does not depend on $i$) is sufficient to efficiently compute $\hat{\theta}(X^i)$ for any $i$.
More explicitly, writing:
\begin{equation}
h_{rs} = x_r^\top S^{-1} x_s, \quad h'_{rs} = x_r^\top S^{-1} x_s', \quad h''_{rs} = x_r'^{\top} S^{-1} x_s',
\end{equation}
we may compute explicitly:
\begin{gather*}
    U^\top S^{-1} U = \begin{pmatrix}
        h_{ii} & h'_{ii} \\ h'_{ii} & h''_{ii}
    \end{pmatrix}, \\
    (D^{-1} + U^\top S^{-1} U)^{-1} = -\frac{1}{(1 - h_{ii})(1 + h''_{ii}) + h_{ii}^{\prime \, 2}}
    \begin{pmatrix}
        1 + h''_{ii} & - h'_{ii} \\ - h'_{ii} & h_{ii} - 1
    \end{pmatrix}, \\
    (S + U D U^\top)^{-1} = S^{-1} + \frac{1}{(1 - h_{ii})(1 + h''_{ii}) + h_{ii}^{\prime \, 2}}
        \Bigl(
        (1 + h''_{ii}) S^{-1}x_i x_i^\top S^{-1} \\
        \quad - h'_{ii} S^{-1} x_i x_i'^{\top} S^{-1}
        - h'_{ii} S^{-1} x_i' x_i^\top S^{-1} + (h_{ii} - 1) S^{-1} x_i'^{\top} x_i' S^{-1}\Bigr).
\end{gather*}
Putting everything together, we thus deduce that:
\begin{meqn}
    \hat{\theta}(X^i)
    &= \hat{\theta}(X) - y_i S^{-1} x_i + y'_i S^{-1} x_i'  \\
    &\quad + \frac{S^{-1}( (1 + h''_{ii}) x_i x_i^\top - h'_{ii} x_i x_i^{\prime\top} - h'_{ii} x_i' x_i^\top + (h_{ii} - 1) x'_i x_i^{\prime\top})\hat{\theta}}{ (1 - h_{ii})(1 + h''_{ii}) + h_{ii}^{\prime \, 2}} \\
    &\quad - \frac{y_i S^{-1}( (1 + h''_{ii}) h_{ii} x_i - h'_{ii} x_i h'_{ii} - h'_{ii} x_i' h_{ii}  + (h_{ii} - 1) x_i' h'_{ii})}{(1 - h_{ii})(1 + h''_{ii}) + h_{ii}^{\prime \, 2}} \\
    &\quad + \frac{y_i' S^{-1}( (1 + h''_{ii}) h'_{ii} x_i - h'_{ii} h''_{ii} x_i - h'_{ii} h'_{ii} x_i' + (h_{ii} - 1) h''_{ii} x_i')}{ (1 - h_{ii})(1 + h''_{ii}) + h_{ii}^{\prime \, 2}}
\end{meqn}
Simplifying somewhat, we obtain that:
\begin{meqn}
    \hat{\theta}(X^i)
    &= \hat{\theta}(X) - y_i S^{-1} x_i + y'_i S^{-1} x_i'  \\
    &\quad + \frac{S^{-1}( (1 + h''_{ii}) x_i x_i^\top - h'_{ii} x_i x_i^{\prime\top} - h'_{ii} x_i' x_i^\top + (h_{ii} - 1) x'_i x_i^{\prime\top})\hat{\theta}}{ (1 - h_{ii})(1 + h''_{ii}) + h_{ii}^{\prime \, 2}} \\
    &\quad - \frac{y_i S^{-1}( (h_{ii} + h''_{ii} h_{ii} - h_{ii}^{\prime \, 2}) x_i - h'_{ii} x_i')}{(1 - h_{ii})(1 + h''_{ii}) + h_{ii}^{\prime \, 2}} \\
    &\quad + \frac{y_i' S^{-1}( h'_{ii} x_i + (h_{ii} h''_{ii} - h''_{ii} - h_{ii}^{\prime \, 2}) x_i')}{ (1 - h_{ii})(1 + h''_{ii}) + h_{ii}^{\prime \, 2}} \label{eq:ridge-swap-one}
\end{meqn}
The formula in \cref{eq:ridge-swap-one} may be leveraged to compute the estimator proposed in \cref{cannes} in a computationally tractable fashion.

\subsubsection{Details of simulation}
\label{sec:confidence-ridge-simulation}

The simulation was performed for a ridge estimator, where the data was simulated according to the following process:
\begin{gather*}
    x_i \sim \calN(0, S_X), \quad \epsilon_i \sim \calN(0, \sigma^2), \\
    \beta^* = \frac{1}{\sqrt{p}} (1, \dotsc, 1)^\top, \\
    y_i = x_i^\top \beta^* + \epsilon_i.
\end{gather*}
In \cref{table:ridge-confidence}, the simulation was performed with $p = 3$, $S_X$ being Toeplitz with
diagonal elements being $1$, $0.5$ and $0.25$ respectively, and $\sigma^2 = 1$.
For each value of $n$, 5000 replicates were computed, and cross-validated risk and estimates of its variance computed.

\subsection{Proof of proposition~\ref{ien}}
\label{sec:proof_parametric_rho}
\begin{proof}
As the conditions $(H_0)-(H_3)$ of \cref{clt3} hold, we know that we need to study the quantity:
\begin{equation}
    \lim_{n\rightarrow \infty}\max_{l_1,l_2\in N_n}l_1\EE\Bigl[\bar\calL_{l_2,n}(X_1)\Bigl(\EE\bigl[R(\hat\theta_{l,n}) \mid X_1\bigr] -R_{l_1,n}\Bigr)\Bigr].
\end{equation}
Moreover by the mean value theorem we know that there is $\tilde\theta \in \bigl[\hat{\theta}_{l_1,n}(X_{1:l_1}), \hat{\theta}_{l_1,n}(X^1_{1:l_1})\bigr]$ (where here and below, $[a, b]$ denotes the line segment between $a, b \in \RR^d$) such that the following holds: 
\begin{meqn}
&\EE\bigl[R(\hat{\theta}_{l_1,n}) \mid X_1\bigr] - R_{l_1,n}
  \\&= \EE\bigl[\nabla_1 R(\hat{\theta}_{l_1,n}) \mid X_1\bigr]
  \\&= \EE\bigl[\bigl(\partial_\theta R(\tilde \theta)\bigr)^\top \bigl(\nabla_1 \hat{\theta}_{l_1,n}\bigr) \mid X_1\bigr]
  \\&= \EE\bigl[\bigl(\partial_\theta R(\hat{\theta}^*)\bigr)^\top \bigl(\nabla_1 \hat{\theta}_{l_1,n}\bigr) \mid X_1\bigr]+ \EE\bigl[\bigl(\partial_\theta R(\tilde \theta)) - \partial_\theta R(  \theta^*)\bigr)^\top \bigl(\nabla_1 \hat{\theta}_{l_1,n}\bigr) \mid X_1\bigr]
  \\&\oversetclap{(a)}{=} \partial_\theta R(\theta^*)^\top \EE\bigl[\nabla_1 \hat{\theta}_{l_1,n} \mid X_1\bigr]+ o_p(\frac{1}{n}).
\end{meqn}
where to get (a) we used the $L_2$ consistency of $\hat{\theta}_{l_1,n}(X_{1:l_1})$ coupled with the fact that $n\EE\bigl[\nabla_1 \hat{\theta}_{l_1,n} \mid X_1\bigr] = O(1)$.
Moreover by another application of the mean value theorem we know that there is $\tilde\theta \in  \bigl[\hat{\theta}_{l_1,n}(X_{1:l_1}), \hat{\theta}_{l_1,n}(X^1_{1:l_1})\bigr]$ such that the following holds:
 \begin{meqn}&
  \frac{1}{l_1}\sum_{i \le l_1}\partial_{\theta}\Psi(X^1_i, \hat{\theta}_{l_1,n}) 
  \\
  &=\frac{1}{l_1}\sum_{i \le l_1}\partial_{\theta}\Psi(X^1_i, \hat{\theta}_{l_1,n}(X^1_{1:l_1})) 
    + \Bigl(\frac{1}{l_1}\sum_{i\le i} \partial_\theta^2 \Psi(X^1_i,\tilde\theta)\Bigr) \bigl(\hat{\theta}_{l_1,n}(X_{1:l_1})-\hat{\theta}_{l_1,n}(X^1_{1:l_1})\bigr)
  \\
  &\oversetclap{(a)}{=} \Bigl(\frac{1}{l_1}\sum_{i \leq l_1} \partial_\theta^2 \Psi(X^1_i,\tilde\theta)\Bigr) \bigl(\hat{\theta}_{l_1,n}(X_{1:l_1})-\hat{\theta}_{l_1,n}(X^1_{1:l_1})\bigr)
\end{meqn}
where (a) is a consequence of the first order optimality conditions of $\hat{\theta}$ as a minimizer.
Moreover we remark that:
\begin{meqn}
  \frac{1}{l_1}\sum_{i \le l_1}\partial_{\delta}\Psi(X^1_i, \hat{\theta}_{l_1,n}) 
 &= \frac{1}{l_1}\sum_{i \le l_1}
    \Bigl\{\partial_\theta \Psi(X_i, \hat{\theta}_{l_1,n}) +\partial_{\delta}\Psi(X^1_i, \hat{\theta}_{l_1,n}) -\partial_{\delta}\Psi(X_i, \hat{\theta}_{l_1,n})\Bigr\}
    \\&\oversetclap{(a)}{=}\frac{1}{l_1}\sum_{i \leq l_1} \Bigl\{\partial_{\theta}\Psi(X^1_i, \hat{\theta}_{l_1,n}) -\partial_{\theta}\Psi(X_i, \hat{\theta}_{l_1,n}) \Bigr\},
\end{meqn}
where (a) is again  due to the first-order optimality conditions of $\hat\theta$ as a minimizer.
Therefore we have 
 \begin{meqn}
 &\EE\bigl[\nabla_1 \hat{\theta}_{l_1,n}(X_{1:l_1}) \mid X_1\bigr] \\
 &= -\frac{1}{l_1}\EE\Bigl[
    \bigl\{\frac{1}{l_1}\sum_{i\le l_1}H_\theta\Psi(X_i,\tilde\theta)\bigr\} 
    \bigl\{\partial_\theta \Psi(X_1, \hat{\theta}_{l_1,n}(X_{1:l_1})) - \partial_\theta \Psi(X'_1,\hat{\theta}_{l_1,n}(X_{1:l_1}))\bigr\} \mid X_1\Bigr]
  \\
  &\oversetclap{(a)}{=} -\frac{1}{l_1} \bigl(\EE H_{\theta} \Psi(X_1, \theta^*)\bigr)^{-1} \EE\bigl[\partial_\theta \Psi(X_1,\theta^*) -\partial_\theta \Psi(X'_1, \theta^*) \mid X_1 \bigr] + o_p(\frac{1}{n}).
\end{meqn}
where to get (a) we exploited once again the consistency of the estimator $\hat{\theta}_{l_1,n}(X_{1:l_1})$ as well as the law of large number. This implies that:
 \begin{equation}
 \rho =- \Cov\Bigl(\bigl(\partial_\theta R(\theta^*)\bigr)^\top \EE\bigl[H_{\theta}\Psi(X_1, \theta^*)\bigr]^{-1} \bigl(\partial_\theta \Psi(X_1,\theta^*)\bigr), \calL(X_1,\theta^*)\Bigr).
\end{equation}
\end{proof}

\subsection{Proof of proposition~\ref{ien_2}}
\label{sec:proof_conditions_parametric}

\begin{proof}

The key of the proof is to exploit the strict convexity of the loss function. We denote:
\begin{equation}\label{eq:proof_parametric_hat_theta_def}
  \hat{\theta}(X) \eqdef \argmin_{\theta \in \RR^d}\frac{1}{n}\sum_{i \leq n}\Psi(X_i, \theta).
\end{equation}
By abuse of notation, we write $\hat\theta = \hat\theta(X)$ and $\hat\theta^1 = \hat\theta(X^1)$.
Let $\norm{\cdot}_{L_p(v)}$ denote the $L_p$ vector norm on $\RR^d$.
We prove the desired result by checking that the conditions of theorem 3 are satisfied.
Towards this goal, we first bound $\norm[\big]{\hat{\theta} - \hat{\theta}^1}_{L_2(v)}$.
We assume w.l.o.g. that $\hat\theta(X) \overset{a.s.}{\in}\calV$.
By Taylor expansion, we have that there is $\tilde\theta \in \bigl[\hat\theta,\hat\theta^1 \bigr]$ (where here and below, $\bigl[a, b\bigr]$ denotes the line segment between $a, b \in \RR^d$) such that:
\begin{meqn}
    \sum_{i\le n}\partial_\theta \Psi(X_i,\hat\theta^1)
    &=\sum_{i\le n}\partial_\theta \Psi(X_i,\hat\theta) + \sum_{i\le n} \bigl(\partial^2_\theta \Psi(X_i,\tilde\theta)\bigr) \bigl(\hat\theta^1 - \hat\theta \bigr)
    \\&\oversetclap{(a)}{=}\sum_{i\le n} \bigl(\partial^2_\theta \Psi(X_i,\tilde\theta)\bigr)(\hat\theta^1 - \hat\theta),
\end{meqn}
where to get (a) we used the fact that $\hat\theta$ satisfies the first-order optimality condition for \eqref{eq:proof_parametric_hat_theta_def}.
Moreover we have that:
 \begin{equation}
 \sum_{i\le n}\partial_\theta \Psi(X_i,\hat\theta(X^1))
 =\sum_{i\le n}\partial_\theta \Psi(X^1_i,\hat\theta(X^1)) + \partial_\theta \Psi(X_1,\hat\theta(X^1)) - \partial_\theta \Psi(X'_1, \hat\theta(X^1)),
\end{equation}
from which we may deduce:
\begin{equation}
\norm[\big]{\hat\theta - \hat\theta^1}_{L_2(v)}
\leq \frac{\norm{\partial_\theta \Psi(X_1, \hat\theta^1)-\partial_\theta \Psi(X'_1,\hat\theta^1)}_{L_2(v)}}{\sum_{i\le n}\lambda_{\mathrm{min}}\bigl(\partial^2_\theta \Psi(X_i,\tilde\theta)\bigr)}.
\end{equation}
We can use this to verify the conditions of theorem 3.
Note that for any independent sample $\tilde X$ we have:
\begin{meqn}
&\abs[\big]{\calL\bigl(\tilde X, \hat\theta \bigr)-\calL\bigl(\tilde X,\hat\theta^1 \bigr)}
\\&
\le \sup_{\theta \in \calV} \norm[\big]{\partial_\theta \calL\bigl(\tilde X, \theta\bigr)}_{L_2(v)}
    \frac{\norm[\big]{\partial_\theta \Psi(X_1,\hat\theta^1) - \partial_\theta \Psi(X'_1, \hat\theta^1)}_{L_2(v)}}{\sum_{i\le n}\lambda_{\mathrm{min}}\bigl(\partial^2_\theta \Psi(X_i,\tilde\theta)\bigr)}.
\end{meqn}
Therefore we have by the Cauchy-Schwarz inequality:
 \begin{align*}
 \epsilon_n(\Delta)&\le \frac{1}{\delta\sqrt{n}}
    \norm[\Big]{\sup_{\theta \in \calV}\norm[\big]{\partial_\theta \Psi(X, \theta)}_{L_2(v)}}_{L_8}
    \norm[\Big]{\sup_{\theta \in \calV}\norm[\big]{\partial_\theta \calL(X, \theta)}_{L_2(v)}}_{L_8}, \\
 \calS(R)&\le \frac{1}{\delta}
    \norm[\Big]{\sup_{\theta \in \calV}\norm[\big]{\partial_\theta \Psi(X, \theta)}_{L_2(v)}}_{L_8}
    \norm[\Big]{\sup_{\theta \in \calV}\norm[\big]{\partial_\theta \calL(X, \theta)}_{L_2(v)}}_{L_8}.
\end{align*}

Now write $\hat\theta^{1,2} = \hat\theta(X^{1,2})$, and
note that there exists $\tilde\theta,\tilde\theta_2\in \RR^k$ verifying  respectively $\tilde\theta \in [\hat\theta, \hat\theta^1]$ and $\tilde\theta_2 \in [\hat\theta^2, \hat\theta^{1,2}]$ such that :
 \begin{meqn}
&\norm[\big]{\hat\theta - \hat\theta^1 - \bigl(\hat\theta^2 - \hat\theta^{1,2}\bigr)}_{L_2(v)} \\
&=\Bigl\| \Bigl(\sum_{i\le n} \partial_\theta^2 \Psi(X_i, \tilde \theta)\Bigr)^{-1} \bigl(\partial_\theta \Psi(X_1, \hat\theta) - \partial_\theta \Psi(X'_1, \hat\theta)\bigr) \\
&\quad - \Bigl(\sum_{i\le n} \partial_\theta^2 \Psi(X_i, \tilde \theta_2) \Bigr)^{-1} \bigl(\partial_\theta \Psi(X_1, \hat\theta^2) - \partial_\theta \Psi(X'_1,\hat\theta^2) \bigr)\Bigr\|_{L_2(v)}
\\&\le \norm[\Big]{\Bigl\{\Bigl(\sum_{i\le n} \partial^2_\theta \Psi(X_i, \tilde\theta)\Bigr)^{-1} - \Bigl(\sum_{i\le n} \partial^2_\theta \Psi(X_i, \tilde\theta_2)\Bigr)^{-1}\Bigr\}
    \Bigl\{\partial_\theta \Psi(X_1,\hat\theta^2) - \partial_\theta \Psi(X'_1, \hat\theta^2) \Bigr\}}_{L_2(v)}
\\&\quad+ \Big\| \Bigl(\sum_{i\le n} \partial^2_\theta \Psi(X_i, \tilde\theta)\Bigr)^{-1}
\bigl(\partial_\theta \Psi(X_1, \hat\theta^2) - \partial_\theta\Psi(X'_1, \hat\theta^2)
- \partial_\theta \Psi(X_1,\hat\theta) - \partial_\theta \Psi(X'_1, \hat\theta)\bigr)\Big\|_{L_2(v)}
\\&\le A+B.
\end{meqn}
We  bound each term successively.
Firstly we can see that there exists $\tilde f_3\in [\hat{f}(X^1),\hat{f}(X^{1,2})] $ such that the following holds:
\begin{meqn}
\norm{B}_{L_8} &\leq \Bigl\| \Bigl\| \Bigl(\sum_{i\le n} \partial^2_\theta \Psi(X_i, \tilde\theta)\Bigr)^{-1} \bigl(\partial_\theta \Psi(X_1, \hat\theta^2) - \partial_\theta \Psi(X'_1, \hat\theta^2)\\ &\qquad \qquad \qquad \qquad \qquad
- \partial_\theta \Psi(X_1, \hat\theta) - \partial_\theta \Psi(X'_1,\hat\theta) \bigr)\Big\|_{L_2(v)}\Big\|_{L_8}
\\
&\leq \frac{1}{n\delta} \norm[\Big]{ \norm[\big]{\partial_\theta \Psi(X_1, \hat\theta^2) - \partial_\theta \Psi(X'_1, \hat\theta^2) - \partial_\theta \Psi(X_1, \hat\theta) - \partial_\theta \Psi(X'_1, \hat\theta) }_{L_2(v)}}_{L_8}
\\&\le \frac{1}{n\delta} \norm[\Big]{ \lambda_{\mathrm{max}} \bigl( \partial_\theta^2 \Psi(X_1, \tilde\theta_3) - \partial_\theta^2 \Psi(X'_1, \tilde\theta_3)\big) \norm[\big]{\hat\theta^{1,2} - \hat\theta^{1}}_{L_2(v)} }_{L_8} 
\\&\leq \frac{2}{n^2 \delta^2}\norm[\Big]{  \sup_{\theta_1, \theta_2 \in \calV} \norm[\big]{\partial_\theta \Psi(X_1, \theta_1) }_{L_2(v)} \lambda_{\mathrm{max}} \bigl(\partial^2_\theta \Psi(X_1, \theta_2)\bigr)}_{L_8}.
\label{g_1}
\end{meqn}
Moreover the following holds:
\begin{meqn}
\norm{A}_{L_8} &
\leq \norm[\Big]{\norm[\Big]{\Bigl(\Bigl(\sum_{i\le n} \partial^2_\theta \Psi(X_i,\tilde\theta)\Bigr)^{-1} - \Bigl(\sum_{i\le n} \Psi(X_i, \tilde\theta_2)\Bigr)^{-1} \Bigr)
    \bigl(\partial_\theta \Psi(X_1, \hat\theta^2) - \partial_\theta \Psi(X'_1, \hat\theta^2 )\bigr)}_{L_2(v)}}_{L_8}
\\&
\leq \norm[\Big]{\norm[\big]{\bigl(\sum_{i\le n}\partial^2_\theta \Psi(X_i, \tilde\theta)\bigr)^{-1} - \bigl(\sum_{i\le n} \partial^2_\theta \Psi(X_i, \tilde\theta_2)\bigr)^{-1}}_{op} 
\norm[\big]{\partial_\theta \Psi(X_1, \hat\theta) - \partial_\theta \Psi(X'_1, \hat\theta)}_{L_2(v)}}_{L_8}
\\
&\oversetclap{(a)}{\le} \frac{1}{n^2 \delta^2} \norm[\Big]{
    \norm[\big]{\sum_{i\le n} \partial^2_\theta \Psi(X_i, \tilde\theta) - \sum_{i\le n} \partial_\theta \Psi(X_i, \tilde\theta_2)}_{op}
    \norm[\big]{\partial_\theta \Psi(X_1, \hat\theta) - \partial_\theta \Psi(X'_1, \hat\theta)}_{L_2(v)}}_{L_8}
\\
&\le \frac{2}{n\delta^2} \norm[\Big]{\norm[\big]{\partial^2_\theta \Psi(X_1, \hat\theta\big) - \partial^2_\theta \Psi(X_1, \theta^*)}_{op}}_{L_{16}}
    \norm[\Big]{\sup_{\theta \in \calV} \norm[\big]{\partial_\theta \Psi(X_1, \theta)}_{L_2(v)} }_{L_{16}},
\label{g_2}
\end{meqn}
where to get (a) we used the fact that for two matrices $C$ and $D$ the following holds: 
\begin{equation}
\norm[\big]{C^{-1}-D^{-1}}_{\mathrm{op}} \leq \norm[\big]{C^{-1}}_{\rm{op}} \norm[\big]{D^{-1}}_{\rm{op}} \norm[\big]{C-D}_{\rm{op}},
\end{equation}
where $\norm{\cdot}_{\text{op}}$ denotes the operator 2-norm.
Therefore this implies that there exists $\tilde\theta \in [\hat\theta - \hat\theta^1]$ and $\tilde\theta_2 \in [\hat\theta^2 - \hat\theta^{1,2}]$ such that 
\begin{meqn}
&
n \abs[\Big]{\calL(\tilde X, \hat\theta) - \calL(\tilde X,\hat\theta^1) - \bigl( \calL(\tilde X,\hat\theta^2) - \calL(\tilde X,\hat\theta^{1,2}) \bigr)}
\\
&\quad\le n\abs[\Big]{ \bigl(\hat\theta - \hat\theta^1\bigr)^\top \bigl(\partial_\theta \calL(X, \tilde \theta)\bigr)
    - \bigl(\hat\theta^2 - \hat\theta^{1, 2}\bigr)^\top \bigl(\partial_\theta \calL(X, \tilde\theta_2) \bigr)}
\\
&\quad\le  n\Bigl\lvert \norm[\big]{\hat\theta - \hat\theta^1 - [\hat\theta^2 - \hat\theta^{1, 2}]}_{L_2(v)} 
        \norm[\big]{\partial_\theta \calL(X, \tilde\theta_2)}_{L_2(v)}
\\
&\qquad+ \norm[\big]{\hat\theta - \hat\theta^1}_{L_2(v)}
    \norm[\big]{\partial_\theta \calL(X, \tilde\theta) - \partial_\theta \calL(X, \tilde\theta_2)}_{L_2(v)}\Bigr\rvert.
    \label{g_3}
\end{meqn}
Therefore using \cref{g_1,g_2,g_3} we obtain that:
\begin{meqn}
\epsilon_n(\nabla) &\le\frac{2}{\delta^2}\Big[\frac{1}{n}
    \norm[\Big]{\sup_{\theta \in \calV} \lambda_{max}\bigl(\partial^2_\theta \Psi(X_1, \theta)\bigr)}_{L_{16}}
    + \norm[\Big]{\lambda_{max} \bigl( \partial^2_\theta \Psi(X_1, \hat\theta) - \partial^2_\theta \Psi(X_1, \theta^*)\big)}_{L_{16}}\Big]
\\&\qquad \quad \times \norm[\Big]{\sup_{\theta \in \calV} \norm[\big]{\partial_\theta \calL(X, \theta)}_{L_2(v)}}^2_{L_{16}}
\\&\quad+ \frac{4}{n\delta^2}\norm[\Big]{\sup_{\theta \in \calV}\norm[\big]{\partial_\theta \Psi(X_1, \theta)}_{L_2(v)}}_{L_{12}}^2
    \norm[\Big]{\lambda_{max} \{\partial^2_\theta \calL(\tilde X, \theta^*)\} }_{L_{12}}.
\end{meqn}
Similarly one can prove that $\epsilon_n(R)\rightarrow 0$.

Note that by the consistency of $\hat\theta$ we have  $\Var\bigl(\calL(\tilde X, \hat\theta)\bigr)\rightarrow \Var\bigl(\calL(\tilde X, \theta^*)\bigr)$. Similarly using \cref{ien} we can prove that
\begin{equation}
    \rho =- \Cov\Bigl( \bigl(\partial_\theta R(\theta^*)\bigr)^\top \bigl(\EE\partial^2_\theta \Psi(X_1, \theta^*)\bigr)^{-1} \bigl(\partial_\theta \Psi(X_1, \theta^*)\bigr), \calL(X_1, \theta^*)\Bigr).
\end{equation}
Therefore the only quantity left to study is:
\begin{equation}
    \lim_n \max_{l_1\in N_n} l_1 \Var\bigl[R(\hat\theta_{l_1,n})\bigr].
\end{equation}
Let $(F_i)$ be the filtration such that $F_i=\sigma(X_1,\dots,X_i)$.
One can note that the following holds for all $l_1\in N_n$:
\begin{meqn}
 \Var\bigl[R(\hat\theta_{l_1,n})\bigr]
 &=\sum_{i\le l_1} \EE\Bigl[\Bigl(\EE\bigl[R(\hat\theta_{l_1,n}) \mid F_i\bigr] - \EE\bigl[R(\hat\theta_{l_1,n}) \mid F_{i-1}\bigr]\Bigr)^2\Bigr]
  \\&=\sum_{i\le l_1 } \EE\Bigl[\EE\bigl(\nabla_i R(\hat\theta_{l_1,n}) \mid F_i\bigr)^2\Bigr].
\end{meqn}
Using a telescopic sum argument we obtain that
\begin{meqn}
&
\abs[\Big]{ \EE\Bigl[\EE\bigl(\nabla_i R(\hat\theta_{l_1,n}) \mid F_i\bigr)^2\Bigr]
    - \EE\Bigl[\EE\bigl(\nabla_i R(\hat\theta_{l_1,n}) \mid X_i\bigr)^2\Bigr] }
\\&\le\sum_{j\le i} \abs[\Big] {\EE\Bigl[\EE\bigl(\nabla_i R(\hat\theta_{l_1,n}) \mid X_{\dbracket{j}\cup\{i\}}\bigr)^2\Bigr]
    -\EE\Bigl[\EE\bigl(\nabla_i R(\hat\theta_{l_1,n}) \mid X_{\dbracket{j-1}\cup\{i\}}\bigr)^2\Bigr] }
\\&\le 2\sum_{j<i} \abs[\Big]{ \EE\Bigl[\EE\bigl(\nabla_{i,j} R(\hat\theta_{l_1,n}) \mid X_{\dbracket{j}\cup\{i\}}\bigr)^2\Bigr] }
\\&\le \frac{2}{n^2}\epsilon_{\frac{n}{2}}(\hat{R})^2.
\end{meqn}
Therefore we deduce that:
\begin{equation}
   \lim_n \max_{l_1 \in N_n} \abs[\Big]{ l_1\Var\bigl[R(\hat\theta_{l_1,n})\bigr]
    - l_1^2\EE\Bigl[\bigl(\EE\bigl[R(\hat\theta_{l_1,n}) \mid X_1\bigr] - R_{l_1,n}\bigr)^2\Bigr] }\rightarrow 0.
\end{equation}

We now consider:
\begin{equation}
    \lim_{l_1\in N_n}l_1^2\EE\Bigl[\Bigl( \EE\bigl[R(\hat\theta_{l_1,n}) \mid X_1\bigr] - R_{l_1,n}\Bigr)^2\Bigr].
\end{equation}
By the mean value theorem we know that there is $\tilde\theta \in [\hat\theta_{l_1}, \hat\theta_{l_1}^1]$ such that the following holds: 
 \begin{meqn}
 &\EE\bigl[R(\hat\theta_{l_1}) \mid X_1\bigr] - R_{l_1,n}
  \\&= \EE\bigl[R(\hat\theta_{l_1}) - R(\hat\theta_{l_1}^1) \mid X_1\bigr]
  \\&= \EE\bigl[\bigl(\partial_\theta R(\tilde\theta)\bigr)^\top \bigl(\nabla_1 \hat\theta_{l_1,n}(X_{1:l_1})\bigr) \mid X_1\bigr]
  \\&= \EE\bigl[\bigl(\partial_\theta R( \theta^*)\bigr)^\top \bigl(\nabla_1 \hat\theta_{l_1,n}(X_{1:l_1})\bigr) \mid X_1\bigr]
  \\&\quad+ \EE\bigl[\bigl(\partial_\theta R(\tilde\theta) - \partial_\theta R(\theta^*)\bigr)^\top
        \bigl(\nabla_1 \hat\theta_{l_1,n}(X_{1:l_1}) \bigr) \mid X_1\bigr]
  \\&\oversetclap{(a)}{=} \bigl(\partial_\theta R(\theta^*)\bigr)^\top \EE\bigl[\nabla_1(\hat\theta_{l_1,n}(X_{1:l_1})) \mid X_1\bigr]+ o_p(\frac{1}{n}).
\end{meqn}
where to get (a) we used the $L_2$-consistency of $\hat\theta_{l_1,n}(X_{1:l_1})$ coupled with the fact that $n\EE\bigl[\nabla_1(\hat\theta_{l_1,n}) \mid X_1\bigr]=O(1)$.
Moreover by another application of the mean value theorem we know that there is $\tilde\theta \in  [\hat\theta_{l_1,n}, \hat\theta_{l_1,n}^1]$ such that the following holds:
 \begin{equation}\begin{split}
 & \EE\bigl[\nabla_1 \hat\theta_{l_1,n}(X_{1:l_1}) \mid X_1\bigr]
 \\&= \frac{1}{l_1}\EE\bigl[ \bigl(\frac{1}{l_1}\sum_{i\le l_1} \partial^2_\theta \Psi(X_i, \tilde\theta)\bigr)^{-1} 
        \bigl(\partial_\theta \Psi(X_1, \hat\theta_{l_1,n}) - \partial_\theta \Psi(X'_1, \hat\theta'_{l_1,n})\bigr) \mid X_1 \bigr]
  \\&\oversetclap{(a)}{=} \frac{1}{l_1} \bigl(\EE \partial^2_\theta \Psi(X_1, \theta^*)\bigr)^{-1}
        \EE\bigl[\partial_\theta \Psi(X_1, \theta^*) - \partial_\theta \Psi(X'_1, \theta^*) \mid X_1\bigr] + o_p(\frac{1}{n}).
\end{split}\end{equation}
where to obtain (a) we exploited once again the consistency of the estimator $\hat\theta_{l_1,n}(X_{1:l_1})$.
This implies that:
\begin{equation}
    \sigma_2^2 \eqdef G_R ^\top H^{-1} \Sigma H^{-1} G_R,
\end{equation}
where we have written:
\begin{equation}
G_R \eqdef \EE\bigl[\partial_\theta R(\theta^*)\bigr], \quad H \eqdef \EE\bigl[\partial^2_\theta \Psi(X_1, \theta^*)\bigr], \quad \Sigma \eqdef \Cov(\partial_\theta \Psi(X_1, \theta^*)).
\end{equation}
Therefore we get the desired result.
\end{proof}
\subsection{Proof of proposition~\ref{cabris}}
\label{sec:proof-of-cabris}
\begin{proof}
To prove the desired result we check that the conditions of Theorem 3 are respected. We denote
$\hat{\mu}_1(X) \eqdef \frac{2}{n}\sum_{i\le n}\mathbb{I}(Y_i=1)Z_i$ and
$\hat{\mu}_2(X) \eqdef \frac{2}{n}\sum_{i\le n}\mathbb{I}(Y_i=2)Z_i$ and
$\bar{Z} \eqdef \frac{1}{n}\sum_{i\le n}Z_i$.
Let $(\tilde Z,\tilde Y)$ be an independent copy of $(Z_1,Y_1)$.
Note that our classification rule is equivalent to classifying $\tilde X$ as belonging to the first class if:
\begin{equation}
\tilde Z \bigl(\hat{\mu}_1(X)-\hat{\mu}_2(X)\bigr) > \frac{1}{2}\bigl(\hat{\mu}_1(X)^2-\hat{\mu}_2(X)^2\bigr).
\end{equation}
Let us denote this classification region as:
\begin{equation}
\calR(X) \eqdef \Bigl\{z\in \RR: z\bigl(\hat{\mu}_1(X)-\hat{\mu}_2(X)\bigr) > \frac{1}{2}\bigl(\hat{\mu}_1(X)^2-\hat{\mu}_2(X)^2\bigr)\Bigr\}.
\end{equation}
We denote $X=o_p(Y)$ if $X/Y\xrightarrow{P}0$. If $\hat{\mu}_1(X)-\hat{\mu}_2(X)$ and  $\hat{\mu}_1(X^1)-\hat{\mu}_2(X^1)$ are both positive then the following holds:
\begin{equation}\label{eq:classification-stability}
\mathbb{I}(z\in \calR(X)) - \mathbb{I}(z\in \calR(X^1)) = \mathbb{I}(z> \bar{Z})-\mathbb{I}(z> \bar{Z}-\frac{1}{n}[Z_1-Z'_1]).
\end{equation} 
Let $M=\sup_x \Big(g_1(x),g_2(x)\Big)$.
We obtain that 
\begin{meqn}
&\norm[\Big]{\calL(\tilde X, f_n(X))-\calL(\tilde X, f_n(X^1))}_{L_1}
\\&\le  \PP\Bigl(\tilde Z \in \calR(X)\triangle \calR(X^1)\Bigr)
\\&\le  \PP\Bigl(\mathrm{sign}\bigl(\hat{\mu}_1(X)-\hat{\mu}_2(X)\bigr) =
    \mathrm{sign}(\hat{\mu}_1(X^1)-\hat{\mu}_2(X^1)), \tilde Z\in \calR(X) \triangle \calR(X^1)\Bigr)
\\& \quad+ \mathbb{P}\Bigl(\mathrm{sign}\bigl(\hat{\mu}_1(X)-\hat{\mu}_2(X)\bigr)\ne\mathrm{sign}\bigl(\hat{\mu}_1(X^1)-\hat{\mu}_2(X^1)\bigr)\Bigr)
\\&\le \sup_t \EE\Bigl[\int_{t}^{t+\frac{X_1-X'_1}{n}}\frac{g_1(x)+g_2(x)}{2} \, dx\Bigr]
\\&\quad+ \PP\Bigl(\mathrm{sign}\bigl(\hat{\mu}_1(X)-\hat{\mu}_2(X)\bigr)
                   \neq \mathrm{sign}\bigl(\hat{\mu}_1(X^1)-\hat{\mu}_2(X^1)\bigr)\Bigr)
\\&\le M\EE\Big[\frac{\abs{X_1-X'_1}}{n}\Bigr] 
+ \PP\Bigl(\mathrm{sign}\bigl(\hat{\mu}_1(X)-\hat{\mu}_2(X)\bigr) \neq \mathrm{sign}\bigl(\hat{\mu}_1(X^1)-\hat{\mu}_2(X^1)\bigr)\Big)
\\&\le\frac{2\mu M}{n}
+ \PP\bigl(\abs{\hat{\mu}_1(X)-\mu_1} \ge \abs{\mu_1-\mu_2} / 2\bigr)
    + \PP\bigl(\abs{\hat{\mu}_2(X)-\mu_2} \ge \abs{\mu_1-\mu_2} / 2\bigr)
\\&\oversetclap{(b)}{\le}\frac{2\mu M}{n}
+\frac{32\max_{i\in \{1,2\}} \EE_{g_i}\abs{Z}^3}{\abs{\mu_1-\mu_2}^3 n^{2}},
\end{meqn}
where (b) is due to Chebyshev inequality.
Therefore we have that:
\begin{equation}
n\norm[\Big]{\calL(\tilde Z, f_n(X)) - \calL(\tilde Z, f_n(X^1))}_{L_1}=O(1),
\end{equation}
which implies that $\epsilon_n(\Delta)=o(1)$ and $\calS(R)=O(1)$. 

We can note that conditionally on the signs of $\hat{\mu}_1(X)-\hat{\mu}_2(X)$ and  $\hat{\mu}_1(X^1)-\hat{\mu}_2(X^1)$, \cref{eq:classification-stability} does not depend on $X_2$.
Therefore we have that:
\begin{meqn}
&\norm[\Big]{\calL(\tilde Z, f_n(X))-\calL(\tilde Z, f_n(X^1)) -
    \bigl[\calL(\tilde Z, f_n(X^1))-\calL(\tilde Z, f_n(X^{1,2}))\bigr]}_{L_1}
\\&\le\PP\Bigl(\mathrm{sign}\bigl(\hat{\mu}_1(X)-\hat{\mu}_2(X)\bigr)\ne \mathrm{sign}\bigl(\hat{\mu}_1(X^1)-\hat{\mu}_2(X^1)\bigr)\Bigr)\\
&\quad+\PP\Bigl(\mathrm{sign}\bigl(\hat{\mu}_1(X^2)-\hat{\mu}_2(X^2)\bigr)\ne \mathrm{sign}\bigl(\hat{\mu}_1(X^{1,2})-\hat{\mu}_2(X^{1,2})\bigr)\Bigr)
\\&\le 
\frac{64}{\abs{\mu_1-\mu_2}^3 n^{2}} \max_{i\in \{1,2\}} \EE_{g_i}\abs{Z}^3,
\end{meqn}
which implies that $\epsilon_n(\nabla)=o(1)$, $\epsilon_n(d)=o(1)$ and $\epsilon_n(R)=o(1)$.
Finally note that that if $\hat{\mu}_1(Z)- \hat{\mu}_2(Z) $ and $\hat{\mu}_1(Z^1)- \hat{\mu}_2(Z^1) $ both have the same sign as $\mu_1-\mu_2$, then the following holds:
\begin{meqn}
&R(f_n(X))-R(f_n(X^1))
\\&=\frac{1}{2} \Big[\mathbb{P}\big(\tilde Z\in \mathcal{R}(Z) \mid Z,C_n(\tilde Z)=2\big)
    -\mathbb{P}\big(\tilde Z\in \mathcal{R}(Z^1) \mid Z^1, C_n(\tilde Z)=2\big)\Big]
\\&\quad+\frac{1}{2}\Big[ \mathbb{P}\big(\tilde Z\not\in \mathcal{R}(Z) \mid Z,C_n(\tilde Z)=1\big)
    -\mathbb{P}\big(\tilde Z\not\in \mathcal{R}(Z^1) \mid Z^1,C_n(\tilde Z)=1\big)\Big]
\\&=\frac{1}{2}\Big[F_1(\bar{Z})-F_1\big(\bar{Z}+\frac{Z_1-Z'_1}{ n}\big)-\big[F_2(\bar{Z})-F_2\big(\bar{Z}+\frac{Z_1-Z'_1}{ n}\big)\big]\Big]
\\&\oversetclap{(a)}{=}\frac{Z_1-Z'_1}{2 n}\Big[g_1(\bar{Z})-g_2(\bar{Z})\Big]+o_p(\frac{1}{n})
\\&\oversetclap{(b)}{=}\frac{Z_1-Z'_1}{2 n}\Big[g_1(\mu)-g_2(\mu)\Big]+o_p(\frac{1}{n}).
\end{meqn}
where (a) and (b) are consequence of a Taylor expansion.
Therefore for all $l_n\in  N_n$ we have:
\begin{equation}
l_n\EE\bigl[\beta_1^{l_n,n}(X) \mid Z_1\bigr] = \frac{Z_1-\EE(Z_1)}{2 }\Delta+o_p(1),
\end{equation}
where we have written $\Delta \eqdef g_1(\mu) - g_2(\mu)$.
This implies that  for all $l_n\in N_n$ we have:
\begin{meqn}
l^2_n\Var\Bigl[\EE\bigl[\beta_1^{l_n,n}(X) \mid Z_1 \bigr]\Bigr]
&= \frac{\Var(Z_1)}{4}\Delta^2+o(1) \\
&=\frac{\Delta^2}{8}\Bigl(\Var_{g_1}(Z) + \Var_{g_2}(Z) + \frac{1}{2}(\mu_1-\mu_2)^2\Bigr).
\end{meqn}
Moreover we can observe that $\norm[\big]{\bar\calL_{l_1,n}(X_1)-M(Z_1,Y_1)}_{L_2}\rightarrow 0$ where
\begin{equation}
M(z,y)=y\mathbb{I}\big(z\le \mu\big)+(1-y)\mathbb{I}\big(z>\mu\big).
\end{equation}
We remark that $M(Z_1,Y_1)$ is a Bernoulli random variable with parameter $q \eqdef F_1(\mu)+ 1-F_{2}(\mu)$.
This implies that for all $l_n\in N_n$ we have:
\begin{equation}
    \Var\bigl[\bar\calL_{l_2,n}(X_1)\bigr] = q(1-q) + o(1),
\end{equation}
From which we may deduce that, for all $l_1,l_2\in N_n$ we have
\begin{meqn}
&l_1\Cov\Bigl[\EE\bigl[\beta_1^{l_1,n}(X) \mid X_1\bigr], \bar\calL_{l_2,n}(X_1)\Bigr]
\\&=\frac{\Delta}{2}\Cov\Bigl[\EE\bigl[X_1, \bar\calL_{l_2,n}(X_1)\Bigr]+o(1)
\\&=\frac{\Delta}{4}\Big[\EE_{g_2}\Big(X\mathbb{I}(X>\mu)\Big) + \EE_{g_1}\Big( X\mathbb{I}(X\le\mu)\Big)-2\beta\mu\Big]+o(1).
\end{meqn}
\end{proof}

\subsection{Proof of proposition~\ref{luna}}
\label{sec:proof_nearest_neighbour}

\begin{proof}
Before diving into the proof, recall that $c(z, Z_{1:n}) = \argmin_{i \leq n} \abs{Z_i - z}$, and define for all subsets $B\subset \mathbb{N}$
\begin{align*}
    c^*_1(B) &\eqdef c(\frac{1}{2},Z_{B}), \\
    c^*_2(B) &\eqdef \begin{cases}
        \displaystyle\argmin_{i\in B \text{ s.t } Z_i\le \frac{1}{2}} |\frac{1}{2}-Z_i| &\text{if }Z_{c^*_1(B)}\ge \frac{1}{2},\\
        \displaystyle\argmin_{i\in B \text{ s.t } Z_i> \frac{1}{2}} |\frac{1}{2}-Z_i| &\text{if } Z_{c^*_1(B)}\le \frac{1}{2}.
    \end{cases}.
\end{align*}

A new observation $(\tilde Z, \tilde Y)$  will be misclassified if $Y_{c(\tilde Z, Z_{1:n})}$ is different from $\tilde Y$.
Therefore it is mislabeled if it falls in the following set:
\begin{equation}
 \mathcal{E}\Bigl(X_{c_1^*(\dbracket n),c_2^*(\dbracket n)}\Bigr) \eqdef \begin{cases}
    \bigl[\frac{1}{2}, \frac{1}{2}\bigl(Z_{c_1^*(\dbracket n)}+Z_{c_2^*(\dbracket n)}\bigr)\bigr] & \text{if } Z_{c_1^*(\dbracket n)}\le \frac{1}{2},\\
    \bigl[\frac{1}{2}\bigl(Z_{c_1^*(\dbracket n)}+Z_{c_2^*(\dbracket n)}\bigr),\frac{1}{2} \bigr] & \text{otherwise.}
    \end{cases}
\end{equation}

The key point is to note that the 2-fold cross-validated error can be rewritten in the following way:
\begin{meqn}
\sqrt{n} \hat{R}_{\rm{cv}}(X) &=
\abs[\Big]{\Bigl\{i\le \frac{n}{2} : Z_i\in \mathcal{E}\Big(X_{c_1^*({\lfloor\frac{n}{2}\rfloor+1:n}),c_2^*({\lfloor\frac{n}{2}\rfloor+1:n})}\Big) \Bigr\}}\\
&\quad+ \abs[\Big]{\Bigl\{i> \frac{n}{2} : X_i\in\mathcal{E}\Big(X_{{X_{c^*_1(\dbracket{\lfloor\frac{n}{2}\rfloor}) ,c^*_2(\dbracket{\lfloor\frac{n}{2}\rfloor})}}) }\Big) \Big\}}.
\end{meqn}

For a given pair of random variables $X'_1,X'_2$, we write:
\begin{equation}
    \lambda_{X'_{1:2}}\eqdef \frac{n}{2}P\bigl(\tilde X_1\in \mathcal{E}(X'_{1:2}) \mid X'_{1:2}\bigr),
\end{equation}
and shorthand
$B_1 = \{1, \dotsc, \lfloor n / 2 \rfloor \}$ and $B_2 = \{ \lfloor n / 2 \rfloor + 1, \dotsc, n \}$.
By definition, we have that:
\begin{equation}
\Big(\mathbb{I}\Big(Z_i\in\mathcal{E}(X_{c_1^*(B_2),c_2^*(B_2)})\Big)\Big)_{i \in B_1} \mid X_{c^*_1(B_2),c^*_2(B_2)}
\overset{i.i.d}{\sim}
\mathrm{Bernoulli}\Bigl(\lambda_{X_{c^*_1(B_2)},c^*_2(B_2)}\Bigr).
\end{equation}
Therefore using the classical Poisson limit theorem, we have that:
\begin{equation}
\abs[\Big]{\Bigl\{i \in B_1 : Z_i\in\mathcal{E}(X_{c^*(2)} ) \Bigr\}} \Bigm| X_{B_2} \xrightarrow{d} \mathrm{Poisson}\Bigl(\lambda_{X_{c^*_1(B_2)},c^*_2(B_2)}\Bigr),
\end{equation}
which leads to:
\begin{equation}
\sqrt{n}\hat{R}_{\mathrm{split}} \mid X_{B_2}\xrightarrow{d}\mathrm{Poisson}\Big(\lambda_{X_{c^*_1(B_2),c^*_2(B_2)}}\Big).
\end{equation}
Moreover exploiting the fact that $(X_i)\overset{i.i.d}{\sim}\rm{unif}([0,1])$ we can see that: 
\begin{equation*}
    \begin{split}
        \lambda_{X_{c^*_1(B_2),c^*_2(B_2)}}=\frac{n}{4}\Big|1-\Big[Z_{c_1^*(B_2)}+Z_{c_2^*(B_2)}\Big]\Big| \xrightarrow{d} \exp(1/2)
    \end{split}
\end{equation*}
Furthermore, by definition of the indexes $c^*_1(\cdot)$ and $c^*_2(\cdot)$, we remark that:
\begin{align}
Z_{B_1 \setminus\{ c_1^*(B_1),c_2^*(B_1)\}} \mid X_{c_1^*(B_1),c_2^*(B_1)} &\overset{i.i.d}{\sim}
\begin{cases}
    \mathrm{Unif}\bigl(\bigl[0,1\bigr]\setminus\big[Z_{c^*_1(B_1)},Z_{c^*_2(B_1)}\big]\bigr) &\text{if } Z_{c^*_1(B_1)}\le \frac{1}{2} \\
    \mathrm{Unif}\bigl(\bigl[0,1\bigr]\setminus\big[Z_{c^*_2(B_1)},Z_{c^*_1(B_1)}\big]\bigr) &\text{otherwise}.
\end{cases}, \\
Z_{B_2 \setminus\{ c_1^*(B_2),c_2^*(B_2)\}} \mid X_{c_1^*(B_2),c_2^*(B_2)} &\overset{i.i.d}{\sim}
\begin{cases}
    \mathrm{Unif}\bigl(\bigl[0,1\bigr]\setminus\big[Z_{c^*_1(B_2)},Z_{c^*_2(B_2)}\big]\bigr) &\text{if } Z_{c^*_1(B_2)}\le \frac{1}{2} \\
    \mathrm{Unif}\bigl(\bigl[0,1\bigr]\setminus\big[Z_{c^*_2(B_2)},Z_{c^*_1(B_2}\big]\bigr) &\text{otherwise.}
\end{cases}.
\end{align}
Therefore the misclassified observations will be those falling in:
\begin{meqn}
\calM &= \Bigl(\calE(X_{c_1^*(B_1),c_2^*(B_1)}) \bigcap \big[X_{c_1^*(B_2)},X_{c_2^*(B_2)}\big]^c\Bigr) \\
&\quad\bigcup\Bigl(\calE(X_{c_1^*(B_2),c_2^*(B_2)}) \bigcap \big[X_{c_1^*(B_1)},X_{c_2^*(B_1)}\big]^c\Bigr).
\end{meqn}

We denote:
\begin{align*}
    s_1 &\eqdef Y_{c_1^*(B_1)}-(1-Y_{c_1^*(B_1)}), \\
    s_2 &\eqdef Y_{c_2^*(B_2)}-(1-Y_{c_2^*(B_2)}),
\end{align*}
and note that if $s_1=-1$ we have $Z_{c_1^*(B_1)} \le 0.5$.
We now have that:
\begin{meqn}
    \abs{\calM}
    &=\mathbb{I}(s_1 s_2=1)\Bigl[\Bigl(\frac{s_1}{2}Z_{c_2^*(B_1)} + \frac{s_1}{2} Z_{c_1^*(B_1)} - s_1 Z_{c_2^*(B_2)} \Bigr)_+
    \\ &\phantom{\mathbb{I}(s_1 s_2 =1)}
        \qquad+\Bigl(\frac{s_2}{2}Z_{c_2^*(B_2)} + \frac{s_2}{2} Z_{c_1^*(B_2)} - s_2 Z_{c_2^*(B_1)} \Bigr)_+\Bigr]
    \\&\quad+ \mathbb{I}(s_1s_2\ne1)
        \Bigl[\Bigl(\frac{s_1}{2}Z_{c_2^*(B_1)} + \frac{s_1}{2}Z_{c_1^*(B_1)} + s_2 Z_{c_1^*(B_2)} \Bigr)_{+}
    \\&\phantom{\mathbb{I}(s_1 s_2\neq 1)}
    \qquad+\Bigl(\frac{s_2}{2} Z_{c_2^*(B_2)} + \frac{s_2}{2}Z_{c_1^*(B_2)} + s_1 Z_{c_2^*(B_1)} \Bigr)_+\Bigr].
\end{meqn}
We note that $s_1,s_2\overset{i.i.d}{\sim}\mathrm{Unif}(\{-1,1\}),$ and
writing $U_1,U_2\overset{\rm{i.i.d}}{\sim} \mathrm{Exp}(1)$, and $N_1,N_2\overset{\rm{i.i.d}}{\sim} \mathrm{Exp}(\frac{1}{2})$, then we have:
\begin{equation*}\begin{split}&
    \sqrt{n}\hat{R}_{cv}\xrightarrow{d} \mathbb{I}(s_1s_2=-1)\Bigl[\mathbb{I}(r^d_1\ge 0)\bigl[1+\mathrm{Poisson}(r^d_1)\bigr]+\mathbb{I}(r^d_2\ge 0)\bigl[1+\mathrm{Poisson}(r^d_2)\bigr] \Bigr]
    \\&\qquad \qquad+\mathbb{I}(s_1s_2=1)\Bigl[\mathbb{I}(r^s_1\ge 0)\bigl[1+\mathrm{Poisson}(r^s_1)\bigr]+\mathbb{I}(r^s_2\ge 0)\bigl[1+\mathrm{Poisson}(r^s_2)\bigr] \Bigr]
    \\&   \sqrt{n}\hat{R}_{\rm{split}}\xrightarrow{d} \mathrm{Poisson}(N_1)
\end{split}\end{equation*}
 where we have
 \begin{gather*}
    r^d_1 \eqdef s_1 N_1 + s_2 U_2, \quad
    r^d_2 \eqdef s_2 N_2 + s_1 U_1, \\
    r^s_1 \eqdef r^d_1 - 2 s_2 N_2, \quad
    r^s_2 \eqdef r^d_2 - 2 s_1 N_1.
 \end{gather*}
\end{proof}

\section{Acknowledgements}
We would like to deeply thank Peter Orbanz for helpful discussions and remarks; as well as for providing a supportive collaboration environment.

\printbibliography[heading=bibintoc]

\end{document}